\newtheorem{main}{Theorem}
\newtheorem{main_cor}[main]{Corollary}
\newtheorem{theorem}{Theorem}[section]
\newtheorem{proposition}[theorem]{Proposition}
\newtheorem{lemma}[theorem]{Lemma}
\newtheorem{corollary}[theorem]{Corollary}
\newtheorem{question}{Question}
\theoremstyle{definition}
\def\A{\mathbb{A} }
\def\R{\mathbb{R} }
\def\Z{\mathbb{Z} }
\def\nbd{neighborhood }
\def\nbds{neighborhoods }
\def\Sv{\mathop{\mathrm{Sing}}(v)}
\def\Pv{\mathop{\mathrm{Per}}(v)}
\def\Cv{\mathop{\mathrm{Cl}}(v)}
\begin{document}

\begin{center}
\textbf{Vladislav Kibkalo$^1$, Tomoo Yokoyama$^{2}$}
\end{center}

\begin{center}
\textbf{Topological characterizations of Morse-Smale flows on surfaces and generic non-Morse-Smale flows}
\end{center}

\begin{center}
37D15,37B35,37E35,37C75,37B25
\end{center}

\begin{center}
$^1$ Lomonosov Moscow State University, Faculty of Mechanics and Mathematics, Russia,  \\ 
slava.kibkalo@gmail.com   \\
$^2$ Applied Mathematics and Physics Division, Gifu University, Yanagido 1-1, Gifu, 501-1193, Japan\\
tomoo@gifu-u.ac.jp

\end{center}


\begin{abstract}
It is known that $C^r$ Morse-Smale vector fields form an open dense subset in the space of vector fields on orientable closed surfaces and are structurally stable for any $r \in \mathbb{Z}_{>0}$. In particular, $C^r$ Morse vector fields (i.e. Morse-Smale vector fields without limit cycles) form an open dense subset in the space of $C^r$ gradient vector fields on orientable closed surfaces and are structurally stable. Therefore generic time evaluations of gradient flows on orientable closed surfaces (e.g. solutions of differential equations) are described by alternating sequences of Morse flows and instantaneous non-Morse gradient flows. To illustrate the generic transitions, we characterize and list all generic non-Morse gradient flows. To construct such characterizations, we characterize isolated singular points of gradient flows on surfaces. In fact,  such a singular point is a non-trivial finitely sectored singular point without elliptic sectors.   Moreover, considering Morse-Smale flows as ``generic gradient flows with limit cycles'', we characterize and list all generic non-Morse-Smale flows.
\end{abstract}


\textbf{Keywords:} Morse-Smale flows, transitions, complete invariant.

\section{Introduction}
\subsection{Genericity and stability of Morse-Smale flows on surfaces}
By a work of Andronov-Pontryagin~\cite{andronov1937rough} and a work of Peixoto~\cite{peixoto1962structural}, it is known that the set $\Sigma^r(S)$ of Morse-Smale $C^r$-vector fields ($r \geq 1$) on an orientable closed surface $S$ is open dense in the set $\chi^r(S)$ of $C^r$-vector fields on $S$ and that Morse-Smale $C^r$-vector fields are structurally stable in  $\chi^r(S)$.
Combining Pugh's $C^1$-Closing Lemma~\cites{pugh1967improved,pugh1983c}, the subset $\Sigma^1$ for a non-orientable closed surface $M$ is dense in $\chi^1(M)$.
When the non-orientable genus of a non-orientable closed surface $M$ is less than $5$, the subset $\Sigma^r(M)$ is open dense in $\chi^r(M)$, and Morse-Smale $C^r$-vector fields are structurally stable in $\chi^r(M)$~\cites{gutierrez1978structural,markley1970number}.
Moreover, Smale \cite{smale1961gradient} proved that any Morse flow (i.e. Morse-Smale flows without limit cycles) on a closed manifold is a gradient flows without separatrices from a saddle to a saddle.
By this fact, since the set of Morse vector fields is open dense in the set of gradient vector fields, we characterize a ``generic'' non-Morse gradient flow on a compact surface to describe a generic time evaluation of gradient flows on orientable compact surfaces (e.g. solutions of differential equations) which is an alternating sequence of Morse flows and instantaneous non-Morse gradient flows.

\subsection{Characterization of isolated singular points in gradient flows on surfaces}
Cobo, Gutierrez, and Llibre proved that the singular points of a non-wandering flow with finitely many singular points on a compact surface are either centers or multi-saddles (see \cite{cobo2010flows}*{Theorem~3}).
Conversely, a quasi-regular flow on a compact surface has finitely many singular points.
On the other hand, we consider the following analogous question for hyperbolic dynamics.
\begin{question}
What kinds of isolated singular points do appear in gradient flows on manifolds?
\end{question}

In the surface case, we characterize isolated singular points in gradient flows on surfaces as follows.

\begin{main}\label{main:a}
An isolated singular point of a gradient flow on a surface is a non-trivial finitely sectored singular point without elliptic sectors.
\end{main}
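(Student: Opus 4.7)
The plan is to exploit the strict Lyapunov property of the potential: if $v = -\nabla f$ generates the flow, then along every non-constant orbit $\phi_t$ one has $\tfrac{d}{dt}(f \circ \phi_t) = -\|\nabla f\|^2 < 0$. This immediately forbids periodic orbits and any nontrivial recurrence in a neighborhood of the singular point $p$. Accordingly, I would first choose a small closed disk $D$ about $p$ in a local chart with $p$ the unique zero of $v$ in $\overline{D}$ and with $\partial D$ generic, so that $v$ is tangent to $\partial D$ at only finitely many points. Every orbit in $D \setminus \{p\}$ then either exits $D$ in forward or backward time, or converges to $p$.

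The exclusion of elliptic sectors is essentially immediate. If $\gamma$ lay in an elliptic sector, then $\alpha(\gamma) = \omega(\gamma) = \{p\}$, so the strictly decreasing function $t \mapsto f(\gamma(t))$ would have supremum and infimum both equal to $f(p)$, hence be constant, contradicting non-singularity of $\gamma$. The same observation rules out $p$ being a center or, more generally, monodromic: no nearby orbit can wind around $p$ while keeping $f$ strictly monotone. Hence $p$ is \emph{non-trivial} in the sense of admitting a genuine sector decomposition rather than a monodromic neighborhood, and only hyperbolic and parabolic sectors can occur at $p$.

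For finite sectoredness, I would use the finitely many tangency points of $v$ along $\partial D$ to partition $\partial D$ into an alternating family of inward and outward arcs. Each sector boundary is a separatrix, i.e.\ an orbit in $D$ forward- or backward-asymptotic to $p$. The absence of nontrivial recurrence, together with the finite number of boundary arcs and the strict monotonicity of $f$ along orbits, implies that the separatrices through $p$ form a discrete set, hence by compactness of $\partial D$ a finite one. Consequently, $D \setminus \{p\}$ is realized as a finite union of hyperbolic and parabolic sectors bounded by these separatrices and by arcs of $\partial D$.

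The main obstacle is making the finiteness of the separatrix set fully rigorous in the non-analytic setting. For analytic gradient flows this is a classical Bendixson-type statement. For $C^r$ gradient flows the isolated zero of $\nabla f$, the surface topology, and the strong Lyapunov structure should still force finiteness, but a rigorous argument likely requires either a careful local analysis of level sets of $f$ near $p$ or an appeal to existing topological classifications of isolated singular points of surface flows without recurrence; this is the step where I expect most of the technical work to concentrate.
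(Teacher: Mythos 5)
Your overall strategy coincides with the paper's: you take a small disk around the singular point whose boundary is transverse to the flow except at finitely many tangencies (the paper's Lemma~3.1 does the work of making this ``generic boundary'' precise via flow boxes), you use the Lyapunov/gradient property to show every local orbit arc either crosses the disk or converges to $p$ with $\alpha\neq\omega$, and you kill elliptic sectors and centers exactly as the paper does. That part is fine. But the step you yourself flag as the obstacle --- finiteness of the sector decomposition --- is precisely where the paper's proof has its real content, and your proposal does not close it. Two concrete problems: first, the set of boundary points whose orbits converge to $p$ inside $D$ is a union of closed \emph{intervals} (each parabolic sector contributes a whole interval of converging orbits), not a discrete set, so ``the separatrices form a discrete set, hence finite by compactness'' is not the right statement even in outline; the object one must count is the set of connected components of the \emph{complement} of that converging set in $\partial D$. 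Second, even for that corrected statement, discreteness/finiteness does not follow from ``absence of recurrence plus finitely many boundary arcs'' without an additional argument: a priori the converging set could be a Cantor-like closed set with infinitely many complementary gaps inside a single inward arc.

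The paper's Lemma~3.2 closes this gap with a tangency-counting argument that is the heart of the proof: letting $I\subset\gamma=\partial B$ be the (closed) set of points on orbits converging to $x$ inside $B$, one shows $\gamma- I$ is open (flow box theorem), and then that \emph{every} connected component $\mu$ of $\gamma- I$ must contain at least one (in fact exactly one) tangency --- because if $\mu$ were a transverse arc, the region it bounds together with the two adjacent converging arcs would trap the forward (or backward) orbits of points of $\mu$, forcing them to converge to $x$ and hence to lie in $I$, a contradiction; and two successive tangencies in one component lead to a disconnection of an interval into two nonempty open sets. Since there are only finitely many tangencies, $\gamma-I$ has finitely many components, each yielding one hyperbolic sector, and $I$ then has finitely many components, each yielding a parabolic sector. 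Without this argument (or an equivalent appeal to an analytic or classification result, which you mention only as a possibility), the proposal establishes the exclusion of elliptic sectors but not the finitely sectored property, so it does not yet prove the theorem.
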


The previous theorem is a key observation to characterize generic non-Morse gradient flows on compact surfaces.


\subsection{Topological characterizations of gradient flows, Morse flows, and Morse flows on surfaces}
To characterize generic non-Morse gradient flows on surfaces, we topologically characterize gradient flows with finitely many singular points on compact surfaces and recall some concepts.

The restriction $v|_A$ for a subset $A$ is a sector for a singular point $x$ if there are a non-degenerate interval $I \subseteq [0, 2 \pi )$ and a homeomorphism $h \colon \{ x\} \sqcup A \to \{ 0 \} \sqcup \{ ( r \cos \theta, r \sin \theta ) \in \R^2 \mid r \in (0,1), \theta \in I \}$ such that $h^{-1}(0) = x$.
A parabolic sector is topologically equivalent to a flow box with the point $(\pm \infty, 0)$ and a hyperbolic (resp. elliptic) sector is topologically equivalent to a Reeb component (resp. the interior of a Reeb component) with the point $(\infty, 0)$ (resp. $(- \infty, 0)$) as in Figure~\ref{sectors_all}.
A separatrix is a non-singular orbit from or to a singular point.
A separatrix is a hyperbolic (resp. elliptic, parabolic) border separatrix if it is contained in the boundary of a hyperbolic sector (resp. a maximal open elliptic sector, a maximal open parabolic sector).
A singular point is finitely sectored if either it is a center or there is its open neighborhood which is an open disk and is a finite union of the point, parabolic sectors, hyperbolic sectors, and elliptic sectors such that a pair of distinct sectors intersects at most two orbit arcs.
A flow $v$ on a surface $S$ is of weakly finite type if each recurrent point is closed, there are at most finitely many limit cycles, and each singular point is finitely sectored.
A flow $v$ of weakly finite type is Morse-Smale-like if the set of non-recurrent points is open dense in $S$.
%
Then we have the following characterization of gradient flow with finitely many singular points on a compact surface.
\begin{figure}
\begin{center}
\includegraphics[scale=0.2]{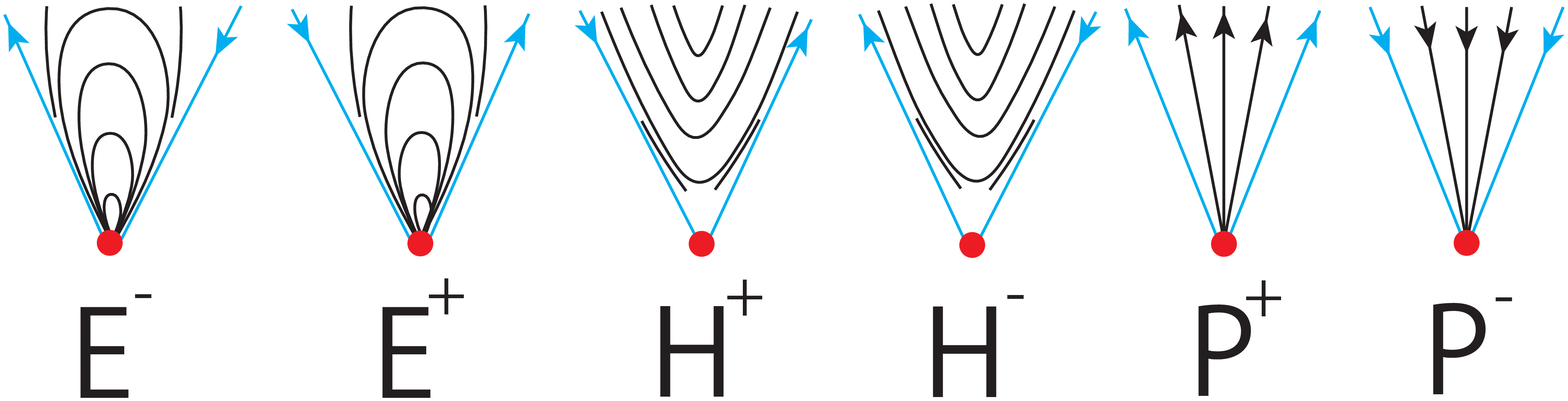}
\end{center}
\caption{Two parabolic sectors $P^-$ and $P^+$, two hyperbolic sectors $H^-$ and $H^+$ with clockwise and anti-clockwise orbit directions, and two elliptic sectors $E^+$ and $E^-$ with clockwise and anti-clockwise orbit directions respectively.}
\label{sectors_all}
\end{figure}

\begin{main}\label{main:c}
The following are equivalent for a flow on a compact surface:
\\
$(1)$ The flow is a gradient flow with finitely many singular points.
\\
$(2)$ The flow is Morse-Smale-like without elliptic sectors or non-trivial circuits.
\end{main}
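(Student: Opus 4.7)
The plan is to prove the two implications separately. The forward direction $(1) \Rightarrow (2)$ is essentially a consequence of the strictly decreasing potential plus Theorem~A. The backward direction $(2) \Rightarrow (1)$ requires constructing a smooth strict Lyapunov function compatible with the sector structure and then realizing it as a gradient via a suitable metric, and is where the real work lies.

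For $(1) \Rightarrow (2)$, suppose $v = -\nabla_g f$ is a gradient flow with finitely many singular points. Then $f$ strictly decreases along every non-singular orbit. This immediately gives that no non-singular point is recurrent, that there are no limit cycles, that every recurrent point is singular and hence closed, and that the non-recurrent set is the open dense complement of the finite set $\Sv$. By Theorem~A each singular point is non-trivial finitely sectored without elliptic sectors, so the flow is Morse-Smale-like without elliptic sectors. Finally, a non-trivial circuit would be a closed concatenation of orbits along which $f$ strictly decreases, forcing $f$ to return to its initial value, a contradiction.

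For $(2) \Rightarrow (1)$, the goal is to produce a smooth strict Lyapunov function $f \colon S \to \R$; a Riemannian metric realizing $v = -\nabla_g f$ is then obtained by declaring orbits orthogonal to level sets of $f$ and rescaling pointwise. The Morse-Smale-like hypothesis together with the absence of non-trivial circuits (which, interpreted to include limit cycles as the simplest kind of circuit, rules them out) forces $\Sv$ to be finite and equal to the recurrent set, and makes the directed separatrix graph on $\Sv$ acyclic. A topological sort of this graph then assigns to each singular point a distinct real value respecting the partial order. Locally near each singular point I would use the sector decomposition given by Theorem~A: on each hyperbolic sector a saddle-type model such as $f = xy$ in suitable flow-box coordinates supplies a strict Lyapunov function, on each parabolic sector $f$ is taken as a monotone coordinate transverse to the flow lines composed with a monotone function along them, and the absence of elliptic sectors removes the only obstruction to consistent local monotonicity. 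Along non-singular orbits, flow-box charts give local $f$ with $df(v) < 0$, and a partition of unity subordinate to a fine cover, with the local summands pre-adjusted to respect the global scale fixed by the topological sort, glues the pieces into a smooth $f$.

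The main obstacle is the gluing step: partition-of-unity sums do not in general preserve strict monotonicity unless the local summands are already consistent with a common monotone scale. The absence of non-trivial circuits is precisely what permits such a scale to exist, but one still has to verify that the smoothed $f$ retains $df(v) < 0$ in a full punctured neighborhood of each singular point and across the boundary separatrices between adjacent sectors. Here the explicit sector description from Theorem~A, together with the exclusion of elliptic sectors—which would force $f$ to oscillate along an orbit entering and leaving the same singular point—is the essential technical input that makes the local-to-global construction go through.
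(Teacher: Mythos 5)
Your forward direction $(1)\Rightarrow(2)$ is correct and coincides with the paper's argument: the strict decrease of the potential rules out recurrence, cycles, and non-trivial circuits, and Theorem~\ref{main:a} supplies the sector structure of the singular points. The gap is in the converse. Your high-level strategy (acyclic connection graph, topological sort, local Lyapunov functions, then a metric making $v$ orthogonal to the level sets) is reasonable, but the step you yourself flag as the ``main obstacle'' is exactly where the proof is missing, and you do not close it. A partition of unity applied to local Lyapunov functions $f_i$ produces $\sum_i \rho_i f_i$, whose derivative along $v$ contains terms of the form $(f_i - f_j)\,d\rho_i(v)$, which can dominate the negative terms and destroy $df(v)<0$; asserting that the summands are ``pre-adjusted to respect the global scale'' is not an argument, since the adjustments must be made consistent simultaneously on all overlaps of an arbitrary fine cover, and that consistency is the entire difficulty. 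The paper avoids this by invoking the structure theorem \cite{yokoyama2017decompositions}*{Theorem~7.7}: the complement of the finite directed graph $\mathop{\mathrm{BD}}_+(v)$ is a finite disjoint union of open invariant flow boxes (or an open transverse annulus, which forces $S$ to be a sphere with exactly one source and one sink and is treated separately). A height function is first defined on the finite acyclic graph $\mathop{\mathrm{BD}}_+(v)$ and then extended monotonically over each invariant flow box one at a time, so no partition of unity and no consistency problem ever arises. To complete your proof you would need either to import this finite decomposition or to supply a genuine consistency argument for your cover.

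Two smaller points. First, $f=xy$ is a first integral, not a Lyapunov function, for the standard saddle $\dot x = x$, $\dot y = -y$ (its derivative along the flow vanishes identically); the local model you want is $f = y^2 - x^2$ or similar. Second, you should note explicitly that centers are excluded: a finitely sectored singular point may a priori be a center, but a center is surrounded by periodic orbits, which are cycles and hence non-trivial circuits, so hypothesis $(2)$ rules them out; this is needed before one can assert that every singular point decomposes into hyperbolic and parabolic sectors.
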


A flow is quasi-regular if it is topological equivalent to a flow with multi-saddles as in Figure~\ref{multi-saddles}, centers, ($\partial$-)sink, and ($\partial$-)sources as in Figure~\ref{non-deg-sing}.
Notice that quasi-regularity implies that each singular point is a finitely sectored singular point without elliptic sectors.
The quasi-regularity implies the following statement.
\begin{figure}
\begin{center}
\includegraphics[scale=0.55]{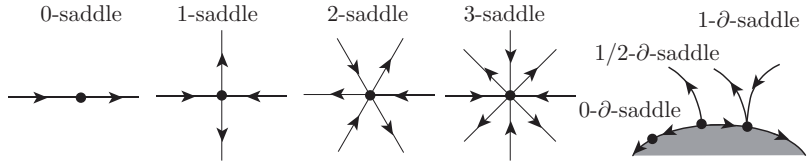}
\end{center}
\caption{Multi-saddles.}
\label{multi-saddles}
\end{figure}

\begin{figure}
\begin{center}
\includegraphics[scale=0.35]{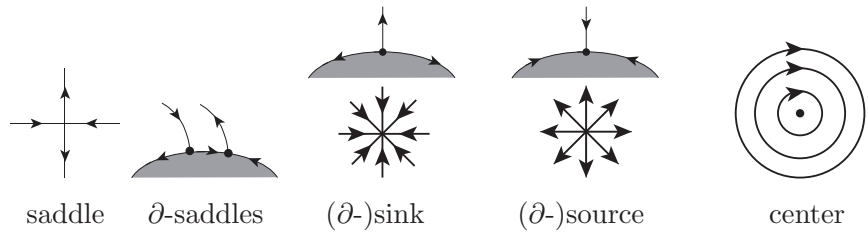}
\end{center}
\caption{A saddle, two $\partial$-saddles, a sink, a $\partial$-sink, a source, a $\partial$-source, and a center}
\label{non-deg-sing}
\end{figure}

\begin{main_cor}\label{main_cor}
The following are equivalent for a quasi-regular flow on a compact surface:
\\
$(1)$ The flow is gradient.
\\
$(2)$ The flow is Morse-Smale-like without non-trivial circuits.
\\
$(3)$ The flow is a flow without non-trivial circuits such that any recurrent points are closed and that the set of non-recurrent points are open dense.
\end{main_cor}

The non-existence of non-trivial circuits is necessary. In fact, there is a non-gradient Morse-Smale-like flow without limit circuits on a Klein bottle (see Figure~\ref{nonori_ex}).
\begin{figure}
\begin{center}
\includegraphics[scale=0.3]{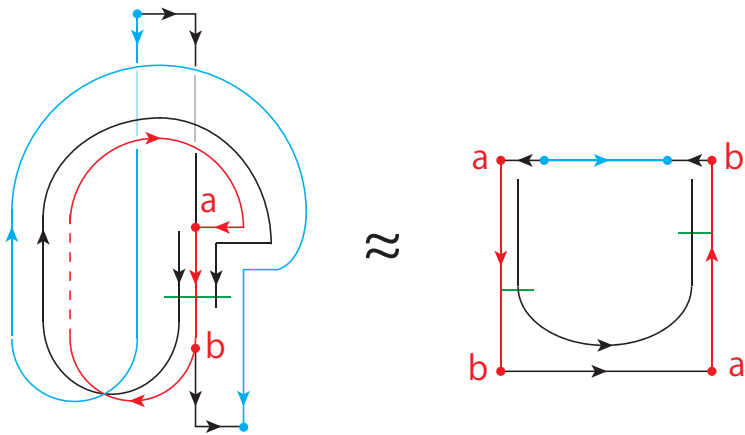}
\end{center}
\caption{The resulting flow on the double of the M\"obius band is a non-gradient Morse-like flow without limit circuits on a Klein bottle}
\label{nonori_ex}
\end{figure}

A flow is regular (or has non-degenerate singular points) if it is topological equivalent to a flow with non-degenerate singular points (i.e ($\partial$-)saddles, ($\partial$-)sink, ($\partial$-)sources), and centers as in Figure~\ref{non-deg-sing}.
A separatrix is heteroclinic if it connects distinct singular points.
A Morse-Smale-like flow $v$ on a surface $S$ is Morse-like if there are no limit cycles.
We have the following characterization of Morse flows on compact surfaces.

\begin{main}\label{main:Morse}
The following are equivalent for a flow $v$ on a compact surface $S$:
\\
$(1)$ The flow $v$ is Morse.
\\
$(2)$ The flow $v$ is a regular Morse-like flow such that each multi-saddle separatrix is contained in the boundary $\partial S$.
\\
$(3)$ The flow $v$ is a regular Morse-Smale-like flow without nontrivial circuits such that each heteroclinic multi-saddle separatrix is contained in the boundary $\partial S$.
\end{main}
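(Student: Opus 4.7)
The plan is to prove Theorem \ref{main:Morse} by the cyclic implications $(1) \Rightarrow (2) \Rightarrow (3) \Rightarrow (1)$, using Theorem \ref{main:c} (which characterizes gradient flows with finitely many singular points) together with Smale's theorem \cite{smale1961gradient} (which identifies Morse flows among gradient flows as precisely those without saddle-to-saddle separatrices) as the two main inputs.

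For $(1) \Rightarrow (2)$, the hyperbolicity of the singular points of a Morse-Smale flow immediately gives regularity, while the absence of limit cycles (built into the definition of Morse) upgrades the Morse-Smale-likeness coming from Theorem \ref{main:c} to Morse-likeness. Smale's theorem supplies the gradient structure and rules out any separatrix joining two saddles in the interior of $S$. Combined with a local analysis at $\partial$-saddles, whose boundary-facing separatrices are forced into the invariant boundary by the local pictures in Figure \ref{non-deg-sing}, this shows that every multi-saddle separatrix actually lies in $\partial S$, completing (2).

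The step $(2) \Rightarrow (3)$ is a routine weakening. A Morse-like flow is by definition a Morse-Smale-like flow without limit cycles, so the ``trivial circuit'' coming from a closed orbit is automatically absent. The hypothesis that \emph{every} multi-saddle separatrix lies in $\partial S$ is strictly stronger than the same statement restricted to heteroclinic ones. Finally, a non-trivial circuit would consist of a chain of multi-saddle separatrices entirely confined to $\partial S$; since $\partial S$ is a $1$-manifold and the flow on it is again Morse-Smale-like, no such chain can close up into a non-trivial circuit without producing forbidden recurrence.

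For $(3) \Rightarrow (1)$, regularity removes elliptic sectors, and together with the no-non-trivial-circuit hypothesis the conditions of Theorem \ref{main:c} are satisfied, so the flow is a gradient flow with finitely many singular points; in particular, there are no limit cycles. The heteroclinic hypothesis asserts that no saddle-to-saddle separatrix exists outside $\partial S$, so in the interior there are no saddle connections at all. By the converse direction of Smale's theorem, the flow is therefore Morse. The hard part throughout is the boundary bookkeeping: one must check carefully at each $\partial$-saddle which separatrices run along $\partial S$ and which enter the interior, and verify that the local pictures recorded in Figures \ref{non-deg-sing} and \ref{multi-saddles} are compatible with the hypothesis being assumed. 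Once this case analysis is clean, the three implications follow without further effort.
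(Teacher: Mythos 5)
Your proof is correct and takes essentially the same route as the paper: Theorem~\ref{main:c} converts the gradient property into ``regular Morse-Smale-like without elliptic sectors or non-trivial circuits,'' and the remaining equivalence --- a regular gradient flow with finitely many singular points is Morse iff its multi-saddle separatrices lie in $\partial S$ --- is the paper's Lemma~\ref{Morse} (resting on Lemma~\ref{MS_char}), which your appeal to Smale's theorem and its converse replaces. Two small imprecisions to repair: in $(2)\Rightarrow(3)$ the reason no directed circuit can close up inside $\partial S$ is not ``forbidden recurrence'' but that a regular $\partial$-saddle has both of its boundary separatrices of the same type (both stable or both unstable), so a directed path cannot pass through it along $\partial S$ --- this is exactly where excluding $0$-$\partial$-saddles (fake saddles) matters; and in $(3)\Rightarrow(1)$ you should say explicitly that the gradient property obtained from Theorem~\ref{main:c} forces every multi-saddle separatrix to be heteroclinic (gradient flows have no homoclinic separatrices), so the hypothesis really does place them all in $\partial S$, and that the ``converse of Smale'' you need is the boundary version encoded in conditions $(\mathrm{MS}1)$--$(\mathrm{MS}3)$, i.e.\ the paper's Lemma~\ref{MS_char}, not the closed-manifold statement.
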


A separatrix is a multi-saddle separatrix if it connects multi-saddles.
Considering Morse-Smale flows to be ``generic gradient flows with limit cycles'', called quasi-Morse-Smale flow, we characterize Morse-Smale flows as follows.

\begin{main}\label{main:d}
The following are equivalent for a flow on a compact surface $S$:
\\
$(1)$ The flow is Morse-Smale.
\\
$(2)$ The flow $v$ is a regular Morse-Smale-like flow such that each limit cycle is topologically hyperbolic and that each multi-saddle separatrix is contained in the boundary $\partial S$.
\\
$(3)$ The flow is a regular quasi-Morse-Smale flow such that each limit cycle is topologically hyperbolic and that each multi-saddle separatrix is contained in the boundary $\partial S$.
\\
$(4)$ The closed point set consists of finitely many topologically hyperbolic orbits, any recurrent points are closed, and each multi-saddle separatrix is contained in the boundary $\partial S$.
\end{main}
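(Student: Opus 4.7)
The plan is to prove the cyclic chain of implications $(1) \Rightarrow (2) \Rightarrow (3) \Rightarrow (4) \Rightarrow (1)$, mimicking the structure of Theorem~\ref{main:Morse} but now tracking the contribution of the finitely many limit cycles.

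The implication $(1) \Rightarrow (2)$ unpacks the classical definition of Morse-Smale: non-degeneracy of singular points gives regularity, and the non-wandering set being a finite union of hyperbolic singularities and hyperbolic limit cycles forces recurrent $=$ closed plus openness and density of the non-recurrent set, giving the Morse-Smale-like property. Each non-degenerate singularity is finitely sectored without elliptic sectors, smooth hyperbolicity of a limit cycle implies topological hyperbolicity, and the Kupka-Smale transversality condition forbids interior saddle-to-saddle connections. The implication $(2) \Rightarrow (3)$ upgrades ``Morse-Smale-like'' to ``quasi-Morse-Smale'': regularity supplies the standard local models of Figure~\ref{non-deg-sing} at each singularity and topological hyperbolicity supplies standard attracting/repelling annular neighborhoods at each limit cycle, so the flow matches the quasi-Morse-Smale building blocks locally. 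The implication $(3) \Rightarrow (4)$ is essentially bookkeeping: by construction a quasi-Morse-Smale flow has finitely many closed orbits of the standard form, the Morse-Smale-like condition baked into it yields recurrent $=$ closed, topological hyperbolicity is part of the hypothesis, and the separatrix condition transfers verbatim.

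The main technical step is $(4) \Rightarrow (1)$. Given only that the closed-point set is a finite union of topologically hyperbolic orbits, that recurrent points are closed, and that multi-saddle separatrices lie in $\partial S$, we must reconstruct the full Morse-Smale picture. I would first excise tubular annular neighborhoods of each topologically hyperbolic limit cycle; the complement $S'$ is a compact surface on which the restricted flow still has recurrent $=$ closed and whose closed-point set is just the remaining topologically hyperbolic singularities. Apply Theorem~\ref{main:a} to identify these singularities as non-trivial finitely sectored points without elliptic sectors. The boundary condition on multi-saddle separatrices, combined with the observation that every interior multi-saddle of order $\geq 2$ would contribute at least one heteroclinic separatrix emerging into the interior, rules out such multi-saddles and forces interior singularities to be simple saddles, sources, or sinks, so that $S'$ is regular. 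Theorem~\ref{main:Morse} applied componentwise to $S'$ then yields a Morse flow on $S'$, which glues with the topologically hyperbolic limit cycles to a Morse-Smale flow on $S$.

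The hardest part will be the gluing step and the verification that no non-transverse saddle-to-cycle or cycle-to-cycle behaviour survives: one must show that wandering orbits between limit cycles and saddles align with the stable and unstable manifolds in the topologically hyperbolic sense, so that the reconstructed global flow is genuinely Morse-Smale rather than merely ``quasi''. I expect this to require a careful limit-set analysis, exploiting density of non-recurrent points together with topological hyperbolicity to show that the $\omega$- and $\alpha$-limits of every non-recurrent orbit are single closed orbits and that the incidence relations between them are transverse.
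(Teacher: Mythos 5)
Your overall strategy---reduce to the Morse case by surgery along the limit cycles and then invoke Theorems~\ref{main:a} and~\ref{main:Morse}---is the same idea the paper uses: its proof (Theorem~\ref{MS}) cuts along the non-trivial circuits with non-trivial holonomy, takes the metric completion, collapses the new boundary circles to points to obtain $v_{\mathrm{col}}$, and shows that $v$ is Morse-Smale exactly when $v_{\mathrm{col}}$ is Morse. However, two steps of your proposal have genuine gaps. First, your $(2)\Rightarrow(3)$ is not a matter of local models: by definition a flow is quasi-Morse-Smale when the \emph{collapsed} flow $v_{\mathrm{col}}$ is a gradient flow, which is a global condition (no non-trivial recurrence or circuits in $v_{\mathrm{col}}$, existence of a global height function), not a statement about local building blocks. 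The actual content of this implication is Theorem~\ref{main:qausi-MS} (Proposition~\ref{lem:qms_char}), whose proof needs the holonomy dichotomy for circuits (Lemma~\ref{lem:dich}), the inheritance of the finitely sectored property under collapse (Lemma~\ref{lem:fin_sec}), and the gradient characterization Theorem~\ref{main:c}; none of this is supplied by observing standard local pictures.

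Second, in $(4)\Rightarrow(1)$ your excision of open annular neighborhoods of the limit cycles leaves new boundary circles that are transverse to the flow, so the restriction to $S'$ is not a flow on a compact surface in the sense required by Theorem~\ref{main:Morse} (orbits exit through the new boundary); this is precisely why the paper collapses the new boundary components to singletons, turning them into sinks and sources of a genuine gradient flow before applying the Morse characterization. Your subsidiary claim that the separatrix condition rules out interior multi-saddles of order at least $2$ is also wrong as stated: a $2$-saddle all of whose separatrices run to sinks and sources has no multi-saddle separatrix at all, so the hypothesis $\mathrm{P}_{\mathrm{ms}}(v)\subset\partial S$ says nothing about it. (In condition $(4)$ this is moot anyway, since topological hyperbolicity of the closed orbits already forces every singular point to be a saddle, $\partial$-saddle, sink, $\partial$-sink, source, or $\partial$-source.) Finally, the gluing step and the verification of the transversality condition $(\mathrm{MS}3)$, which you yourself flag as the hardest part, is exactly where the paper's collapse construction does the work, and it remains unexecuted in your proposal.
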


\subsection{Characterization of ``gradient flows with limit cycles'' on compact surfaces}

As mentioned above, a Morse flow is a gradient flows without multi-saddle separatrices \cite{smale1961gradient}.
Similarly, the resulting flow from a Morse-Smale flow by cutting limit cycles and by collapsing new boundary components into singletons is a gradient (cf. Theorem~\ref{main:d}).
Moreover, we show that so is the resulting flow from a quasi-regular Morse-Smale flow by cutting non-trivial circuits and by collapsing new boundary components into singletons (see Theorem~\ref{main:qausi-MS}).
More precisely, we recall some concepts to state the statement.

By a non-trivial circuit, we mean either a cycle or a continuous image of a directed cycle which is a graph but not a singleton, whose orientations of edges correspond to the directions of orbits, and which is the union of separatrices and finitely many singular points.
In other words, a non-trivial circuit which is not a cycle, is a directed path as a graph whose initial point is also terminal.
Roughly speaking, a non-trivial holonomy along a non-trivial circuit $\gamma$ corresponds with the first return map on a transverse near $\gamma$ and non-triviality of the holonomy corresponds with non-identity of the first return map.
A non-trivial circuit $\gamma$ has non-trivial holonomy if there are open transverse arcs $I$ and $J$ whose boundary component is a point $x \in \gamma$ and there is an open flow box $U$ whose vertical boundary components $\partial_\perp^- U$ and $\partial_\perp^+ U$ are $I$ and $J$ (i.e. $\partial_\perp^- U = I$ and $\partial_\perp^+ U = J$) such that $\gamma$ is a connected component of the difference $\partial U - (\partial_\perp^- U \cup \partial_\perp^+ U)$ and that the first return map $f_v: I \to J$ is not identical (see Figure~\ref{hol}).
\begin{figure}
\begin{center}
\includegraphics[scale=0.35]{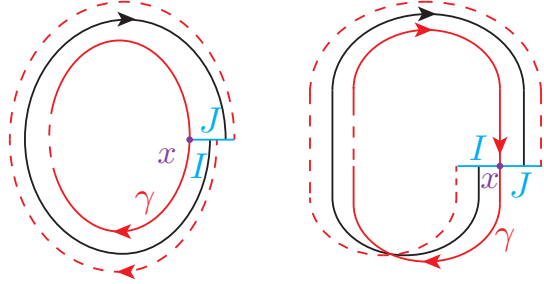}
\end{center}
\caption{The figure on the left is an orientable holonomy from $I$ to $J$ along a non-trivial circuit $\gamma$, and the figure on the right is a non-orientable holonomy from $I$ to $J$ along a non-trivial circuit $\gamma$}
\label{hol}
\end{figure}%

Let $v$ be a flow on a surface $S$ and $\Gamma \subseteq \Sv \sqcup \mathrm{P}(v)$ the union of non-trivial circuits with non-trivial holonomy.
Denote by $S_{\mathrm{me}}$ the metric completion of the difference $S - \Gamma$.
Let $p_{\mathrm{me}}: S_{\mathrm{me}} \to S$ be the canonical projection and the lift $\Gamma_{\mathrm{me}} := p_{\mathrm{me}}^{-1}(\Gamma)$.
Note that if $\Gamma$ consists of finitely many orbits then $S - \Gamma = S_{\mathrm{me}} - \Gamma_{\mathrm{me}}$ and the restriction $p_{\mathrm{me}}|_{\Gamma_{\mathrm{me}}}: \Gamma_{\mathrm{me}} \to \Gamma$ is an immersion.
By constriction, the restriction $p_{\mathrm{me}}|_{\Gamma_{\mathrm{me}} \setminus p_{\mathrm{me}}^{-1}(\Sv)}: \Gamma_{\mathrm{me}} \setminus p_{\mathrm{me}}^{-1}(\Sv) \to \Gamma \setminus \Sv$ is an immersion.
The induced flow $v_{\mathrm{me}}$ on $S_{\mathrm{me}}$ is well-defined such that $v_{\mathrm{me}} = v$ on the complement $S - \Gamma = S_{\mathrm{me}} - \Gamma_{\mathrm{me}}$ and that the image of any orbit in $\Gamma_{\mathrm{me}}$ with respect to $v_{\mathrm{me}}$ is an orbit with respect to $v$.
Let $S_{\mathrm{col}}$ be the resulting surface from $v_{\mathrm{me}}$ by collapsing limit circuits into singletons,  $v_{\mathrm{col}}$ the resulting flow on $S_{\mathrm{col}}$, and $p_{\mathrm{col}}: S_{\mathrm{me}} \to S_{\mathrm{col}}$ the canonical projection (see Figure~\ref{Fig:blowdown}).
\begin{figure}
\[
\xymatrix@=18pt{
S \ar@{}[d]|{\bigcup} & & S_{\mathrm{me}}\ar[ll]_{p_{\mathrm{me}}} \ar[rr]^{p_{\mathrm{col}}} \ar@{}[d]|{\bigcup} & &  S_{\mathrm{col}} \ar@{}[d]|{\bigcup} \\
S - \Gamma & &  S_{\mathrm{me}} - \Gamma_{\mathrm{me}} \ar@{=}[ll]_{p_{\mathrm{me}}|}
 \ar@{=}[rr]^{p_{\mathrm{col}}|} & &  S_{\mathrm{col}} - \Gamma_{\mathrm{col}}
 }
\]
\caption{Canonical quotient mappings induced by the metric completion and the collapse for the case that $\Gamma$ consists of finitely many orbits.}
\label{Fig:blowdown}
\end{figure}
Put $\Gamma_{\mathrm{col}} := p_{\mathrm{col}}(\Gamma_{\mathrm{me}}) = p_{\mathrm{col}}(p_{\mathrm{me}}^{-1}(\Gamma))$.
A flow $v$ on a surface $S$ is quasi-Morse-Smale if the resulting flow $v_{\mathrm{col}}$ is a gradient flow.
Roughly speaking, quasi-regular Morse-Smale-like flows can be considered as ``gradient flows with non-trivial circuits''.
Precisely, we have the following equivalence of quasi-Morse-Smale flows and Morse-Smale-like flows under quasi-regularity and finite existence of limit cycles.

\begin{main}\label{main:qausi-MS}
The following are equivalent for a quasi-regular flow $v$ on a compact surface:
\\
{\rm(1)} The flow $v$ is Morse-Smale-like.
\\
{\rm(2)} The flow $v$ is quasi-Morse-Smale and has at most finitely many limit cycles.
\end{main}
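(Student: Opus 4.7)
The plan is to reduce both implications to Theorem~\ref{main:c}, which asserts that a flow on a compact surface is a gradient flow with finitely many singular points if and only if it is Morse-Smale-like with neither elliptic sectors nor non-trivial circuits. Since quasi-regularity rules out elliptic sectors and forces finitely many singular points, the construction $v \mapsto v_{\mathrm{col}}$ should be viewed as a surgery that converts ``Morse-Smale-like with finitely many limit cycles'' into ``Morse-Smale-like without non-trivial circuits''.

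For $(1)\Rightarrow(2)$, the first step is to establish the key observation that in any Morse-Smale-like flow every non-trivial circuit $\gamma$ carries non-trivial holonomy. Indeed, if the first return map on a transverse near $\gamma$ were the identity on a set with non-empty interior, every point of that interior would be periodic, yielding an open set of recurrent points and contradicting the open density of the non-recurrent set. Hence $\Gamma$ absorbs every non-trivial circuit of $v$, and quasi-regularity---which bounds the number of multi-saddles and hence of separatrices---together with finiteness of limit cycles forces $\Gamma$ to be a finite union of orbits, so $S_{\mathrm{col}}$ is compact with finitely many new singular points. Since $v_{\mathrm{col}}$ is orbit-equivalent to $v$ on $S \setminus \Gamma \cong S_{\mathrm{col}} \setminus \Gamma_{\mathrm{col}}$, the Morse-Smale-like conditions transfer; a collapsed limit cycle becomes a topological ($\partial$-)sink or ($\partial$-)source, while a collapsed non-cycle circuit inherits only parabolic and hyperbolic sectors from the complementary regions of $\gamma$, with no new elliptic sectors because the holonomy is non-trivial. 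No non-trivial circuit can survive in $v_{\mathrm{col}}$: any such circuit would pull back, via the identification $S_{\mathrm{col}} \setminus \Gamma_{\mathrm{col}} \cong S \setminus \Gamma$, to a non-trivial circuit of $v$ not entirely contained in $\Gamma$, contradicting the construction of $\Gamma$. Theorem~\ref{main:c} then yields that $v_{\mathrm{col}}$ is gradient, so $v$ is quasi-Morse-Smale.

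For $(2)\Rightarrow(1)$, quasi-regularity together with finitely many limit cycles again forces $\Gamma$ to be a finite union of orbits, so $v_{\mathrm{col}}$ has finitely many singular points; being gradient by hypothesis, Theorem~\ref{main:c} implies that $v_{\mathrm{col}}$ is Morse-Smale-like. On the open dense complement $S \setminus \Gamma$ the two flows share orbits, so the open dense non-recurrent set of $v_{\mathrm{col}}$ pulls back to an open dense non-recurrent set of $v$. Any recurrent point of $v$ is either outside $\Gamma$---in which case it is recurrent for $v_{\mathrm{col}}$, hence a singular point there and a closed orbit of $v$---or it lies on a limit cycle or a multi-saddle inside $\Gamma$, which is in every case a closed orbit of $v$. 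Each singular point of $v$ is finitely sectored by quasi-regularity, so $v$ is Morse-Smale-like.

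The main obstacle I anticipate is the verification that the singular points of $v_{\mathrm{col}}$ produced by collapsing non-cycle circuits are finitely sectored without elliptic sectors. This demands a careful local analysis of how the non-trivial, possibly non-orientable, holonomy interacts with the multi-saddles visited by $\gamma$; quasi-regularity is essential here, ensuring that each complementary region of $\gamma$ contributes a parabolic or hyperbolic sector rather than a Reeb-type elliptic one. Once this local model is in hand, Theorem~\ref{main:c} drives both directions of the equivalence.
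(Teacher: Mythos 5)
Your overall strategy coincides with the paper's: Theorem~\ref{main:qausi-MS} is deduced there from Proposition~\ref{lem:qms_char}, whose proof is exactly the reduction you describe --- pass to $v_{\mathrm{col}}$, check that it is Morse-Smale-like without elliptic sectors or non-trivial circuits, and invoke Theorem~\ref{main:c}. The bookkeeping you flag as the ``main obstacle'' (finitely sectored singular points, no new elliptic sectors at collapsed points, preservation of recurrence) is packaged in the paper as Lemma~\ref{lem:col}, Lemma~\ref{lem:fin_hol} and Lemma~\ref{lem:fin_sec}, and your direction $(2)\Rightarrow(1)$ matches the paper's almost verbatim.

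The genuine gap is in your ``key observation'' for $(1)\Rightarrow(2)$, namely that every non-trivial circuit of a Morse-Smale-like flow carries non-trivial holonomy. Your argument rules out only one failure mode: a first return map that exists and is the identity on a set with non-empty interior. It does not address circuits along which no first return map is defined on either side. Concretely, if a circuit passes through a $k$-saddle with $k \geq 2$ and its incoming stable and outgoing unstable separatrices at that saddle are separated by other separatrices on both sides, then every nearby orbit exits into a hyperbolic sector not adjacent to the circuit, so no flow box $U$ as in the definition of holonomy exists; such a circuit is neither periodic, nor a limit circuit, nor does it have non-trivial holonomy. It therefore does not lie in $\Gamma$, is not collapsed, and survives as a non-trivial circuit of $v_{\mathrm{col}}$ --- at which point Theorem~\ref{main:c} no longer applies, since its hypothesis excludes all non-trivial circuits, not merely limit circuits. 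Note that what the paper establishes is weaker and more guarded than your claim: Corollary~\ref{cor:per_lc} puts all periodic orbits into $\Gamma$, and Lemma~\ref{lem:dich} gives the dichotomy ``non-orientable holonomy or limit circuit'' only for circuits already assumed to have non-trivial holonomy (compare also Corollary~\ref{cor:dic}, which requires the extra hypothesis that heteroclinic multi-saddle separatrices lie on $\partial S$). To close your argument you must either show that the escaping configuration above cannot occur in a quasi-regular Morse-Smale-like flow, or handle surviving circuits of this kind by a separate argument before applying Theorem~\ref{main:c}.
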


\subsection{Generic non-gradient and non-Morse-Smale flows on surfaces}

Notice that the set of $C^1$ gradient flows is not open in the set of $C^1$ flows because singular points need not non-degenerate.
Moreover, limit cycles for $C^1$ flows can be bifurcated into topologically non-hyperbolic limit cycles.
However, forbidding the existence of fake limit cycles, fixing the sum of indices of sources, sinks, $\partial$-sources, and $\partial$-sinks, and fixing the number of limit cycles, we topologically characterize generic intermediate flows of gradient flows and of Morse-Smale flows.
Here a fake limit cycle is a limit cycle that is semi-attracting on a side and semi-repelling on another side.
In fact, roughly speaking, generic intermediate flows between gradient flows on compact surfaces can be obtained from Morse flows either by merging exactly one pair of separatrices from/to multi-saddles or by pinching exactly two $\partial$-saddles unless annihilations and creations of singular points occur.
Similarly, generic intermediate flows between Morse-Smale flows on compact surfaces can be obtained from Morse-Smale flows either by merging exactly one pair of separatrices from/to multi-saddles or by pinching exactly two $\partial$-saddles unless annihilations and creations of singular points and limit cycles occur.
More precisely, we recall some concepts to state the statement.

Denote by $\mathrm{P}_{\mathrm{ms}}(v)$ the union of multi-saddle separatrices.
A singular point is attracting if it is either a sink or a $\partial$-sink, and is repelling if it is either  a source, and a $\partial$-source.
A singular point is a fake saddle if it is either $0$-saddle or $0$-$\partial$-saddle.
A parabolic sector $A$ for a singular point $x$ is fake if there are hyperbolic border separatrices $\gamma, \mu$ from/to $x$ such that $A$ is transversely bounded by $\gamma, \mu$ as in Figure~\ref{fake_parabolic_sector}.
\begin{figure}
\begin{center}
\includegraphics[scale=0.15]{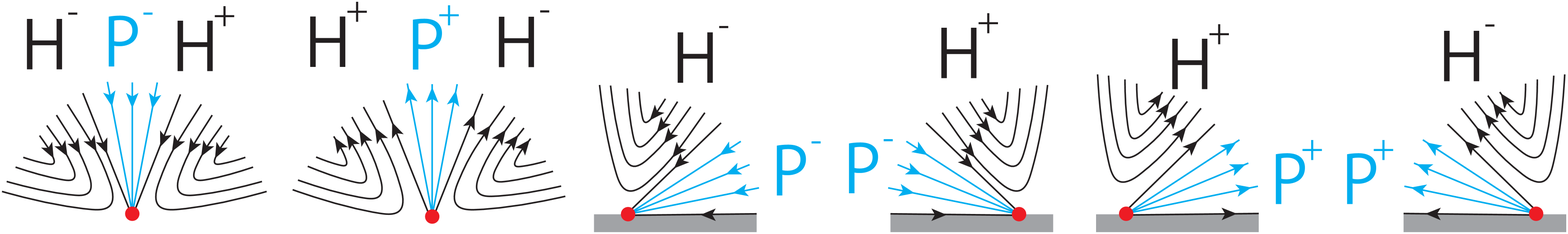}
\end{center}
\caption{Fake parabolic separatrices}
\label{fake_parabolic_sector}
\end{figure}%
Roughly speaking, a fake parabolic sector is a parabolic sector either between two hyperbolic sectors or between one hyperbolic sector and a boundary component.
Theorem~\ref{main:a} implies that a gradient flow with isolated singular point on a surface is quasi-regular if and only if there are no fake parabolic sectors.

For any $k \in \mathbb{Z}_{>0}$ and $r \in \mathbb{Z}_{\geq 0}$, denote by $\mathcal{G}^r_{k/2,*}(S)$ the set of quasi-regular gradient $C^r$ flows without fake saddles on a compact surface $S$ whose sum of indices of attracting/repelling singular points (i.e. sinks, $\partial$-sinks, sources, and $\partial$-sources) is $k/2$.
Equip $\mathcal{G}^r_{k/2,*}(S)$ with the $C^r$ topology.
%
Denote by $\mathcal{M}^r_{k/2}(S)$ the set of Morse $C^r$ flows whose sum of indices of attracting/repelling singular points is $k/2$.
We state the open density of Morse flows under conditions that there are no annihilations/creations of singular points and of limit cycles.

\begin{main}\label{th:open_dense_M}
For any $r \in \mathbb{Z}_{> 0}$,  the set of Morse flows on a compact surface $S$ in $\mathcal{G}^r_{k/2,*}(S)$ is open dense in $\mathcal{G}^r_{k/2,*}(S)$.
\end{main}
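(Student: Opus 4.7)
The plan is to treat openness and density separately. The entire argument rests on perturbing not the vector field directly but the potential of which it is the gradient, so as never to leave the gradient class, combined with the topological control provided by Theorem~\ref{main:a} and Theorem~\ref{main:Morse}.

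For \emph{openness}, I would apply Peixoto's structural stability theorem. If $v\in\mathcal{G}^r_{k/2,*}(S)$ is Morse, i.e.\ Morse-Smale without limit cycles, then there is a $C^r$-neighborhood $\mathcal{U}\subset\chi^r(S)$ of $v$ on which every flow is topologically equivalent to $v$. For any $v'\in\mathcal{U}\cap\mathcal{G}^r_{k/2,*}(S)$, this equivalence preserves the topological non-degeneracy of singular points, the absence of limit cycles, and the absence of interior multi-saddle separatrices, so by clause~(2) of Theorem~\ref{main:Morse} the flow $v'$ is itself Morse.

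For \emph{density}, I would write $v=-\nabla_g f$ with $g$ a smooth metric on $S$ and $f\in C^{r+1}(S)$ and then perturb $f$ in two stages. First, Morsify the critical points. By Theorem~\ref{main:a} together with the no-fake-saddle hypothesis, each critical point of $f$ is topologically a sink, source, ordinary saddle, multi-saddle with $2(k+1)\ge4$ separatrices, or a boundary analogue. Near a (possibly degenerate) local minimum $x_0$ I add $\epsilon\phi(x)\|x-x_0\|^2$ with a bump $\phi\equiv 1$ on a smaller disc: the Hessian at $x_0$ becomes strictly positive definite while the extra term $2\epsilon(x-x_0)$ dominates $\nabla f$ on the support, so no new critical points appear and the topological sink persists as a single non-degenerate sink. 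Local maxima, $\partial$-sinks, and $\partial$-sources are treated symmetrically. At a multi-saddle of index $-k$, the sectored local structure permits the standard unfolding by a generic small polynomial perturbation supported in a local disc, splitting the degenerate critical point into $k$ non-degenerate ordinary saddles of total index $-k$, so that the sum of attracting/repelling indices is unchanged. Performing these local perturbations simultaneously at the finitely many singular points produces a Morse potential $f_1$ with $v_1=-\nabla_g f_1\in\mathcal{G}^r_{k/2,*}(S)$ arbitrarily $C^r$-close to $v$. Second, break the saddle connections: $v_1$ has only finitely many saddles, hence only finitely many interior heteroclinic separatrices, and a standard transversality argument perturbs $f_1$ inside a flow-box along each given connection to deflect one separatrix off the other saddle. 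Iterating yields a Morse function $f_2$ whose gradient flow $\tilde v$ meets clause~(2) of Theorem~\ref{main:Morse} and lies in $\mathcal{G}^r_{k/2,*}(S)$.

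The main obstacle is the Morsification step. A generic $C^r$-approximation of $f$ by a Morse function will in general spawn extra sinks, sources, or saddles, altering the attracting/repelling index sum and thereby moving the approximation out of $\mathcal{G}^r_{k/2,*}(S)$. What rescues the argument is the combination of quasi-regularity and the no-fake-saddle assumption: via Theorem~\ref{main:a} this pins down the local sectored structure of each singular point to be exactly that of a non-degenerate sink, source, or $k$-saddle with $k\ge1$, restricting the admissible local unfoldings to those that preserve the topological type of the singular point and the $k/2$ invariant.
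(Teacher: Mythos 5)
Your two-step plan (openness, then density) matches the paper's in outline, but both halves rest on hypotheses that the class $\mathcal{G}^r_{k/2,*}(S)$ does not supply. For openness you invoke Peixoto's structural stability theorem, but in this paper ``Morse'' is a purely topological notion: a flow in $\mathcal{G}^r_{k/2,*}(S)$ is only required to be \emph{topologically equivalent} to a flow generated by a Morse--Smale vector field, so its singular points may be differentiably degenerate (e.g.\ the gradient flow of $(x^2+y^2)^2$ has a topological sink with vanishing linearization). Structural stability does not transfer across a mere topological equivalence, and the paper itself warns that non-degeneracy of $C^1$ Morse--Smale \emph{flows} is not an open condition in general. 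The mechanism that actually yields openness is the one you mention only in passing, and only for density: the fixed index sum $k/2$ together with quasi-regularity and the absence of fake saddles. This is how the paper argues --- Lemma~\ref{m2} uses Poincar\'e--Hopf and the stability of tangency counts along almost-transverse loops to show that the topological type of every singular point is preserved under small perturbation inside $\chi^s_{k/2}(S)$, Lemma~\ref{m45} then gives openness of the condition $\mathrm{P}_{\mathrm{ms}}(v)\subset\partial S$, and Theorem~\ref{main:Morse} converts these two open dense conditions into ``Morse''.

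For density, you write $v=-\nabla_g f$ and perturb the potential, but again a flow in $\mathcal{G}^r_{k/2,*}(S)$ is only topologically equivalent to a gradient flow; the conjugating homeomorphism need not be smooth, so a $C^{r+1}$-small perturbation of $f$ does not translate into a $C^r$-small perturbation of $v$ itself, and the proposed approximants are not known to be $C^r$-close to $v$ in the topology of $\mathcal{G}^r_{k/2,*}(S)$. The paper avoids this by perturbing the flow directly --- splitting multi-saddles into saddles and $\partial$-saddles as in Figure~\ref{split_multi_saddle}, and pushing interior multi-saddle separatrices into the boundary as in Lemma~\ref{m4} --- and then re-certifying gradience and Morse-ness afterwards through the topological characterizations of Theorems~\ref{main:c} and~\ref{main:Morse}. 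Your Morsification and connection-breaking steps are the right local moves, but as written they are not perturbations of $v$ in the relevant topology, and the openness half needs to be rebuilt on the index argument rather than on structural stability.
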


A regular flow in $\mathcal{G}_{k/2,*}(S)$ is $h$-unstable if there is exactly one multi-saddle separatrix outside of the boundary $\partial S$.
Roughly speaking, it is known that $\Omega$-stable flows on closed surfaces are ``Morse-Smale'' flow without the non-existence condition of heteroclinic separatrices.
In fact,
a $C^1$ vector field on a closed manifold is $\Omega$-stable if and only if it satisfies the no-cycle condition  and all non-wandering orbits are hyperbolic closed orbits \cite{hayashi1997connecting,pugh1970omega}.
Therefore any $\Omega$-stable flows on closed surfaces correspond to ``Morse-Smale'' flows without the non-existence condition of heteroclinic separatrices, and so any $h$-unstable gradient flows are $\Omega$-stable.
A flow $v \in \mathcal{G}_{k/2,*}(S)$ with $\mathrm{P}_{\mathrm{ms}}(v) \subset \partial S$ is $p$-unstable if all singular points except one $1$-$\partial$-saddle are topologically hyperbolic.
By definition, notice that the $p$-unstable flows are not $\Omega$-stable.
Put $\mathcal{M}_{k/2}^r(S)^c := \mathcal{G}_{k/2,*}^r(S) - \mathcal{M}_{k/2}^r(S)$.
Roughly speaking, the following theorem says that generic intermediate flows between gradient flows on compact surfaces can be obtained from Morse flows either by merging exactly one pair of separatrices from/to multi-saddles or by pinching exactly two $\partial$-saddles unless annihilations and creations of singular points occur.
In other words, we describe generic transitions between gradient flows with Morse flows on a compact surface  under the non-existence of annihilations or creations of singular points, the quasi-regularity for singular points,  and the non-existence of fake saddles and fake parabolic sectors.

\begin{main}\label{main:e}
For any $r \in \mathbb{Z}_{\geq 0}$, the $h$-unstable flows and $p$-unstable flows in $\mathcal{M}_{k/2}^r(S)^c$ forms an open dense subset in $\mathcal{M}_{k/2}^r(S)^c$.
\end{main}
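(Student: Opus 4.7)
The plan is to establish openness and density of the union $\mathcal{U}$ of $h$-unstable and $p$-unstable flows inside $\mathcal{M}_{k/2}^r(S)^c$ separately. The key preliminary observation is that by Theorem~\ref{main:Morse}, a flow $v \in \mathcal{G}^r_{k/2,*}(S)$ fails to be Morse if and only if it exhibits at least one obstruction of the following two types: (i) a singular point that is not topologically hyperbolic; or (ii) a multi-saddle separatrix lying outside $\partial S$. By definition, $h$-unstable flows realize exactly one obstruction of type (ii) and none of type (i), whereas $p$-unstable flows realize exactly one obstruction of type (i), localized at a single non-hyperbolic $1$-$\partial$-saddle, together with $\mathrm{P}_{\mathrm{ms}}(v) \subset \partial S$.

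For openness I would take $v \in \mathcal{U}$ and a $C^r$-close $w \in \mathcal{M}_{k/2}^r(S)^c$. All topologically hyperbolic singular points of $v$ and their separatrix structure persist in $w$, and in-boundary multi-saddle separatrices remain in $\partial S$ by invariance of the boundary. If $v$ is $h$-unstable, the unique heteroclinic separatrix outside $\partial S$ persists for $w$ by the standard transversality of stable and unstable manifolds of hyperbolic multi-saddles, and no new obstruction appears nearby, so $w$ is again $h$-unstable. If $v$ is $p$-unstable, the non-hyperbolic $1$-$\partial$-saddle $p$ either persists (so $w$ is $p$-unstable) or resolves into hyperbolic pieces; in the latter case, since the index sum of attracting/repelling singular points is fixed and $w \notin \mathcal{M}_{k/2}^r(S)$, a unique heteroclinic multi-saddle separatrix must appear near $p$, making $w$ $h$-unstable. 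Either way $w \in \mathcal{U}$.

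For density, given $v \in \mathcal{M}_{k/2}^r(S)^c$ with $N \geq 1$ obstructions of types (i) and (ii) in total, I plan to iteratively perturb $v$ so as to reduce $N$ to exactly $1$ while staying inside $\mathcal{G}^r_{k/2,*}(S)$. An interior $k$-multi-saddle with $k \geq 2$ can be split locally into a hyperbolic $1$-saddle plus a $(k-1)$-multi-saddle together with a controlled collection of new heteroclinic separatrices; a non-hyperbolic $\partial$-saddle can be split via the inverse of pinching into two hyperbolic $\partial$-saddles (or into a hyperbolic interior saddle plus a hyperbolic $\partial$-saddle); and a heteroclinic multi-saddle separatrix outside $\partial S$ can be destroyed by a compactly supported local perturbation that pushes the two invariant manifolds apart. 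Each of these moves preserves the index sum of attracting/repelling singular points, does not create fake saddles or fake parabolic sectors, and keeps the flow gradient, so the perturbed flow remains in $\mathcal{G}^r_{k/2,*}(S)$. Iterating until exactly one obstruction survives places the flow in $\mathcal{U}$.

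The main obstacle is the bookkeeping in the density step: each local bifurcation may produce fresh heteroclinic separatrices, and one must verify that the total obstruction count strictly decreases and that the list of forbidden features (fake saddles, fake parabolic sectors, elliptic sectors, non-trivial circuits) remains empty, invoking Theorem~\ref{main:a} and Theorem~\ref{main:c} to certify that the result is still a quasi-regular gradient flow. The heart of the analysis is the explicit description of the two elementary bifurcations---the merging of two multi-saddle separatrices and the pinching of two $\partial$-saddles---from which the terms $h$-unstable and $p$-unstable take their names.
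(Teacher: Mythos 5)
Your overall strategy coincides with the paper's: use the dichotomy of obstructions (non-hyperbolic multi-saddle versus interior multi-saddle separatrix, i.e.\ Lemma~\ref{lem:Morse_dic}), obtain density by splitting multi-saddles and annihilating all but one obstruction (Lemmas~\ref{lem:multi-saddle} and~\ref{m4}), and obtain openness from the fixed index sum together with the stability of the separatrix count. However, there is a genuine error in your openness argument for $h$-unstable flows: you claim the unique heteroclinic separatrix outside $\partial S$ ``persists for $w$ by the standard transversality of stable and unstable manifolds of hyperbolic multi-saddles.'' On a surface the stable and unstable manifolds of saddles are one-dimensional, so a saddle connection is a non-transverse, codimension-one phenomenon; it does \emph{not} persist under arbitrary small perturbations --- indeed, breaking it is exactly how one perturbs an $h$-unstable flow into a Morse flow. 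If your transversality claim were correct, it would show that $h$-unstable flows are open in all of $\mathcal{G}^r_{k/2,*}(S)$, contradicting Theorem~\ref{th:open_dense_M}.

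The correct mechanism, which is the one the paper uses, is that openness is only asserted relative to $\mathcal{M}_{k/2}^r(S)^c$: a nearby flow $w$ is still non-Morse by hypothesis; by Lemma~\ref{m2} and the fixed index sum all its singular points remain topologically hyperbolic of the same types; hence by Lemma~\ref{lem:Morse_dic} the only available obstruction for $w$ is a multi-saddle separatrix outside $\partial S$, and since small perturbations cannot increase the number of such separatrices (the remark following Lemma~\ref{m4}), exactly one survives. Your treatment of the $p$-unstable case is closer to the mark (you correctly allow the non-hyperbolic $\partial$-saddle to resolve and conclude only that the \emph{union} is open, whereas the paper argues each class is separately open using that a resolution inside $\mathcal{M}_{k/2}^r(S)^c$ would contradict the openness of $\mathrm{P}_{\mathrm{ms}}(v)\subset\partial S$), but the $h$-unstable step needs to be rewritten without the appeal to transversality. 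The density argument and its bookkeeping concerns are essentially those of the paper and are adequately covered by Lemmas~\ref{lem:multi-saddle} and~\ref{m4}.
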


We have the following complete list of generic transitions between Morse flows.

\begin{main}\label{th:generic_gradient_trans}
For any $r \in \mathbb{Z}_{>0}$, The transitions among gradient flows in $\mathcal{G}^r_{k/2,*}(S)$  listed as in Figure~\ref{trans_all} forms an open dense subset in $\mathcal{M}_{k/2}^r(S)^c$.
\begin{figure}
\begin{center}
\includegraphics[scale=0.3]{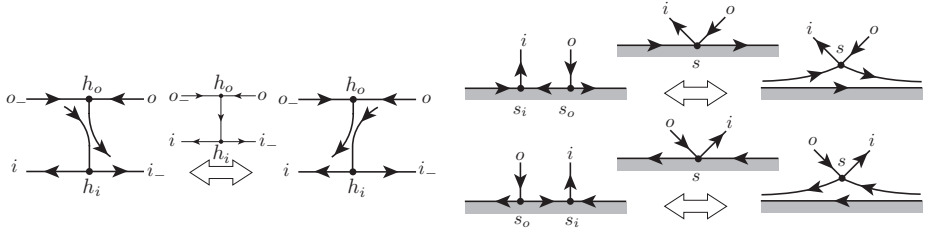}
\end{center}
\caption{Left, all heteroclinic separatrices between a saddle/$\partial$-saddle $h_o$ and a saddle/$\partial$-saddle $h_i$ which are not contained in limit circuits of $h$-unstable flows; right, all pinching structures in $p$-unstable flows.}
\label{trans_all}
\end{figure}
In particular, there are exactly four kinds of heteroclinic separatrices in $h$-unstable on the left in Figure~\ref{trans_all} and exactly two kinds of pinching structures in $p$-unstable flows on the right in Figure~\ref{trans_all}.
\end{main}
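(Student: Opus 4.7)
The plan is to combine Theorem~\ref{main:e} with an explicit local classification plus a small genericity step. Theorem~\ref{main:e} already asserts that the union $\mathcal{H}\cup\mathcal{P}$ of $h$-unstable and $p$-unstable flows is open dense in $\mathcal{M}^r_{k/2}(S)^c$, so it suffices to show: (i) inside $\mathcal{H}$, the subset of flows whose unique interior multi-saddle separatrix is heteroclinic and not contained in any limit circuit is open dense, and such separatrices realize exactly the four pictures on the left of Figure~\ref{trans_all}; and (ii) every flow in $\mathcal{P}$ realizes exactly one of the two pictures on the right of Figure~\ref{trans_all}.

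For (i), fix $v\in\mathcal{H}$ with unique interior multi-saddle separatrix $\sigma$. First, I would show that generically $\sigma$ is heteroclinic and not part of a non-trivial circuit: a homoclinic loop based at a single multi-saddle, or a separatrix inside a non-trivial circuit, can be destroyed by an arbitrarily small $C^r$-perturbation supported in a flow-box around a transverse arc to $\sigma$, breaking the loop into two boundary or wandering pieces while remaining in $\mathcal{G}^r_{k/2,\ast}(S)$. Hence these configurations form a nowhere dense subset of $\mathcal{H}$. Second, once $\sigma$ is heteroclinic with initial multi-saddle $h_o$ and terminal multi-saddle $h_i$, the regularity required in the definition of $\mathcal{M}^r_{k/2}(S)^c$, combined with quasi-regularity and $v\in\mathcal{G}^r_{k/2,\ast}(S)$, forces each of $h_o,h_i$ to be either a $1$-saddle or a $1$-$\partial$-saddle. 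The four combinations $(h_o,h_i)\in\{\text{saddle},\partial\text{-saddle}\}^{2}$ match exactly the four pictures on the left of Figure~\ref{trans_all}, and the $h$-unstable condition together with the normal forms in Figure~\ref{non-deg-sing} and Theorem~\ref{main:Morse} determine the topological type of a tubular neighborhood of $\sigma\cup\{h_o,h_i\}$ from the pair $(h_o,h_i)$.

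For (ii), fix $v\in\mathcal{P}$ and let $p$ be its unique non-topologically-hyperbolic singular point, which by definition is a $1$-$\partial$-saddle arising from a pinching. By Theorem~\ref{main:a}, $p$ is a non-trivial finitely sectored singular point without elliptic sectors, and the absence of fake saddles together with the absence of fake parabolic sectors (both encoded in $\mathcal{G}^r_{k/2,\ast}(S)$) severely restrict the admissible sector decomposition at $p$. Enumerating the ways two local arcs of $\partial S$ may pinch at $p$ so as to produce a non-topologically-hyperbolic $1$-$\partial$-saddle yields exactly the two pictures on the right of Figure~\ref{trans_all}, distinguished by the relative orientation of the flow on the two sides of the pinch.

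The main obstacle is the perturbation step in~(i): verifying that homoclinic loops and separatrices contained in non-trivial circuits really do form a nowhere dense subset of $\mathcal{H}$. This requires constructing explicit small $C^r$-perturbations that break each such configuration into an admissible heteroclinic separatrix while preserving quasi-regularity, the absence of fake saddles and fake parabolic sectors, the index sum $k/2$, and the defining property of having exactly one interior multi-saddle separatrix. Once this perturbation argument is in place, combining it with Theorem~\ref{main:e} and the local classifications above shows that the four heteroclinic and two pinching configurations of Figure~\ref{trans_all} together form an open dense subset of $\mathcal{M}^r_{k/2}(S)^c$, completing the proof.
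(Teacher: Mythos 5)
Your overall strategy (reduce to Theorem~\ref{main:e} and then classify the local configurations) is the same as the paper's, but you have mislocated the difficulty, and the step you single out as ``the main obstacle'' is in fact vacuous. Every flow in $\mathcal{M}_{k/2}^r(S)^c = \mathcal{G}_{k/2,*}^r(S) - \mathcal{M}_{k/2}^r(S)$ is by definition a \emph{gradient} flow, and for a gradient flow the $\alpha$-limit and $\omega$-limit sets of any non-singular orbit are distinct singular points; hence there are no homoclinic separatrices at all, and by Theorem~\ref{main:c} there are no non-trivial circuits either. So the configurations you propose to destroy by a supported $C^r$-perturbation (homoclinic loops, separatrices inside non-trivial circuits) simply do not occur in $\mathcal{H}$, and no perturbation argument, no nowhere-density claim, and no verification that the perturbation stays in $\mathcal{G}^r_{k/2,*}(S)$ are needed. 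The paper disposes of this in one sentence: since gradient flows have no homoclinic separatrices, every multi-saddle separatrix is heteroclinic. Attempting to carry out your perturbation step as written would leave you constructing deformations of flows that cannot exist in the space under consideration.

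With that step deleted, the remainder of your argument matches the paper's: Theorem~\ref{main:e} gives open density of the $h$-unstable and $p$-unstable flows in $\mathcal{M}_{k/2}^r(S)^c$, the unique interior separatrix of an $h$-unstable flow is heteroclinic and joins two topologically hyperbolic multi-saddles, so the endpoints range over $\{\text{saddle},\partial\text{-saddle}\}^2$, giving the four pictures on the left of Figure~\ref{trans_all}; and the $p$-unstable case contributes the two pinching structures on the right. One further caution: openness of the listed configurations comes from Theorem~\ref{main:e} itself (the $h$- and $p$-unstable flows already form the open dense set), not from an additional openness argument inside $\mathcal{H}$, so your two-stage ``open dense inside $\mathcal{H}$'' formulation is an unnecessary refinement of an already complete statement.
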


Notice that generic local transitions on non-orientable surfaces can occur (e.g. Figure~\ref{nonori_hol}).

For any $k \in \mathbb{Z}_{>0}$, $r \in \mathbb{Z}_{\geq 0}$, and $l \in \mathbb{Z}_{>0}$, denote by $\mathcal{Q}_{k/2,l,**}^r(S)$ the set of quasi-regular Morse-Smale-like flows without fake saddles or fake limit cycles whose sum of indices of attracting/repelling singular points is $k/2$ and whose numbers of limit circuits are $l$.
From Theorem~\ref{main:qausi-MS}, any flow in $\mathcal{Q}_{k/2,l,**}^r(S)$ is quasi-Morse-Smale.
Denote by $\mathcal{MS}_{k/2,l}^r(S)$ the set of $C^r$ Morse-Smale flows whose sum of indices of attracting/repelling singular points is $k/2$ and each of whose number of limit circuits is $l$.
We state the open density of Morse-Smale flows under the non-existence of annihilations or creations of singular points and limit cycles, the quasi-regularity for singular points, and the non-existence of fake saddles and fake parabolic sectors

\begin{main}\label{th:open_dense_MS}
For any $r \in \mathbb{Z}_{>0}$, the set of Morse-Smale flows on a compact surface $S$ in $\mathcal{Q}^r_{k/2,l,**}(S)$ is open dense in $\mathcal{Q}^r_{k/2,l,**}(S)$.
\end{main}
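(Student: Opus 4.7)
Openness follows directly from the structural stability of Morse-Smale flows on compact surfaces: if $v \in \mathcal{MS}_{k/2,l}^r(S)$, then there is a $C^r$-neighborhood $U \subset \chi^r(S)$ of flows topologically equivalent to $v$, and every $w \in U \cap \mathcal{Q}_{k/2,l,**}^r(S)$ is then Morse-Smale with the correct index sum $k/2$ and limit-cycle count $l$, because being Morse-Smale and the constraints defining $\mathcal{Q}_{k/2,l,**}^r(S)$ are all topological-equivalence invariants.

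For density my plan is to parallel the proof of Theorem~\ref{th:open_dense_M} and handle the extra limit-cycle structure on top. Given $v \in \mathcal{Q}_{k/2,l,**}^r(S)$, Theorem~\ref{main:d} identifies precisely four possible obstructions to $v$ being Morse-Smale: (i) a multi-saddle of order $\ge 2$, (ii) a limit cycle that is not topologically hyperbolic, (iii) a non-trivial circuit with non-trivial holonomy that is not a limit cycle (necessarily built from multi-saddle separatrices), and (iv) an interior heteroclinic multi-saddle separatrix not part of such a circuit. I would remove these in turn by four disjointly supported $C^r$-small perturbations: (1) split each $k$-multi-saddle into $k$ regular saddles by a bump perturbation, noting that no $0$-saddle can appear because $v$ has no fake saddles and that the attracting/repelling index sum is untouched; (2) in an annular collar of each non-hyperbolic limit cycle, apply the standard perturbation to make it topologically hyperbolic, which is possible because the no-fake-cycle hypothesis guarantees the two sides have the same stability type; (3) near each non-cycle non-trivial circuit with holonomy, break the multi-saddle connections so that the non-trivial first-return map yields exactly one nearby hyperbolic limit cycle, exchanging one non-cycle circuit for one cycle and thereby preserving $l$; (4) break each remaining interior heteroclinic saddle-to-saddle separatrix by a Palis-type local perturbation. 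By Theorem~\ref{main:d} the resulting flow is Morse-Smale, and each step leaves the flow inside $\mathcal{Q}_{k/2,l,**}^r(S)$.

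The hard part will be Step (3): I must show that destroying a saddle-connection circuit with non-trivial holonomy produces exactly one hyperbolic closed orbit in a tubular neighborhood (neither several nor none), and that this local modification can be coordinated with the multi-saddle splittings of Step (1) at the circuit's vertices. My approach would be to set up a flow-box normal form along the circuit in which the non-identity first-return map $f_v \colon I \to J$ is visible, and then apply a graph-transform / contraction-mapping argument showing that a small perturbation which destroys the circuit but preserves the contracting (or dilating) character of $f_v$ forces a unique nearby hyperbolic fixed point of the perturbed return map, i.e.\ a hyperbolic limit cycle. The remainder of the argument would be bookkeeping to verify that indices, the total limit-circuit count, and the exclusions of fake saddles and fake limit cycles are preserved at every step.
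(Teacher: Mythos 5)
Your density argument follows essentially the same route as the paper: the paper deduces Theorem~\ref{th:open_dense_MS} from the topological characterization of Morse--Smale flows (Theorem~\ref{main:d}) together with Lemma~\ref{m2} (regularity is open dense, with topological types of singular points preserved) and Lemmas~\ref{m4}--\ref{m45} (interior multi-saddle separatrices can be removed by small perturbations, and their absence is open once fake limit cycles are excluded). Your steps (1), (3) and (4) correspond to the splittings of Figure~\ref{split_multi_saddle} and the perturbations of Figures~\ref{perturbation} and~\ref{perturbation02}; your step (2) is actually vacuous in this class, since $\mathcal{Q}^r_{k/2,l,**}(S)$ excludes fake limit cycles by definition and the paper records that a limit circuit is topologically hyperbolic if and only if it is not fake. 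Your worry in step (3) about producing exactly one limit cycle is legitimate and is the point the paper treats most informally (via the figures); your proposed normal-form argument is a reasonable way to make it precise, bearing in mind that the first-return map here is only continuous and monotone, so the uniqueness of the created cycle should come from how you construct the perturbation rather than from a contraction-mapping principle applied to the unperturbed map.

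The genuine gap is in your openness argument. You invoke structural stability of Morse--Smale flows, but the elements of $\mathcal{MS}^r_{k/2,l}(S)$ are $C^r$ flows that are merely \emph{topologically equivalent} to flows of Morse--Smale vector fields; their singular points and limit cycles are only topologically hyperbolic, not hyperbolic in the differentiable sense. Such flows are not structurally stable: a topologically attracting but degenerate sink (e.g.\ the flow of $-\nabla(x^4+y^4)$) can split into several singular points under an arbitrarily small $C^r$ perturbation. The paper states this explicitly: ``the non-degeneracy of $C^1$ Morse--Smale flows is not an open condition in general.'' This is precisely why the class $\mathcal{Q}^r_{k/2,l,**}(S)$ fixes the index sum $k/2$ and forbids fake saddles and fake limit cycles: openness is then recovered not from structural stability but from the index bookkeeping of Lemma~\ref{m2} (within the class, the sum of positive indices is pinned at $k/2$, and since the possible index values $1$, $1/2$, $-1$, $-1/2$ distinguish the regular types, no singular point can change type under a small perturbation) combined with the openness of the condition $\mathrm{P}_{\mathrm{ms}}(v)\subset\partial S$ established in Lemma~\ref{m45}. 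You need to replace your appeal to structural stability by an argument of this kind that works inside the restricted class.
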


Put $\mathcal{MS}_{k/2,l}^r(S)^c := \mathcal{Q}_{k/2,l,**}^r(S) - \mathcal{MS}_{k/2,l}^r(S)$.
We generalize $h$-unstability and $p$-unstabiunstabilitylity to Morse-Smale-like flows.
A regular flow in $\mathcal{MS}_{k/2,l}^r(S)^c$ is $h$-unstable if there is exactly one multi-saddle separatrix outside of the boundary $\partial S$.
As above, any $h$-unstable Morse-Smale-like flows are $\Omega$-stable.
A flow $v \in \mathcal{MS}_{k/2,l}^r(S)^c$ with $\mathrm{P}_{\mathrm{ms}}(v) \subset \partial S$ is $p$-unstable if all singular points except one $1$-$\partial$-saddle are topologically hyperbolic.
As the gradient flow case, generic time evolutions of $C^1$ ``gradient flows with limit cycles'' can be described in $\mathcal{Q}^1_{k/2,l}(S)$.

\begin{main}\label{main:f}
For any $r \in \mathbb{Z}_{>0}$, the $h$-unstable flows and $p$-unstable flows in $\mathcal{MS}_{k/2,l}^r(S)^c$ forms an open dense subset in $\mathcal{MS}_{k/2,l}^r(S)^c$.
\end{main}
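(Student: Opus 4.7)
The plan is to reduce Theorem~\ref{main:f} to its gradient counterpart Theorem~\ref{main:e} via the collapse construction of Theorem~\ref{main:qausi-MS}. Given any $v \in \mathcal{Q}^r_{k/2,l,**}(S)$, Theorem~\ref{main:qausi-MS} guarantees that $v$ is quasi-Morse-Smale, so the collapsed flow $v_{\mathrm{col}}$ on $S_{\mathrm{col}}$ is a gradient flow. Since the collapse only contracts limit cycles, it preserves multi-saddles, their separatrices, and the attracting/repelling singular points, and thus $v_{\mathrm{col}} \in \mathcal{G}^r_{k/2,*}(S_{\mathrm{col}})$. Moreover, the assumption that $v$ has no fake limit cycles implies that each limit cycle of $v$ is topologically hyperbolic. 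Combining Theorems~\ref{main:d} and~\ref{main:Morse}, $v$ is Morse-Smale if and only if $v_{\mathrm{col}}$ is Morse, and $v$ is $h$-unstable (resp.\ $p$-unstable) if and only if $v_{\mathrm{col}}$ is $h$-unstable (resp.\ $p$-unstable), since the defining data live away from the collapsed loci.

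For openness of the union of $h$-unstable and $p$-unstable flows in $\mathcal{MS}_{k/2,l}^r(S)^c$, both conditions are preserved under $C^r$-small perturbations: $h$-unstability forces $\Omega$-stability (as remarked before Theorem~\ref{main:e}), which provides persistence of the single non-boundary multi-saddle separatrix; $p$-unstability follows from local persistence of the pinched $1$-$\partial$-saddle structure together with hyperbolicity of every other singular point. The topological hyperbolicity of each limit cycle further guarantees that the count $l$ of limit cycles is preserved under sufficiently small $C^1$-perturbations and that no fake limit cycles are created.

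For density, fix $v \in \mathcal{MS}_{k/2,l}^r(S)^c$. By the equivalence above, $v_{\mathrm{col}} \in \mathcal{M}_{k/2}^r(S_{\mathrm{col}})^c$. Applying Theorem~\ref{main:e} to $v_{\mathrm{col}}$ produces arbitrarily $C^r$-close $h$- or $p$-unstable gradient flows $\tilde{v}_{\mathrm{col}}$ on $S_{\mathrm{col}}$. I would then pull $\tilde{v}_{\mathrm{col}}$ back through the inverse of the collapse: inside a small invariant annular neighborhood of each limit cycle of $v$, keep the flow identical to $v$; outside, splice in the perturbation $\tilde{v}_{\mathrm{col}}$ transported through the collapse identification $p_{\mathrm{col}} \circ p_{\mathrm{me}}^{-1}$. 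The resulting flow $\tilde{v}$ is $C^r$-close to $v$, lies in $\mathcal{Q}^r_{k/2,l,**}(S)$, and inherits the $h$- or $p$-unstable structure of $\tilde{v}_{\mathrm{col}}$.

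The main obstacle is making the pull-back step rigorous: one must define the lift of perturbations through the collapse in a manner continuous in the $C^r$ topology, while preserving the combinatorial data of multi-saddles, their separatrices, and the limit cycles. This relies on persistence of topologically hyperbolic limit cycles under $C^1$ (hence $C^r$ for $r \geq 1$) perturbations, together with the existence of canonical invariant annular neighborhoods on which a $C^r$-smooth bump-function construction can splice the perturbation of $v_{\mathrm{col}}$ on the complement of $\Gamma_{\mathrm{col}}$ with the unchanged limit cycle dynamics. A secondary technical point is verifying that no fake limit cycles or fake saddles appear in $\tilde{v}$; this is ensured by the smallness of the perturbation together with the rigidity of the spliced annular regions around the topologically hyperbolic limit cycles.
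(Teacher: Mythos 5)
Your reduction-via-collapse strategy is genuinely different from the paper's argument (which works directly on $S$, splitting non-hyperbolic multi-saddles by Lemma~\ref{lem:multi-saddle_ms} and then annihilating all but one interior multi-saddle separatrix as in the proof of Lemma~\ref{m4}), but it has gaps that go beyond the technical pull-back issue you flagged. First, the collapse does \emph{not} preserve the class $\mathcal{G}^r_{k/2,*}$: by construction each collapsed limit circuit produces one or two new singular points that are sinks or sources (see the proof of Lemma~\ref{lem:col}), so the sum of indices of attracting/repelling singular points of $v_{\mathrm{col}}$ is strictly larger than $k/2$ whenever $l>0$. Your claim that $v_{\mathrm{col}}\in\mathcal{G}^r_{k/2,*}(S_{\mathrm{col}})$ is therefore false as stated, and any appeal to Theorem~\ref{main:e} must be made in a different index class. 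Second, and more seriously, the asserted equivalence ``$v$ is $h$-unstable iff $v_{\mathrm{col}}$ is $h$-unstable'' fails: the $h$-unstable flows in $\mathcal{MS}_{k/2,l}^r(S)^c$ include those whose unique interior multi-saddle separatrix is a homoclinic saddle separatrix or lies in a non-periodic limit circuit (these are exactly the transitions listed in Figure~\ref{trans_limit_all}). Such a separatrix belongs to $\Gamma$ and is removed by the cut-and-collapse, so $v_{\mathrm{col}}$ can be Morse while $v$ is not Morse-Smale. Consequently your density argument, which produces approximants only by perturbing $v_{\mathrm{col}}$, cannot reach this entire family of $h$-unstable flows, and your openness argument for them cannot be inherited from the gradient picture either.

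Finally, the obstacle you yourself identify is not a ``secondary technical point'' but an unresolved essential step: $S_{\mathrm{col}}$ is obtained by a metric completion followed by a topological collapse, so it carries no canonical smooth structure for which $v\mapsto v_{\mathrm{col}}$ is continuous in the $C^r$ topology, and Theorem~\ref{main:e} gives no control on the support of its perturbations relative to the collapsed points. Without that control your splicing construction near the limit cycles is not defined. The paper's proof sidesteps all three problems by never leaving $S$: the dichotomy of Corollary~\ref{cor:Morse_dic_ms} reduces everything to local surgeries (splitting multi-saddles, annihilating interior separatrices) that are manifestly realizable by arbitrarily small $C^r$-perturbations within $\mathcal{Q}^r_{k/2,l,**}(S)$, and openness follows from the invariance of the index sum and of the interior multi-saddle separatrix count under small perturbations.
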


Moreover, we list all ``generic'' intermediate flows between Morse-Smale flows.

\begin{main}\label{th:generic_MS_trans}
For any $r \in \mathbb{Z}_{>0}$, The transitions between Morse-Smale flows in $\mathcal{Q}^r_{k/2,l,**}(S)$ listed as in Figure~\ref{trans_all}--\ref{trans_limit_all} forms an open dense subset in $\mathcal{MS}_{k/2,l}^r(S)^c$.
\begin{figure}
\begin{center}
\includegraphics[scale=0.2]{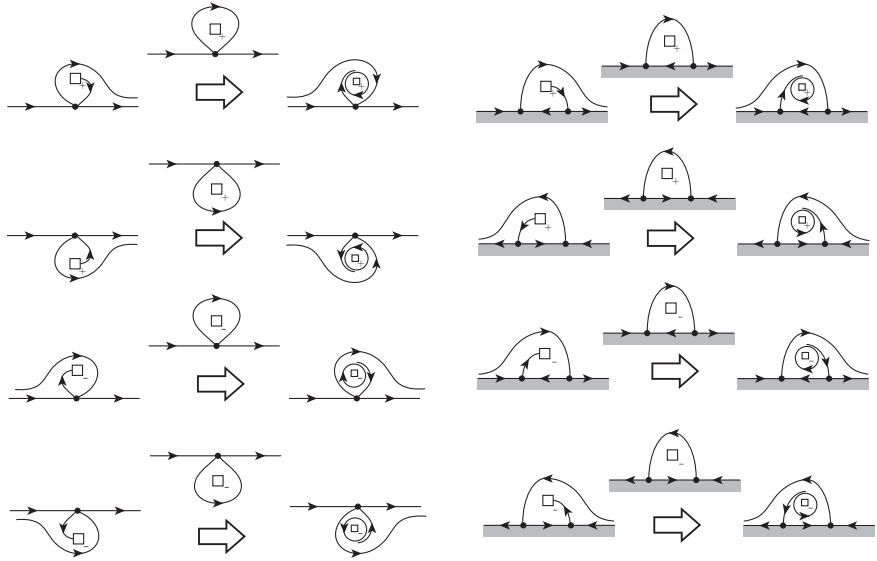}
\end{center}
\caption{Left, all homoclinic separatrices of saddles in $h$-unstable flows; right, all heteroclinic separatrices of $\partial$-saddles in limit circuits of $h$-unstable flows.}
\label{trans_limit_all}
\end{figure}
\end{main}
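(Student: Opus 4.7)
The plan is to deduce Theorem~\ref{th:generic_MS_trans} from Theorem~\ref{main:f} together with a local classification of $h$-unstable and $p$-unstable flows that extends the gradient analysis of Theorem~\ref{th:generic_gradient_trans}. Since Theorem~\ref{main:f} already asserts that the union of $h$-unstable and $p$-unstable flows is an open dense subset of $\mathcal{MS}_{k/2,l}^r(S)^c$, it suffices to verify that every such flow realizes one of the local configurations depicted in Figures~\ref{trans_all}--\ref{trans_limit_all}, and conversely that each listed configuration occurs on an open set of $\mathcal{MS}_{k/2,l}^r(S)^c$.

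For the $p$-unstable case, the defining condition $\mathrm{P}_{\mathrm{ms}}(v) \subset \partial S$ together with topological hyperbolicity of all non-exceptional critical elements forces the pinching at the unique $1$-$\partial$-saddle to be decoupled from any limit cycle in the interior. Hence the enumeration of pinching structures done in the course of Theorem~\ref{th:generic_gradient_trans} transfers verbatim and produces exactly the two models on the right of Figure~\ref{trans_all}.

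For an $h$-unstable flow $v$, let $\gamma$ denote the unique multi-saddle separatrix lying outside $\partial S$; quasi-regularity forces both endpoints of $\gamma$ to be ($\partial$-)saddles. I would split into two subcases depending on whether $\gamma$ is disjoint from every limit circuit. If $\gamma$ avoids the limit circuits, then Theorem~\ref{main:qausi-MS} together with the collapse construction $p_{\mathrm{col}} \circ p_{\mathrm{me}}$ produces a quasi-regular gradient flow $v_{\mathrm{col}}$ on $S_{\mathrm{col}}$ in which the image of $\gamma$ remains the unique interior multi-saddle separatrix. Applying Theorem~\ref{th:generic_gradient_trans} to $v_{\mathrm{col}}$ and lifting back recovers the four heteroclinic models on the left of Figure~\ref{trans_all}. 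If instead $\gamma$ lies inside a limit circuit, then the holonomy picture of Figure~\ref{hol} combined with the prohibition on fake limit cycles leaves only two admissible local models: either $\gamma$ is a homoclinic loop of an interior saddle whose one-sided return map degenerates into a limit cycle (left of Figure~\ref{trans_limit_all}), or $\gamma$ is a heteroclinic arc joining two $\partial$-saddles along the limit circuit (right of Figure~\ref{trans_limit_all}).

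The hard part will be the classification in the second subcase, where one has to rule out parasitic configurations produced by the interaction of the first return map along the limit circuit with the hyperbolic sectors at the endpoints of $\gamma$. The key inputs are the non-existence of fake limit cycles built into $\mathcal{Q}^r_{k/2,l,**}(S)$, the quasi-regular restriction on sectorial decompositions at multi-saddles, and the fixed index sum $k/2$, which together forbid any additional self-intersections or branchings of $\gamma$ beyond those depicted. Openness of each listed transition then follows from structural stability of the remaining topologically hyperbolic critical elements on the complement of a neighborhood of $\gamma$, and density is inherited from Theorem~\ref{main:f}.
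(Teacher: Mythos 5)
Your proposal is correct and follows essentially the same route as the paper: open density is imported from Theorem~\ref{main:f}, and the list is obtained by a case analysis on $h$-unstable versus $p$-unstable flows, separating the heteroclinic separatrices outside limit circuits (Figure~\ref{trans_all}) from the homoclinic saddle separatrices and the heteroclinic $\partial$-saddle separatrices inside limit circuits (Figure~\ref{trans_limit_all}). The paper's own argument is just a terser version of this enumeration (and your explicit reduction of the limit-circuit-free case to Theorem~\ref{th:generic_gradient_trans} via the collapse is a reasonable way to fill in the detail the paper leaves implicit).
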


%
%

\subsection{Topological invariants of flows on surfaces}
There are several topological invariants of flows on surfaces.
For instance, a fundamental result of Morse theory says that gradient flows of Morse functions on closed surfaces are characterized by the set of saddles and their separatrices, which are finite directed graphs.
The Morse theory for gradient vector fields on compact manifolds is extended to an index theory for Smale flows on compact manifolds using Lyapunov graphs \cite{franks1985nonsingular}, which are generalizations of the quotient spaces of gradient functions.
Recall that a flow is Smale if (1) its chain recurrent set has a hyperbolic structure and its dimension is less than or equal to one; and (2) it satisfies the transverse condition.
In \cite{de1993lyapunov}, a characterization of Lyapunov graphs associated to smooth flows on surfaces is presented.
Moreover, Peixoto graph is a complete invariant for Morse flows.
%

A three-colour graph  graph is a complete invariant for Morse-Smale flows \cite{oshemkov1998classification}. %
The latter construction was also inspired by invariants of Hamiltonian systems with non-degenerate critical points (saddles or centers) on a two-dimensional closed manifold and of multi-dimensional integrable Hamiltonian systems. The theory of their classifying invariants was developed by Fomenko and his school, see \cites{Fomenko87, Bolsinov90, Fomenko91} and, in great detail, see \cite{bolsinov2004integrable}.

Oshemkov's construction of a three-coloured graph was generalized to the case of saddle-saddle connections, i.e. for flows that are not of Morse or Morse-Smale type \cite{kruglov2018topological}. An equipped multi-coloured graph is a complete invariant for $\Omega$-stable flows on closed surfaces, each of which is a ``Morse-Smale'' flow without the non-existence condition of heteroclinic separatrices.

Some other invariants for non-compact Hamiltonian case were constructed by Nikolaenko \cite{Nikolaenko20}. They generalized construction of $f$-graph suggested by Oshemkov \cite{Oshemkov94} for Hamiltonian systems with a finite number of Morse singular points to encode efficiently its semi-local singularities, i.e. systems in a small neighbourhood of a connected component of energy level that contains saddles. Such $f$-graphs were later generalized by himself \cite{Oshemkov10} to classify non-degenerate singularities of Hamiltonian systems with more that 1 degree of freedom.

As it turns out, another representation of $f$-graph in terms of some permutations has a very natural realization. Such invariants appear in a class of integrable billiards in piece-wise flat domains that are CW-complexes with permutations introduced and studied in recent years by Vedyushkina and Fomenko \cite{Fomenko19}. It was discovered that topological invariants of such systems often coincide with the invariants of systems of rigid body dynamics, mathematical physics or geometry.

In \cite{nikolaev2001non}, non-wandering flows with finitely many singular points on compact surfaces are classified up to a graph-equivalence by using a topological invariant, called a Conley-Lyapunov-Peixoto graph, which is equipped with rotation and weight functions.
In addition, it stated that an orbit complex (also called separatrix configuration) of a flow is also a complete invariant for the set of flows with ``finitely many  separatrices'' in the sense of Markus \cites{markus1954global,neumann1975classification,neumann1976global}.
These papers are referred more than one hundred papers, as mentioned in \cite{buendia2018markus}.
However, Buend{\'\i}a and L\'opez have pointed out that orbit complex is not a complete invariant for flows having  ``finitely many  separatrices''  in the sense of Markus (resp. finitely many singular points and no limit separatrices in the sense of Markus) \cite{buendia2018markus}.
In fact, one of their counterexamples is a toral flow which consists of one singular point and non-closed proper orbits.
In other words, they have shown that the Markus-Neumann theorem needs not work for the original setting using the simple counterexample.
However, it is shown that an orbit complex with additional some orders or a mild condition is complete for flows of finite type (e.g. Morse-Smale flows, Hamiltonian flows with finitely many singular points)  on compact surfaces \cite{yokoyama2017decompositions}.

To describe a complete invariant of Morse-Smale-like flows, we recall some concepts.
A periodic orbit is one-sided if it is either a boundary component of a surface or has a small neighborhood which is a M\"obius band.
For a Morse-Smale-like flow on a compact surface, denote by $\mathop{\mathrm{BD}}_{+}(v)$ the finite union of singular points, limit cycles, one-sided periodic orbits, and border separatrices.
Then $\mathop{\mathrm{BD}}_{+}(v)$ is the weakly border point set of $v$.
Notice that the definition $\mathop{\mathrm{BD}_+(v)}$ for a Morse-Smale-like flow is a reduction of one for a surface flow (see \cite{yokoyama2017decompositions}*{Lemma 7.1} for details).
Then we have the following observation for completeness.

\begin{main}\label{main:g}
The embedded directed graph $\mathop{\mathrm{BD}}_+(v)$  is a finite complete invariant of Morse-Smale-like flows on compact surfaces.
\end{main}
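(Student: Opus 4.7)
The plan is to establish both the finiteness of the invariant and its completeness as a topological-equivalence invariant of Morse-Smale-like flows on compact surfaces.

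Finiteness follows quickly from the definitions. A Morse-Smale-like flow has at most finitely many singular points and at most finitely many limit cycles, and each singular point is finitely sectored (hence contributes only finitely many border separatrices). Any one-sided periodic orbit that is not a limit cycle must be a boundary component of $S$, of which there are finitely many by compactness. Thus $\mathop{\mathrm{BD}}_+(v)$ is a finite embedded directed graph. The invariance direction is immediate: a topological equivalence $h\colon S\to S'$ carries singular points to singular points, limit cycles to limit cycles, one-sided periodic orbits to one-sided periodic orbits, and border separatrices to border separatrices, and preserves orbit directions, so $h$ restricts to an isomorphism of embedded directed graphs $\mathop{\mathrm{BD}}_+(v)\to\mathop{\mathrm{BD}}_+(w)$.

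The heart of the proof is completeness. Suppose $h_0\colon S\to S'$ is a homeomorphism inducing an isomorphism of the embedded directed graphs $\mathop{\mathrm{BD}}_+(v)\to\mathop{\mathrm{BD}}_+(w)$. I would first decompose $S-\mathop{\mathrm{BD}}_+(v)$ into its finitely many connected components $\{U_i\}$ and set $U'_i:=h_0(U_i)$. Because $v$ is Morse-Smale-like, every point in $U_i$ is non-recurrent, and since $U_i$ contains no border separatrices, no limit cycles, and no singular points, each orbit in $U_i$ has its $\alpha$- and $\omega$-limit sets contained in $\mathop{\mathrm{BD}}_+(v)$ (either a singular point, a limit cycle, a one-sided periodic orbit, or reaching $\partial S$ via a flow box). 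Using the reduction of the general border-point decomposition to the Morse-Smale-like case noted in \cite{yokoyama2017decompositions}*{Lemma 7.1}, each closure $\overline{U_i}$ is a ``trivial flow piece'': either a flow-box region bounded by pieces of $\mathop{\mathrm{BD}}_+(v)$, or an annular spiral region with a limit cycle (or one-sided periodic orbit) on one boundary. The topological conjugacy class of $(\overline{U_i},v|_{\overline{U_i}})$ rel boundary is therefore determined by the combinatorial structure of its boundary inside $\mathop{\mathrm{BD}}_+(v)$.

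The final step is to assemble the component-wise conjugacies into a global topological equivalence extending $h_0|_{\mathop{\mathrm{BD}}_+(v)}$. Within each pair $(\overline{U_i},v)$, $(\overline{U'_i},w)$, build a flow-preserving homeomorphism agreeing with (a suitable modification of) $h_0$ on the common boundary; these then glue across $\mathop{\mathrm{BD}}_+(v)$ to a homeomorphism $S\to S'$ conjugating $v$ and $w$. The principal obstacle lies in this gluing: the graph isomorphism pins down only the combinatorial identification of boundary edges, but the product structures approaching the boundary from inside each $U_i$ and $U'_i$ must be matched in a compatible way. I would handle this by first isotoping $h_0|_{\mathop{\mathrm{BD}}_+(v)}$ along each edge, each limit cycle, and each sector boundary (respecting the graph combinatorics and orbit directions) so that the induced holonomies and first-return correspondences on transverse arcs near $\mathop{\mathrm{BD}}_+(v)$ line up; once boundary data agree, standard flow-box extension arguments complete the proof.
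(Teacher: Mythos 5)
Your proposal follows essentially the same route as the paper: both decompose the complement of the border-point set $\mathop{\mathrm{BD}}_+(v)$ into finitely many trivial flow pieces (open invariant flow boxes and open annuli, via the structure results of \cite{yokoyama2017decompositions}) and then assemble a global topological equivalence from piecewise conjugacies matched along the graph. Your write-up is in fact somewhat more explicit than the paper's about the gluing step (aligning holonomies on transverse arcs before extending over flow boxes), which the paper leaves implicit.
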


The observation implies that generic gradient flows on surfaces can be described by the finite complete invariants and so that any generic time evaluation of gradient flows on surfaces can be described by sequences of such finite invariants.

\subsection{Overview of the paper}
The present paper consists of ten sections.
In the next section, as preliminaries, we introduce the notions of combinatorics, topology, and dynamical systems.
In \S 3, we characterized isolated singular points of gradient flows on surfaces.
In particular, Theorem~\ref{main:a} is proved.
In \S 4, we topologically characterize gradient flows with finitely many singular points and Morse flows on compact surfaces (i.e. Theorem~\ref{main:c} and Theorem~\ref{main:Morse} are proved).
In \S 5, we introduce an operation by cutting limit circuits and collapsing the new boundary components into singletons to characterize a regular Morse-Smale-like flow.
We show that a quasi-regular flow on a compact surface is a Morse-Smale-like flow without elliptic sectors if and only if it is a quasi-Morse-Smale flow with finitely many singular points (i.e.  Theorem~\ref{main:qausi-MS} is proved).
In \S 6, we topologically characterize Morse-Smale flows on compact surfaces (i.e. Theorem~\ref{main:d} is proved).
In \S 7, we show the openness and density of Morse(-Smale) flows on compact surfaces (i.e. Theorem~\ref{th:open_dense_M} and Theorem~\ref{th:open_dense_MS} are proved).
In \S 8, we characterize and list generic intermediate Morse-Smale-like flows between gradient flows (i.e. Theorem~\ref{main:e} and Theorem~\ref{main:e} are proved).
In \S 9, we characterize and list all generic intermediate Morse-Smale-like flows between Morse-Smale flows (i.e. Theorem~\ref{th:generic_gradient_trans} and Theorem~\ref{th:generic_MS_trans} are proved).
In the final section, we observe that the embedded directed graph $\mathop{\mathrm{BD}}_+$ is a complete invariant of Morse-Smale-like flows on compact surfaces (i.e. Theorem~\ref{main:g} is proved).

\section{Preliminaries}

\subsection{Notion of dynamical systems}
By a surface,  we mean a two dimensional manifold, that does not need to be orientable.
A flow is a continuous $\R$-action on a manifold.
From now on, we suppose that flows are on surfaces.
Let $v : \R \times S \to S$ be a flow on a surface $S$.
For $t \in \R$, define $v_t : S \to S$ by $v_t := v(t, \cdot )$.
For a point $x$ of $S$, we denote by $O(x)$ the orbit of $x$.
A positive (resp. negative) orbit of $x$ is $v(\R_{>0}, x)$ (resp. $v(\R_{<0}, x)$), denoted by $O^+(x)$ (resp. $O^-(x)$).
Recall that a point $x$ of $S$ is singular if $x = v_t(x)$ for any $t \in \R$ and is periodic if there is a positive number $T > 0$ such that $x = v_T(x)$ and $x \neq v_t(x)$ for any $t \in (0, T)$.
An orbit is closed if it is singular or periodic.
An orbit is proper if it is embedded.
Denote by $\mathop{\mathrm{Sing}}(v)$ the set of singular points and by $\mathop{\mathrm{Per}}(v)$ (resp. $\mathop{\mathrm{Cl}}(v)$, $\mathrm{P}(v)$) the union of periodic (resp. closed, non-closed proper) orbits.
Denote by $\overline{A}$ the closure of a subset $A$.
Recall that the $\omega$-limit (resp. $\alpha$-limit)  set of a point $x$ is $\omega(x) := \bigcap_{n\in \mathbb{R}}\overline{\{v_t(x) \mid t > n\}}$ (resp.  $\alpha(x) := \bigcap_{n\in \mathbb{R}}\overline{\{v_t(x) \mid t < n\}}$), where the closure of a subset $A$ is denoted by $\overline{A}$.
A point is wandering if there are its neighborhood $U$ and a positive number $N$ such that $v_t(U) \cap U = \emptyset$ for any $t > N$.
A point is non-wandering if it is not wandering (i.e. for any its neighborhood $U$ and for any positive number $N$, there is a number $t \in \mathbb{R}$ with $|t| > N$ such that $v_t(U) \cap U \neq \emptyset$).
Denote by $\Omega (v)$ the set of non-wandering points, called the non-wandering set.
A point $x$ is recurrent if $x \in \alpha(x) \cup \omega(x)$.
Denote by $\mathrm{R}(v)$ the set of non-closed recurrent points.
Notice that a point is recurrent if and only if it is not proper, and so that $S = \mathop{\mathrm{Cl}}(v) \sqcup \mathrm{P}(v) \sqcup \mathrm{R}(v)$, $\sqcup$ denotes a disjoint union.
Moreover, the union $\mathrm{P}(v)$ corresponds to the set of non-recurrent points.
For an orbit $O$, define $\omega(O) := \omega(x)$ and $\alpha(O) := \alpha(x)$ for some point $x \in O$.
Note that an $\omega$-limit (resp. $\alpha$-limit) set of an orbit is independent of the choice of point in the orbit.
A subset is saturated (or invariant) if it is a union of orbits.
For a closed invariant set $\gamma$, define the stable subset $W^s(\gamma) := \{ y \in S \mid \omega(y) \subseteq  \gamma \}$ and the unstable subset  $W^u(\gamma) := \{ y \in S \mid \alpha(y) \subseteq \gamma \}$.
If the stable (resp. unstable) subset is immersed, then it is called the stable (resp. unstable) manifold.

\subsubsection{Types of singular points}

A multi-saddle is a singular point with finitely many separatrices, as in Figure~\ref{multi-saddles}.
A $k$-$\partial$-saddle (resp. $k$-saddle) is an isolated singular point on (resp. outside of) $\partial S$ with exactly $(2k + 2)$-separatrices, counted with multiplicity.
Note that a singular point is a multi-saddle if and only if it is a $k$-saddle or a $k/2$-$\partial$-saddle for some $k \in \mathbb{Z}_{\geq 0}$.
In other words, a singular point is a multi-saddle if and only if it is a finitely sectored singular point whose sectors are hyperbolic.
Similarly, a singular point is a sink, a $\partial$-sink, a source, or a $\partial$-source if and only if it is a finitely sectored singular point whose sectors consist of exactly one parabolic sector.

\subsubsection{Topological hyperbolicity}
Recall that a periodic orbit on a surface is topologically hyperbolic if and only if it is a limit cycle which is either semi-attracting on each side or semi-repelling on each side, and that a singular point on a surface is topologically hyperbolic if and only if it is a saddle, a $\partial$-saddle, a sink, a $\partial$-sink, a source, or a $\partial$-source.
In other words, a closed orbit is topologically hyperbolic if and only if it is locally topologically equivalent to a hyperbolic orbit with respect to the flow generated by a smooth vector field.

\subsubsection{Topological equivalence}
A flow $v$ on a topological space $X$ is topologically equivalent to a flow $w$ on a topological space $Y$ if there is a homeomorphism $h \colon X \to Y$ whose image of any orbit of $v$ is an orbit of $w$ and which preserves the direction of the orbits.

\subsubsection{Gradient flows}
A flow is gradient if it is topologically equivalent to a flow generated by a gradient vector field.

\subsubsection{Circuit}
A trivial circuit is a singular point.
Recall that an annular subset is homeomorphic to an annulus.
An open annular subset $\mathbb{A}$ of a surface is a collar of an invariant subset $\gamma$ if there is a \nbd $U$ of $\gamma$ such that $\mathbb{A}$ is a connected component of the difference $U - \gamma$.
Note that an open annular subset $\mathbb{A}$ of a surface is a collar of a singular point $x$ if and only if the union $\mathbb{A} \sqcup \{ x \}$ is a neighborhood of $x$.
By a cycle or a periodic circuit, we mean a periodic orbit.
A circuit is either a trivial or non-trivial circuit.
Note that there are non-trivial circuits with infinitely many edges and that any non-trivial non-periodic circuit contains non-recurrent orbits.
A circuit $\gamma$ is a semi-attracting (resp. semi-repelling) circuit with respect to a small collar $\A$ if $\omega(x)  = \gamma$ (resp. $\alpha(x) = \gamma$) and $O^+(x) \subset \A$ (resp. $O^-(x) \subset \A$) for any point $x \in \A$.
Then $\A$ is called a semi-attracting (resp. semi-repelling) collar basin of $\gamma$.
A non-trivial circuit $\gamma$ is a limit circuit if it is a semi-attracting or semi-repelling circuit.
Note that a trivial circuit is semi-attracting (resp. semi-repelling) if and only if it is either a $\partial$-source or a source (resp. a $\partial$-sink or a sink).
A limit circuit is (topologically) hyperbolic if each separatrix is contained in two limit circuits counted with multiplicity and is either semi-attracting on each side or semi-repelling on each side.
We call that a limit cycle (resp. circuit) is a fake limit cycle (resp. circuit) if it is semi-attracting on a side and semi-repelling on another side.
Note that a limit circuit is not hyperbolic if and only if it is fake.
Moreover, the non-existence of fake limit cycles is necessary to fix the number of limit cycles under perturbations.
In fact, a pair of attracting limit cycle and a repelling limit cycle can be merged into a fake limit cycle $C$ as in Figure~\ref{fake_lc} which can be annihilated by arbitrarily small perturbations.
\begin{figure}
\begin{center}
\includegraphics[scale=0.285]{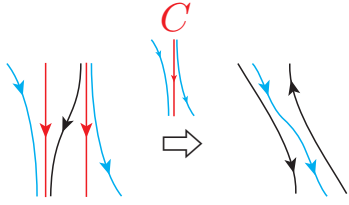}
\end{center}
\caption{A pair annihilation of an attracting limit cycle and a repelling limit cycle.}
\label{fake_lc}
\end{figure}%

\subsubsection{Flow box}
A non-degenerate arc is an orbit arc if it is contained in an orbit.
A subset $U$ is a flow box if there are bounded non-degenerate intervals $I, J$ and a homeomorphism $h \colon I \times J \to U$ such that the image $h(I \times \{ t \})$ for any $t \in J$ is an orbit arc as in Figure~\ref{flow-box}.
\begin{figure}
\begin{center}
\includegraphics[scale=0.4]{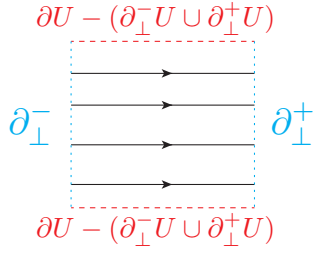}
\end{center}
\caption{An open flow box $U$ and a decomposition of its boundary}
\label{flow-box}
\end{figure}
A flow box is open if it is open.

\subsection{Some concepts on metric spaces}

Recall that the Hausdorff distance between nonempty subsets of a metric space as follows:
Let $(M, d)$ be a metric space.
Denote by $B_r(x)$ an open ball centered at a point $x \in M$ with radius $r>0$ (i.e. $B_r(x) := \{ y \in M \mid d(x,y) < r \}$).
Put $B_r(A) := \bigcup_{a \in A}B_r(a)$ for a subset $A \subseteq M$.
For a pair of nonempty subsets $A$ and $C$ of a metric space with the metric $d$, define the Hausdorff distance $d_H(A, C)$ between $A$ and $C$ by
$d_H(A,C) := \inf \{ \varepsilon > 0 \mid A \subseteq B_\varepsilon (C), C \subseteq B_\varepsilon (A) \}$.

\section{Characterizations of isolated singular points of gradient flows on surfaces}

We state the existence of a loop that is transverse at all points but finitely many points.

\begin{lemma}\label{almost_closed_transverse}
Let $v$ be a flow on a surface $S$.
Suppose that each singular point is isolated.
For any loop $\gamma'$, there is a loop $\gamma$ which is arbitrarily close to $\gamma$ with respect to the Hausdorff distance such that $\gamma$ is a loop which is smooth and which is transverse except finitely many tangencies.
\end{lemma}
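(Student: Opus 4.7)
The plan is to construct $\gamma$ in two stages: first produce a smooth loop close to $\gamma'$ that avoids $\Sv$, then perturb it inside flow-box charts to reduce its tangencies with $v$ to a finite set. Fix a smooth structure on $S$ and let $\varepsilon > 0$ be arbitrary. Since $\Sv$ is discrete (each singular point is isolated by hypothesis) and $\gamma'$ is compact, the neighborhood $B_\varepsilon(\gamma')$ meets $\Sv$ in only finitely many points once $\varepsilon$ is small enough. By Whitney's smooth approximation of continuous loops on a manifold, we obtain a smooth loop $\gamma_0$ with $d_H(\gamma_0,\gamma') < \varepsilon/2$, and since $\Sv$ is nowhere dense in $S$, a further arbitrarily small smooth perturbation of $\gamma_0$ achieves $\gamma_0 \cap \Sv = \emptyset$ while keeping $\gamma_0 \subset B_\varepsilon(\gamma')$.

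Each point of $\gamma_0$ is then a regular point of $v$, so by the flow-box theorem for continuous surface flows it lies in a topological flow box $U \cong I \times J$ in which orbits of $v|_U$ correspond to the horizontal slices $I \times \{t\}$. By compactness, cover $\gamma_0$ by finitely many such flow boxes $U_1, \dots, U_N$ via charts $h_j \colon I_j \times J_j \to U_j$. Composing each $h_j$ with a smoothing homeomorphism that is the identity near the boundary of $I_j \times J_j$, we may assume the charts are diffeomorphisms on a neighborhood of $\gamma_0 \cap U_j$; this operation preserves the purely topological notion of a curve crossing an orbit from one side to the other, hence preserves transversality in the flow-box sense.

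Inside each $U_j$, apply a $C^\infty$-small perturbation to $\gamma_0$, supported in a compact subset of $U_j$ and keeping $\gamma_0 \subset B_\varepsilon(\gamma')$, so that the set of points where the tangent to $\gamma_0$ has zero first coordinate in the local chart becomes finite. This is a standard consequence of Sard's theorem (equivalently, of elementary jet transversality for smooth planar curves): after a generic perturbation the first-coordinate tangent component has only isolated zeros. Assembling these local perturbations via a partition of unity subordinate to $\{U_j\}$ produces a smooth loop $\gamma$ with $d_H(\gamma,\gamma') < \varepsilon$, whose tangencies with $v$ form a finite union of finite sets and are therefore finite.

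The main technical subtlety is that $v$ is only assumed continuous, so ``transverse'' must be interpreted via flow-box charts rather than via a vector-field equation. This is handled by combining the topological flow-box theorem for continuous surface flows with the local smoothing of flow-box charts, which transports the transversality problem into the smooth category inside each box, where the straightforward Sard/Thom genericity argument completes the construction. A secondary point to watch is the compatibility of the box-by-box perturbations along the overlaps: this is why we support each local modification in the interior of $U_j$ and glue via a partition of unity, so that the pieces agree near the box boundaries and the final $\gamma$ is a globally smooth loop.
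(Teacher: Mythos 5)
There is a genuine gap, and it sits exactly at the point you flag as the ``main technical subtlety.'' After Gutierrez's smoothing theorem (which the paper invokes, and which you would also need) the flow is only $C^1$ and is generated by a merely \emph{continuous} vector field $X_v$; the flow-box theorem then provides charts $h_j\colon I_j\times J_j\to U_j$ that are only \emph{homeomorphisms}, and the orbit foliation near a regular point is only a $C^0$ foliation. Your step ``composing each $h_j$ with a smoothing homeomorphism \dots we may assume the charts are diffeomorphisms'' cannot be carried out: composing a homeomorphism with anything yields at best a homeomorphism, and any genuine smooth approximation of $h_j$ destroys the defining property that orbits are exactly the horizontal slices. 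Without a smooth chart in which the orbits form a smooth foliation, the Sard/jet-transversality argument has nothing to bite on: the tangency locus of a smooth curve $\gamma_0$ with $v$ is the zero set of the continuous function $t\mapsto\det\bigl(X_v(\gamma_0(t)),\gamma_0'(t)\bigr)$, and parametric transversality requires this to be $C^1$ in the perturbation and time parameters, i.e.\ requires $X_v$ to be $C^1$. Approximating $X_v$ by a smooth field $Y$ does not repair this, since near the finitely many $Y$-tangencies the determinant against $X_v$ may still vanish on an infinite (even Cantor) set. A secondary, fixable issue is the ``partition of unity'' gluing: one does not glue curve perturbations this way, and successive box-by-box perturbations preserve finiteness of tangencies only if the tangencies are made nondegenerate, which again presupposes smoothness of the foliation.

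The paper circumvents all of this with a combinatorial construction rather than a genericity argument: it covers the singularity-free approximating loop $\gamma_0$ by finitely many flow boxes with the special property that any two points of $\gamma_0$ in a box are joined inside the box by an arc that is \emph{either an orbit arc or a transverse arc}; it then extracts a finite chain of such boxes along $\gamma_0$, replaces each sub-arc of $\gamma_0$ by such a canonical arc to get a loop $\gamma_1$ made of finitely many orbit arcs and transverse arcs, replaces each orbit arc by a finite union of transverse arcs, and finally smooths the finitely many corners, each of which contributes at most one tangency. The finiteness of the tangency set thus comes from the finiteness of the cover, not from Sard. If you want to keep your strategy, you would have to first replace $v$ by a topologically equivalent \emph{smooth} flow (a strictly stronger smoothing statement than what is used or available here) or otherwise restructure the argument along the paper's lines.
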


\begin{proof}
By Gutierrez's smoothing theorem~\cite{gutierrez1978structural}, the flow $v$ is topologically equivalent to a $C^1$-flow and so is generated by a continuous vector field on $S$.
We may assume that $S$ is connected and $v$ is generated by a continuous vector field $X_v$.
Since $\mathop{\mathrm{Sing}}(v)$ is closed and zero-dimensional, the complement $S - \mathop{\mathrm{Sing}}(v)$ is a connected surface with $\overline{S - \mathop{\mathrm{Sing}}(v)} = S$.
Fix a loop $\gamma'$.
Let $d_H$ be a Hausdorff distance induced by a Riemannian metric $d$ of $S$ and $\varepsilon > 0$ a small positive number.
Since $\gamma'$ is compact, the intersection $\mathop{\mathrm{Sing}}(v) \cap \gamma'$ is finite.
By arbitrarily small perturbation, from the open denseness of $S - \mathop{\mathrm{Sing}}(v)$,  isolatedness of singular points implies that there is a loop $\gamma_0 \subset  B_{\varepsilon/4}(\gamma')$ with $d_H(\gamma', \gamma_0) < \varepsilon/4$ and $\gamma_0 \cap \mathop{\mathrm{Sing}}(v) = \emptyset$.
Since $\gamma_0 \cap \partial B_{\varepsilon/4}(\gamma') = \emptyset$, compactness of $\gamma_0$ and the boundary $\partial B_{\varepsilon/4}(\gamma')$ implies that the minimum $e:= \min \{ d(x,b) \mid x \in \gamma_0, b \in \partial B_{\varepsilon/4}(\gamma') \}$ is positive.
Put $e_1 := \min \{ e, \varepsilon/4 \}$.
Then $d_H(A, B) \leq \varepsilon/2$ for any point $x$ and any nonempty subsets $A$ and $B$ of $B_{e_1}(x)$.

We claim that for any point $c \in \gamma_0$, there is an open flow box $B$ with $c \in B$ whose intersection $B \cap \gamma_0$ is an open arc such that for any pair of points $a \neq b \in B \cap \gamma_0$ there is an open arc between them in $B$ which either is an orbit arc or is transverse.
Indeed,
if $\gamma_0$ is transverse at $c$, then there is an open flow box $B$ whose intersection $B \cap \gamma_0$ is an open transverse arc.
Thus we may assume that $\gamma_0$ is tangent at $c$.
Fix a direction on $\gamma_0$.
Denote by $(a,b)_{\gamma_0}$ the open sub-arc of $\gamma_0$ from a point $a \in \gamma_0$ to a point $b \in \gamma_0$.
The smoothness of $\gamma_0$ implies that we can parametrize $\gamma_0$ by $t \in (-\varepsilon, \varepsilon)$ near $c$ such that $g(X_v(\gamma_0(t)), \frac{d}{dt}\gamma_0(t)) >0$ near $c$, where $\varepsilon>0$ is some small positive number and $g$ is a Riemannian metric.
This means that $\gamma_0$ never return with respect to the flow direction.
Therefore there is an open flow box $B$ whose intersection $B \cap \gamma_0$ is an open arc with $(B \cap \gamma_0) \cap \partial_\pitchfork B = \emptyset$ as in Figure~\ref{flowbox}.
\begin{figure}
\begin{center}
\includegraphics[scale=0.4]{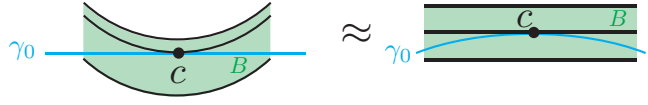}
\end{center}
\caption{An open flow box $B$ with $c \in B \cap \gamma_0$ whose intersection $B \cap \gamma_0$ is an open arc with $(B \cap \gamma_0) \cap \partial_\pitchfork B = \emptyset$.}
\label{flowbox}
\end{figure}
Moreover, for any pair of points $a \neq b \in B \cap \gamma_0$, there is an open transverse arc between $a$ and $b$ unless there is an open obit arc in $B$ between $a$ and $b$.
This completes the claim.

Therefore, for any point $x \in \gamma_0$, there is such an open flow box $B_x \subset B_{e_1}(x)$.
Then $\gamma_0 \subset B_{e_1}(\gamma_0) \subseteq B_{\varepsilon/4}(\gamma')$.
By compactness of $\gamma_0$, there are finitely many such flow boxes $B_1, \ldots , B_k$ such that $\gamma_0 \subset \bigcup_{i=1}^k B_i \subseteq B_{\varepsilon/4}(\gamma')$.
Fix a point $x_i$ with $x_i \in B_i = B_{e_i}(x)$.
Since $B_i \cap \gamma_0$ is an open directed arc, denote by $\omega_i$ (resp. $\alpha_i$) the positive (resp. negative) boundary of $B_i \cap \gamma_0$.
Put $j_1 := 1$.
By $\gamma_0 \subset \bigcup_{i=1}^k B_i$, there are an integer $i_1 = 1, \ldots , k$ with $i_1 \neq j_1$ and a small open \nbd $U_{j_1} \subset B_{i_1}$ of $\omega_{j_1}$.
Since $\omega_{j_1} \in B_{j_1} \cap \gamma_0$, fix a point $w_1 \in U_{j_1} \cap B_{j_1} \cap \gamma_0 \subset B_{i_1} \cap B_{j_1} \cap \gamma_0$.
By induction for $j_l$, define $w_l$ as follows:
Suppose that we have an open flow box $B_{i_{l-1}}$ and a small open \nbd $U_{j_{l-1}} \subset B_{i_{l-1}}$ of $\omega_{j_{l-1}}$ with $w_{l-1} \in U_{j_{l-1}} \cap B_{j_{l-1}} \cap \gamma_0 \subset B_{i_{l-1}} \cap B_{j_{l-1}} \cap \gamma_0$.
Put $j_l := i_{l-1}$.
By $\gamma_0 \subset \bigcup_{i=1}^k B_i$, there are an integer $i_l = 1, \ldots , k$ with $i_l \neq j_l$ and a small open \nbd $U_{j_l} \subset B_{i_l}$ of $\omega_{j_l}$.
Since $\omega_{j_l} \in B_{j_l} \cap \gamma_0$, fix a point $w_l \in U_{j_l} \cap B_{j_l} \cap \gamma_0 \subset B_{i_l} \cap B_{j_l} \cap \gamma_0$.

We claim that there is a natural number $n$ such that $\gamma_0 = (\alpha_{j_1}, \omega_{j_1})_{\gamma_0} \cup (\bigcup_{m=1}^n\{\omega_{j_m} \} \cup (\omega_{j_m}, \omega_{j_{m+1}})_{\gamma_0} )$.
Indeed, assume that there are no such natural numbers.
Then $\gamma_0 \cap B_{j_1} \subseteq (\alpha_{j_1}, \omega_{j_l})_{\gamma_0}$ and so $B_{j_1} \neq B_{j_l}$ for any $l \in \Z_{> 2}$.
Similarly, since $\gamma_0 \cap \bigcup_{m=1}^l B_{j_l} \subseteq (\alpha_{j_1}, \omega_{j_1})_{\gamma_0} \cup (\bigcup_{m=1}^l \{\omega_{j_m} \} \cup (\omega_{j_m}, \omega_{j_{m+1}})_{\gamma_0} ) = (\alpha_{j_1}, \omega_{j_{l+1}})_{\gamma_0}$, we have $B_{j_l} \neq B_{j_m}$ for any $l \in \Z_{> 2}$ and any $m \in \Z_{> l + 1}$, which contradicts that there are at most $k$ such open flow boxes $B_{j_m}$.

Therefore there are $w_1, \ldots , w_n$ with $\gamma_0 = (w_{n}, w_1)_{\gamma_0} \cup \bigcup_{m =1}^{n-1} (w_{m}, w_{m+1})_{\gamma_0}$ such that $(w_1, w_2)_{\gamma_0} \subseteq \gamma_0 \cap B_{i_2}$, \ldots , $(w_{n-1}, w_n)_{\gamma_0} \subseteq \gamma_0 \cap B_{i_n}$, and $(w_n, w_1)_{\gamma_0} \subseteq \gamma_0 \cap B_{i_1}$.
Then $w_2 \in B_{j_1} \cap B_{j_2}$, \ldots, $w_{n} \in B_{j_{n-1}} \cap B_{j_n}$,
and $w_1 \in B_{j_n} \cap B_{j_1}$.
Hence we can choose closed arcs $[w_m, w_{m+1}]_{\gamma_1}$ for any $m = 1, \ldots , n-1$ (resp $[w_{n}, w_1]_{\gamma_1}$) from $w_m$ (resp. $w_n$) to $w_{m+1}$ (resp. $w_1$) in $B_{j_{m+1}}$ (resp. $B_{j_1}$) which either is an orbit arc or is transverse.
Then the union $\gamma_1 := [w_{n}, w_1]_{\gamma_1} \cup \bigcup_{m=1}^{n-1} [w_m, w_{m+1}]_{\gamma_1}$ is a loop consists of finitely many closed orbit arcs and closed transverse arcs.
By $(w_m, w_{m+1})_{\gamma_0}, [w_m, w_{m+1}]_{\gamma_1} \subset B_{j_m} = B_{e_1}(x_{j_m}) \subseteq B_{\varepsilon/4}(x_{j_m})$, we obtain $d_H((w_m, w_{m+1})_{\gamma_0}, [w_m, w_{m+1}]_{\gamma_1}) \leq \varepsilon/2$ and so $d_H(\gamma_0, \gamma_1) \leq \varepsilon/2$.
Since any closed orbit arcs can be approximated by a finite union of closed transverse arcs, the loop $\gamma_1$ can be approximated by a loop $\gamma_2$ consisting of closed transverse arcs such that $d_H(\gamma_1 , \gamma_2) < \varepsilon/4$.
Then $\gamma_2$ is a loop that is piecewise smooth and which is transverse at all points but finitely many non-smooth points.
By arbitrarily small perturbation to $\gamma_2$, we may assume that $\gamma_2$ is smooth and transverse except finitely many tangencies with respect to $w$.
This implies that $d_H(\gamma', \gamma_2) < \varepsilon$ and the resulting loop $\gamma$ is desired.
\end{proof}

We describe saddle-type singular points of flows on surfaces.

\begin{lemma}\label{lem:almost_transverse}
Let $v$ be a flow on a surface, $x$ an isolated singular point which is not a topological center, $\gamma$ a simple closed curve which is transverse except finitely many tangencies, $B$ a closed disk with $\partial B = \gamma$ and $x \in \mathrm{int} B$.
Suppose that, for any point $y \in B$, the connected component of $O(y) \cap B$ containing $y$ is a closed orbit arc $C_y$ whose end points $\partial C_y$ are two distinct points in $\{ x \} \sqcup \partial B$.
Then $x$ is a finitely sectored singular point without elliptic sectors.
\end{lemma}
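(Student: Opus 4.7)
The proof splits into two parts matching the two conclusions. For the absence of elliptic sectors, the hypothesis directly forbids orbit arcs $C_y$ with both endpoints at $x$ (the two endpoints must be distinct and $\{x\}$ is a single point), and any elliptic sector at $x$ contained in $B$ would contain arbitrarily small such arcs. The same hypothesis forces $x$ to be the only singular point in $B$ and forbids any orbit from being periodic inside $B$ (a periodic orbit would make $C_y$ a whole circle without endpoints).

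For the finitely-sectored part, I would analyze the exit behavior of orbits on $\gamma$. The finitely many tangencies divide $\partial B$ into transverse arcs $I_1, \ldots, I_m$, each consistently inflow or outflow since the normal component of $v$ along $\gamma$ cannot change sign without a tangency. On each inflow arc $I$, define the exit time $\tau(y) := \sup\{t \geq 0 : v_{[0,t]}(y) \subseteq B\} \in (0, \infty]$. If $\tau(y) = \infty$, a Poincar\'e--Bendixson-type argument using uniqueness of $x$ in $B$ and absence of periodic orbits in $B$ forces $\omega(y) = \{x\}$. Continuous dependence on initial conditions and closedness of $B$ make $\tau$ lower semi-continuous, so $D_I := \tau^{-1}(\infty)$ is closed in $I$, and $\tau$ is continuous at each $y \in I \setminus D_I$ whose orbit exits $B$ transversely. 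A dual analysis on each outflow arc $O$ in backward time produces a closed set $D^-_O$ of outgoing separatrix endpoints.

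The finiteness of sectors reduces to showing each $D_I$ has only finitely many connected components: interval components correspond to parabolic sectors with the two extremes giving the border separatrices, while isolated points correspond to border separatrices of hyperbolic sectors. An accumulation point $z^*$ of incoming separatrix endpoints on $\partial B$ cannot have transverse finite-time exit (continuity of $\tau$ near such $z^*$ would contradict $\tau(y_n) = \infty$ for $y_n \to z^*$), so $z^*$ either lies in $D_I$ itself or its forward orbit hits one of the finitely many tangencies. The main obstacle is ruling out a Cantor-like structure on $D_I$; I would address this by tracking the cyclic order of border separatrix endpoints along $\partial B$, noting that each connected component of $D_I$ produces two outgoing border separatrix endpoints on outflow arcs (obtained from the exit map applied to adjacent gaps as $\tau \to \infty$). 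These endpoints, together with the dual contributions from $D^-_O$, must be organized compatibly with the finite inflow/outflow/tangency decomposition of $\partial B$, and this combinatorial constraint forces the number of components to be finite. Once the border separatrices are finite in number, they divide $B \setminus \{x\}$ into finitely many sectors, each parabolic or hyperbolic.
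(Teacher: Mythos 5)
Your reduction of the problem to the structure of the sets $D_I$ of separatrix feet on $\gamma$, and your handling of the elliptic-sector exclusion via the hypothesis that $\partial C_y$ consists of two \emph{distinct} points, both match the paper's strategy (the paper works with the single set $I\subseteq\gamma$ of points whose forward or backward orbit in $B$ tends to $x$, which is the union of your $D_I$ and $D_O^-$). However, the step you yourself flag as ``the main obstacle'' --- ruling out infinitely many components, e.g.\ a Cantor-like $D_I$ --- is exactly where your argument stops being a proof: the claim that the cyclic order of border-separatrix endpoints ``must be organized compatibly with the finite inflow/outflow/tangency decomposition'' and that ``this combinatorial constraint forces the number of components to be finite'' is asserted, not established. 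Nothing in cyclic-order bookkeeping alone prevents a Cantor set of separatrix feet from sitting inside a single inflow arc with its gaps mapping to a Cantor set of exit points on an outflow arc; you need a dynamical reason, not a combinatorial one.

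The paper supplies that reason with a trapping-region argument that you are missing. If $\mu=(a,b)$ is a gap of $I$ with $a,b\in I$ (say both $C_a$ and $C_b$ run forward to $x$) and $\mu$ contains \emph{no} tangency, then $\overline{\mu}$ is a transverse arc and the Jordan curve $\{x\}\cup C_a\cup C_b\cup\overline{\mu}$ bounds a sub-disk $B_\mu\subseteq B$ whose only non-transverse boundary pieces are orbit arcs and $x$; a forward orbit entering through $\mu$ can never leave $B_\mu$, hence (as $x$ is the only singular point in $B$ and there are no periodic orbits in $B$) it must tend to $x$, forcing $\mu\subseteq I$ --- a contradiction. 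So every gap of $I$ contains at least one tangency, and since there are finitely many tangencies, $I$ has finitely many components; a connectedness argument (covering a gap by the two open sets of points whose exit foot lies on either side) then shows each gap contains \emph{exactly} one tangency, after a preliminary perturbation of $\gamma$ removing inner tangencies, and each gap region $B_\mu$ is a hyperbolic sector while each component of $I$ yields a parabolic sector. In particular, in your formulation each $D_{I_j}$ restricted to a single tangency-free inflow arc is automatically connected, so the Cantor-set scenario never arises --- but this has to be proved by the trapping argument, and as written your proposal does not close this gap.
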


\begin{proof}
Suppose that $\gamma$ is a closed transversal.
Then either $\omega(y) = x$ for any point $y \in B$, or $\alpha(y) = x$ for any point $y \in B$.
This means that $x$ is either a sink or a source and so that $x$ is a finitely sectored singular point without elliptic sectors.

Thus we may assume that $\gamma$ is not a closed transversal.
Let $I$ be the union of intersections of $\gamma$ and orbits whose either $\omega$-limit limit sets are $x$ and positive orbits are contained in $B$ or $\alpha$-limit sets are $x$ and negative  orbits are contained in $B$.

We claim that the complement $\gamma - I$ is open in $\gamma$.
Indeed, we may assume that $\gamma \neq I$.
Fix a point $y \in I$.
For any point $w \in B = \gamma \sqcup \mathrm{int} B$, denote by $C_w$ the connected component of $O(w) \cap B$ containing $w$.
Since $x$ is an isolated singular point,  the connected component $C_y$ is an orbit arc connecting $\gamma$ and so compact.
By the flow box theorem for $C_y$ (cf. Theorem 1.1, p.45\cite{aranson1996introduction}), the complement $\gamma - I$ is open in $\gamma$.

By the finite existence of tangencies, the complement $\gamma - I$ consists of finitely many open intervals and so that $I$ consists of finitely many closed intervals.
By hypothesis, each point whose $\omega$- or $\alpha$-limit set is $x$ intersects $\gamma$.
This means that the union $\bigcup_{w \in I} \mathrm{int} C_w$ consists of finitely many parabolic sectors.
Therefore it suffices to show that the closure $\overline{\bigcup_{w \in \gamma - I}C_w}$ contains a finite union $H$ of hyperbolic sectors such that the union $(\{ x \} \sqcup H) \cup \bigcup_{w \in I}C_w$ is a \nbd of $x$.

Fix a connected component $\mu$ of $\gamma - I$.
Then $\mu$ is an open interval with the boundary $\{ y, z \} = \overline{\mu} \cap I$ for some $y, z \in I$.
By a perturbation of $\mu \subset \gamma$ as in Figure~\ref{elim_tangency} if necessary, we may assume that $C_y$ and $C_z$ have no tangencies.
\begin{figure}
\begin{center}
\includegraphics[scale=0.4]{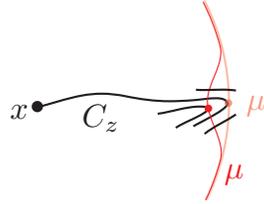}
\end{center}
\caption{A perturbation of $\mu \subset \gamma$.}
\label{elim_tangency}
\end{figure}
Let $B_\mu$ be the connected component of $B - \bigcup_{w \in I} C_w$ containing $\mu$.
Then $\partial B_\mu = \{ x \} \sqcup C_y \sqcup C_z \sqcup \mu$.
Since the boundary of any orbit arc $C_w$ for any $w \in \mu$ consists of two points of $\mu$, by a perturbation of $\mu \subset \gamma$ as in Figure~\ref{elim_inner_tangency} if necessary, we may assume that there are no inner tangencies.
\begin{figure}
\begin{center}
\includegraphics[scale=0.4]{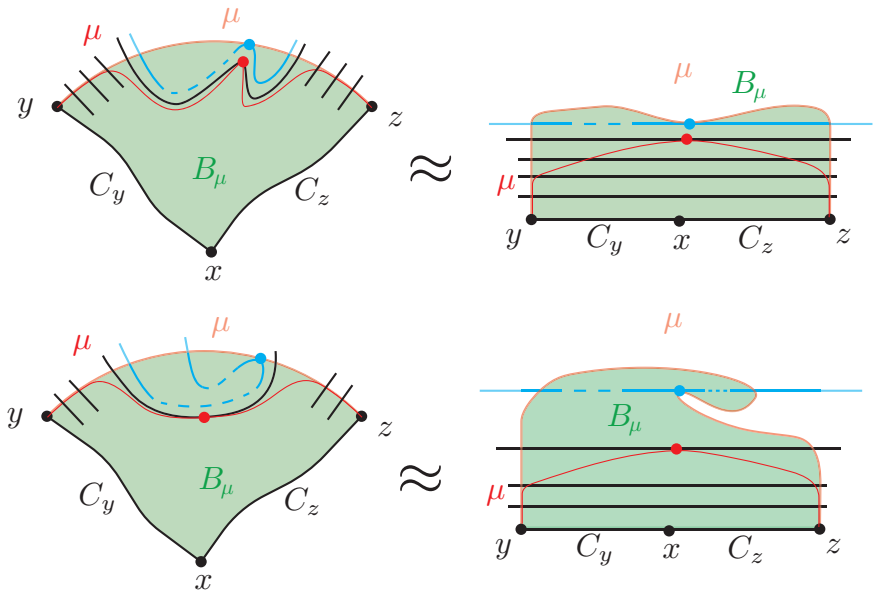}
\end{center}
\caption{Deformations of sectors to eliminate inner tangencies.}
\label{elim_inner_tangency}
\end{figure}
Thus $\mu$ contains only outer tangencies.

We claim that $\mu$ contains exactly one tangency.
Indeed, assume that $\mu$ contains no tangencies.
Then $\overline{\mu} = \mu \sqcup \{y,z \}$ is a closed transverse arc and so either $\{ x \} = \omega(y) = \omega(z)$ or $\{ x \} = \alpha(y) = \alpha(z)$.
By time reversing if necessary, we may assume that $\{ x \} = \omega(y) = \omega(z)$.
Since $\partial B_\mu = \{ x \} \sqcup C_y \sqcup C_z \sqcup \mu$, the positive orbit $O^+(w)$ for any point $w \in \mu$ is contained in $C_w$.
Since $\omega(w)$ is a singular point and $C_w \cap \Sv = \{ x \}$, we have $\omega(w) = \{ x \}$ and so $w \in I \cap \mu = \emptyset$, which is a contradiction.
Thus there are tangencies on $\mu$.
Assume that there are at least two tangencies $t_1$ and $t_2$.
Fix an orientation of $\mu$ with $y < z$.
We may assume that there are no tangencies on $\mu_{t_1, t_2}$ and $y < t_1 < t_2 < z$ as in Figure~\ref{two_tang}.
Denote by $\mu_{t,s}$ the open arc contained in $\mu$ between $t < s \in \mu$.
\begin{figure}
\begin{center}
\includegraphics[scale=0.4]{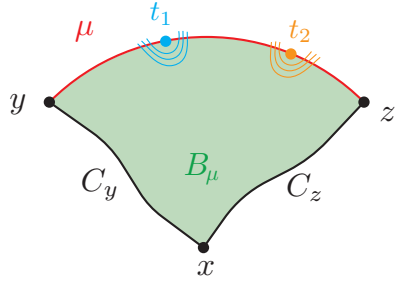}
\end{center}
\caption{Two successive tangencies on $\mu$.}
\label{two_tang}
\end{figure}
For $t \in \mu_{t_1, t_2}$, denote by $t'$ another boundary point of $C_t$ (i.e. $\{t, t' \} = \partial C_t$).
Since $t_i$ are outer tangencies, for any $t \in \mu_{t_1, t_2}$ near $t_1$ (resp. $t_2$), we have $t' \in \mu_{y,t_1}$ (resp. $t' \in \mu_{t_2,z}$).
Put $\mu_1 := \{ t \in \mu_{t_1, t_2} \mid t' \in \mu_{y,t_1}\}$ and $\mu_2 := \{ t \in \mu_{t_1, t_2} \mid t' \in \mu_{t_2,z}\}$
By the flow box theorem for $C_t$, both $\mu_i$ are open subsets.
Therefore $\mu_{t_1, t_2} \subset \mu \subset \gamma -I$ is an open interval which is covered by open subsets $\mu_1$ and $\mu_2$ and so is not connected, which contradicts that any open intervals on a plane are connected.

Let $t_0 \in \mu$ be the unique tangency contained in $\mu$.
For any point $t \in \mu - \{ t_0 \}$ and for any small non-degenerate closed interval $\mu_t$ in $\mu$ containing $t$, the union $\bigcup_{s \in \mu_t} C_s$ is a trivial closed flow box.
This means that the interior $\mathrm{int} B$ is a trivial open flow box and so the union $C_y \sqcup C_z \sqcup \mathrm{int} B$ is a hyperbolic sector.

For any connected component of $\gamma - I$, applying the perturbations finitely many times, we can obtain a simple closed curve $\gamma_1 \subset B$ which bounds an open disk that contains $x$ and consists of finitely many hyperbolic or parabolic sectors.
\end{proof}

\subsection{Proof of Theorem~\ref{main:a}}
%
Let $x$ be an isolated singular point of a gradient flow $v$ on a surface.
Since $v$ is gradient, a generalization of the Poincar\'e-Bendixson theorem for a flow with finitely many singular points (cf. Theorem 2.6.1~\cite{nikolaev1999flows}) implies that each of $\omega$-limit set and $\alpha$-limit set of a point is a singular point of $v$.
The gradient property implies that $\omega$-limit set and $\alpha$-limit set of any non-singular point are different from each other.
Since a gradient flow contains no centers, the singular point $x$ is not a center.
By Lemma~\ref{almost_closed_transverse}, there is a loop $\gamma$ which bounds a closed disk $B$ and which is transverse except finitely many tangencies such that $x \in \mathrm{int} B$ and $B \cap \Sv = \{ x \}$.
Since $v$ is gradient, for any point $y \in B$, the connected component of $O(y) \cap B$ containing $y$ is a closed arc such that the end points of the closed arc $C_y$ are two distinct points in $\{ x \} \sqcup \partial B$.
Applying Lemma~\ref{lem:almost_transverse} to $x$, the isolated singular point $x$ is a finitely sectored singular point without elliptic sectors.

\subsection{Isolated singular points in quasi-regular gradient flows}
We show the following equivalence.

\begin{corollary}\label{Morse01}
A gradient flow with finitely many singular points on a compact surface is quasi-regular if and only if no singular points have both a hyperbolic sector and a parabolic sector.
\end{corollary}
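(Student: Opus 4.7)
I would prove the two implications separately, with the forward one essentially definitional and the reverse one relying on Theorem~\ref{main:a} together with the no-recurrence property of gradient flows.

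For the forward direction, suppose $v$ is quasi-regular. By definition, each singular point is topologically equivalent to a multi-saddle, a center, or a ($\partial$-)sink or ($\partial$-)source (Figures~\ref{multi-saddles}, \ref{non-deg-sing}). Gradient flows admit no centers, and the remaining five local models are each characterized at the level of sectors by having either only hyperbolic sectors (the multi-saddles) or a single parabolic sector with no hyperbolic sectors (the sinks/sources and their boundary versions). Hence no singular point of $v$ carries both a hyperbolic and a parabolic sector.

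For the backward direction, fix a singular point $x$. By Theorem~\ref{main:a}, $x$ is non-trivial, finitely sectored, and has no elliptic sectors, so a sector decomposition at $x$ uses only hyperbolic and parabolic sectors; the hypothesis then forces all sectors at $x$ to be of a single kind. If all are hyperbolic, then $x$ is a multi-saddle by definition. The nontrivial case is when every sector at $x$ is parabolic, and the goal is to identify $x$ as a ($\partial$-)sink or ($\partial$-)source.

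In that case each parabolic sector $P$ has all of its orbits either $\omega$-asymptotic to $x$ (sink-type $P$) or $\alpha$-asymptotic to $x$ (source-type $P$); the gradient property forbids orbits with $\alpha(\cdot)=\omega(\cdot)=x$, so the type of $P$ is well-defined. Two adjacent parabolic sectors share a boundary orbit whose asymptotic relation to $x$ is intrinsic, so the sectors on both sides must carry the same type, since a mismatch would force the shared boundary orbit to be homoclinic to $x$, impossible in a gradient flow. Iterating around the finite cyclic decomposition at $x$, all parabolic sectors at $x$ share a single type, say sink. I would then invoke the tubular flow-box theorem off $x$, together with the finiteness of the sector decomposition, to produce a small simple closed curve (or transverse arc if $x\in\partial S$) that every nearby orbit crosses exactly once; the flow then parametrizes the enclosed region as the standard sink model, exhibiting the required topological equivalence. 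The main technical obstacle is precisely this final gluing: verifying that the union of the uniformly oriented parabolic sectors together with $x$ is topologically equivalent to a standard ($\partial$-)sink neighborhood, rather than some more degenerate singularity with the same sector counts. Once that is established, every singular point of $v$ fits one of the quasi-regular local models, and $v$ is quasi-regular.
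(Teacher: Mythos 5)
Your proposal is correct and follows essentially the same route as the paper: both directions reduce to Theorem~\ref{main:a} plus the sector-type characterizations of multi-saddles and ($\partial$-)sinks/sources. The paper's own proof is a three-line appeal to Theorem~\ref{main:a}, so your extra work in the backward direction (showing that a singular point whose sectors are all parabolic must have uniformly oriented sectors, hence is a genuine ($\partial$-)sink or ($\partial$-)source) is just an explicit filling-in of a step the paper leaves implicit, not a different argument.
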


\begin{proof}
Let $v$ be gradient flow on a compact surface.
Theorem~\ref{main:a} implies that each singular point is a non-trivial finitely sectored singular point without elliptic sectors.
Since a non-elliptic sector is hyperbolic or parabolic, the assertion holds.
\end{proof}

\section{Topological characterization of gradient flows with finitely many singular points and of Morse flows}

In this section, we characterize gradient flows with finitely many singular points and Morse flows.

\subsection{Proof of Theorem~\ref{main:c}}

By definitions, there are at most finitely many singular points in any case.
We may assume hat $S$ is connected.

Suppose that $v$ is a gradient flow.
Then there are neither cycles nor elliptic sectors.
Since gradient flows has no non-closed recurrent orbits, we have $S = \mathop{\mathrm{Sing}}(v) \sqcup  \mathrm{P}(v)$.
Theorem~\ref{main:a} implies that each singular point is a non-trivial finitely sectored singular point without elliptic sectors.
This means that $v$ is Morse-Smale-like without elliptic sectors or non-trivial circuits.

Suppose that $v$ is a Morse-Smale-like flow without elliptic sectors or non-trivial circuits.
By definition, we obtain $S = \mathop{\mathrm{Sing}}(v) \sqcup \mathop{\mathrm{Per}}(v) \sqcup \mathrm{P}(v)$.
The density of $\mathrm{P}(v)$ implies that $\mathop{\mathrm{Per}}(v)$ has no interior.
By \cite{yokoyama2017decompositions}*{Lemma~3.4}, each periodic orbit is a limit cycle, and so there are no periodic orbits.
This means that $S = \mathop{\mathrm{Sing}}(v) \sqcup \mathrm{P}(v)$.
Since there are no centers, the hypothesis implies that each singular point consists of  either hyperbolic or parabolic sectors.
Let $U$ be a connected component of the complement of $\mathop{\mathrm{BD}}_+(v)$.
By \cite{yokoyama2017decompositions}*{Theorem~7.7}, the component $U$ is either an open transverse annulus or an open invariant flow box.
A generalization of the Poincar\'e-Bendixson theorem for a flow with finitely many singular points implies that each of the $\omega$-limit set and the $\alpha$-limit set of a point is a singular point.
Therefore if $U$ is an open transverse annulus, then the closure is a sphere and the flow is a gradient flow with exactly one sink and one source.
The non-existence of elliptic sectors implies that the $\omega$-limit set and the $\alpha$-limit set of a point in $U$ are distinct.
Thus we may assume that each connected component of the complement of $\mathop{\mathrm{BD}}_+(v)$ is such an open invariant flow box.
By $S = \mathop{\mathrm{Sing}}(v) \sqcup \mathrm{P}(v)$, since the border set $\mathop{\mathrm{BD}}_+(v)$ is a directed graph without no non-trivial circuits, the border set $\mathop{\mathrm{BD}}_+(v)$ has a smooth height function on $S$ and can be extended into the surface because the set difference $S - \mathop{\mathrm{BD}}_+(v)$ is a finite disjoint union of invariant flow boxes.
Hence $v$ is a gradient.

%
%

\subsection{Characterization of Morse flows}

Recall Morse-Smale flows on compact surfaces.

\subsubsection{Morse-Smale flows on compact manifolds}
A $C^r$ vector field $X$ for any $r \in \Z_{\geq0}$ on a closed manifold is Morse-Smale if $(1)$ the non-wandering set $\Omega(X)$ consists of finitely many hyperbolic closed orbits; $(2)$ any point in the intersection of the stable and unstable manifolds of closed orbits are transversal (i.e $W^s(O) \pitchfork W^u(O')$ for any orbits $O, O' \subset \Omega(X)$, where $W^s(O)$ is the stable manifold of $O$ and $W^u(O')$ is the unstable manifold of $O'$).
Here the transversality of submanifolds $A$ and $B$ on a manifold $M$ means that the submanifolds $A$ and $B$ span the tangent spaces for $M$ (i.e. $T_{A\cap B} M = T_{A\cap B} A + T_{A\cap B} B$).
Similarly, under two generic conditions for differentials to guarantee structural stability, Labarca and Pacifico defined a Morse-Smale vector field on a compact manifold as follows \cite{labarca1990stability}.
A $C^\infty$ vector field $X$ on a compact manifold is Morse-Smale if it satisfies the following conditions: \\
$(\mathrm{MS}1)$ The non-wandering set $\Omega(X)$ consists of finitely many hyperbolic closed orbits;
\\
$(\mathrm{MS}2)$ The restriction $X|_{\partial M}$ is a Morse-Smale vector field on a closed manifold;
\\
$(\mathrm{MS}3)$ For any orbits $O, O' \subset \Omega(X)$ and for any non-transversal point $x \in W^s(O) \cap W^u(O')$, we have $x \in \partial M$ and $(O \cup O') \cap \mathop{\mathrm{Sing}}(X) \neq \emptyset$.
\\
Therefore we say that a flow is Morse-Smale if it is topologically equivalent to a flow generated by a vector field satisfying the conditions $(\mathrm{MS}1)$--$(\mathrm{MS}3)$.
A flow is Morse if it is a Morse-Smale flow without limit cycles.
Note that the two generic conditions for differentials form an open dense subset of the set of $C^\infty$ vector fields and that they are stated as follows: any closed orbit is $C^2$ linearizable, and the weakest contraction (resp. expansion) at any closed orbit is defined.
Here the weakest contraction at a singular (resp. periodic) point $p$ is defined if the contractive eigenvalue with the biggest real part among the contractive eigenvalues of $DX(p)$ (resp. $DX_f(p)$, where $X_f$ is  the Poincar\'e map) is simple. Dually we can define that the weakest expansion at $p$ is defined.

Palis and Smale showed that a Morse-Smale $C^r$ vector field on a closed manifold is structurally stable with respect to the set of $C^r$ vector fields~\cites{palis1969morse,palis2000structural}.
Similarly, the assertion also holds for Morse-Smale vector fields on compact manifolds under the $C^2$ linearizable condition and the eigenvalue conditions \cite{labarca1990stability}.
%
%
We have the following observation.

\begin{lemma}\label{MS_char}
A flow $v$ is a Morse-Smale flow on a compact surface $S$ if and only if it satisfies the following conditions:
\\
$(\mathrm{M}0)$ $\Omega(v) = \mathop{\mathrm{Cl}}(v)$.
\\
$(\mathrm{M}1)$ The non-wandering $\Omega(v)$ set consists of finitely many orbits.
\\
$(\mathrm{M}2)$ Each singular point is either a sink, a $\partial$-sink, a source, a $\partial$-source, a saddle or a $\partial$-saddle.
\\
$(\mathrm{M}3)$ Each periodic orbit is a topologically hyperbolic limit cycle.
\\
$(\mathrm{M}4)$ Each multi-saddle separatrix is contained in the boundary $\partial S$.
\end{lemma}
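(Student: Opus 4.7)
The plan is to establish the equivalence by reading conditions (M0)--(M4) as the topological content of (MS1)--(MS3) on surfaces, and conversely by building a smooth Morse-Smale vector field realizing any flow satisfying (M0)--(M4).

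For the forward direction, assume $v$ is topologically equivalent to the flow of a vector field $X$ satisfying (MS1)--(MS3). Since topological equivalence preserves $\Omega(\cdot)$, $\mathop{\mathrm{Cl}}(\cdot)$, the topological type of each closed orbit, and the notion of a separatrix, it suffices to check (M0)--(M4) for $X$. Condition (MS1) directly yields (M1) and, combined with the trivial inclusion $\mathop{\mathrm{Cl}}(X) \subseteq \Omega(X)$, also gives (M0). Because $\dim S = 2$, a hyperbolic singular point of $X$ must be one of the types enumerated in (M2), and a hyperbolic periodic orbit on a surface is topologically hyperbolic in the sense recalled in Section~2, which gives (M3). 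For (M4), if $\gamma$ is a separatrix from a multi-saddle $p$ to a multi-saddle $q$, then $\gamma \subseteq W^u(p) \cap W^s(q)$; both manifolds are one-dimensional and their intersection contains the one-dimensional arc $\gamma$, so the intersection is non-transverse along $\gamma$, and (MS3) then forces $\gamma \subseteq \partial S$.

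For the backward direction, assume (M0)--(M4). By Gutierrez's smoothing theorem we may assume $v$ is generated by a $C^1$ vector field. Around each topologically hyperbolic closed orbit of $v$, whether a singular point of a type listed in (M2) or a topologically hyperbolic limit cycle given by (M3), we replace the local flow by a smooth hyperbolic linear model of the same topological type, which exists because every such topological type is realized by a smooth hyperbolic vector field on a disk or annulus. On the complement of $\Omega(v) = \mathop{\mathrm{Cl}}(v)$ the flow admits local flow-box charts, and a partition of unity subordinate to a finite cover by these neighborhoods glues the pieces into a smooth vector field $X$ topologically equivalent to $v$ whose non-wandering set is a finite union of hyperbolic closed orbits, which is (MS1). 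Since $\dim \partial S = 1$, condition (MS2) reduces to the restriction of (M1)--(M2) to $\partial S$, with saddle-to-saddle connections on a $1$-manifold being vacuous. For (MS3), condition (M4) rules out non-transverse interior intersections along multi-saddle separatrices; any other non-transverse intersection of stable and unstable manifolds of hyperbolic closed orbits on a surface would share a one-dimensional orbit with at least one endpoint singular, and by (M4) such a shared orbit must lie in $\partial S$, so (MS3) holds.

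The main obstacle is the smooth construction in the backward direction: one must produce smooth local hyperbolic models matching each topologically hyperbolic singular point and limit cycle from its topological type alone, and then glue them to the smoothed flow on the wandering region without altering the global topological equivalence class. For singular points this relies on the topological classification implicit in (M2) together with standard smooth realizations of each local model, and for limit cycles on the smooth linearization of a topologically hyperbolic annulus flow. The gluing is routine because on $S \setminus \Omega(v)$ every orbit is proper and the region is covered by flow boxes whose transitions can be smoothed within the same topological equivalence class.
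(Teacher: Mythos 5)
The paper offers no proof of Lemma~\ref{MS_char}: it is introduced only with ``We have the following observation,'' so there is no argument of the authors' to compare yours against, and I can only assess your proposal on its own terms. Your forward direction is correct and complete: (MS1) gives (M0) and (M1) since $\mathop{\mathrm{Cl}}(X)\subseteq\Omega(X)$ always and (MS1) forces the reverse inclusion; hyperbolicity in dimension two yields (M2) and (M3); and your transversality count for (M4) is exactly right, since a separatrix between two (necessarily non-degenerate, by (M2)) saddles lies in the intersection of two one-dimensional invariant manifolds, which can never be transverse in a surface, so (MS3) pushes it into $\partial S$. This direction I would accept as written.

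The backward direction is the right strategy (Gutierrez smoothing, local replacement near the finitely many closed orbits, extension over the wandering region), but the gluing step as literally stated is a genuine gap. A convex combination $\rho X_1+(1-\rho)X_2$ of the smoothed ambient field $X_1$ and an independently chosen hyperbolic local model $X_2$ can vanish in the overlap annulus and can create new non-wandering behaviour, so ``a partition of unity subordinate to a finite cover\dots glues the pieces into a smooth vector field topologically equivalent to $v$'' does not follow as stated. What is missing is the matching step: one must first use the local structure of topologically hyperbolic closed orbits (the paper records in \S 2 that such an orbit is locally topologically equivalent to a hyperbolic one for a smooth field) to choose coordinates in which $X_2$ is positively proportional to $X_1$ on the gluing collar --- e.g.\ both transverse to the same family of cross-sections --- after which the interpolation is non-singular and orbit-preserving. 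One should also say explicitly why no new non-wandering points appear in the wandering region after interpolation (every orbit there is proper and crosses a fixed transverse section of a flow box, which survives small $C^0$ changes of direction). With that matching argument supplied, the rest of your sketch, including the reduction of (MS2) to the boundary restriction and the verification of (MS3) from (M4), goes through.
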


The previous lemma implies the following observation.

\begin{lemma}\label{grad_MS_char}
A gradient flow with finitely many singular points satisfies the conditions $(\mathrm{M}0)$, $(\mathrm{M}1)$, and $(\mathrm{M}3)$.
\end{lemma}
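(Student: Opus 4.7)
The plan is to reduce to the smooth case and then exploit the strict Lyapunov property of the defining function $f$. Each of the sets $\mathop{\mathrm{Cl}}(v)$, $\Omega(v)$, $\mathop{\mathrm{Per}}(v)$, $\mathop{\mathrm{Sing}}(v)$ and the property ``being a topologically hyperbolic limit cycle'' is invariant under topological equivalence of flows, so by the definition of a gradient flow we may replace $v$ by a smooth flow generated by $X = -\nabla f$ on the compact surface $S$ for some smooth function $f$, still with only finitely many singular points.

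The strict monotonicity $\frac{d}{dt} f(v_t(x)) = -|\nabla f(v_t(x))|^2 \le 0$, with equality only at singular points, immediately rules out periodic orbits. Hence $\mathop{\mathrm{Per}}(v) = \emptyset$, which makes $(\mathrm{M}3)$ vacuously true and gives $\mathop{\mathrm{Cl}}(v) = \mathop{\mathrm{Sing}}(v)$. For $(\mathrm{M}0)$, the inclusion $\mathop{\mathrm{Cl}}(v) \subseteq \Omega(v)$ is automatic; for the converse, fix a non-singular point $x$, pick any $t_0 > 0$, and set $\delta := f(x) - f(v_{t_0}(x)) > 0$. By continuity of $f$ and $v_{t_0}$, one finds an open neighborhood $U$ of $x$ such that $f(U) \subset (f(x) - \delta/3,\, f(x) + \delta/3)$ and $f(v_{t_0}(U)) \subset (-\infty,\, f(x) - 2\delta/3)$. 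Monotonicity of $f$ along orbits then forces $f(v_t(y)) < f(x) - 2\delta/3$ for every $y \in U$ and every $t \ge t_0$, so $v_t(U) \cap U = \emptyset$ for all such $t$, and hence $x$ is wandering. This yields $\Omega(v) \subseteq \mathop{\mathrm{Sing}}(v) = \mathop{\mathrm{Cl}}(v)$, i.e.\ $(\mathrm{M}0)$. Condition $(\mathrm{M}1)$ is then immediate, since $\Omega(v) = \mathop{\mathrm{Sing}}(v)$ is finite by hypothesis and every singular point constitutes a single orbit.

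There is no serious obstacle. The only points calling for a small amount of care are (i) the invariance of the non-wandering set under topological equivalence, which follows because the reparametrization of time along non-singular orbits preserves $|t|\to\infty$ while singular points are trivially non-wandering, and (ii) the observation that once $\mathop{\mathrm{Per}}(v)=\emptyset$ and $(\mathrm{M}0)$ are in place, $(\mathrm{M}1)$ is a one-line consequence of the finiteness hypothesis on $\mathop{\mathrm{Sing}}(v)$.
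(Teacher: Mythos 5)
Your proof is correct, and in fact it supplies more than the paper does: the paper records this lemma as an immediate observation following the list of conditions in Lemma~\ref{MS_char} and gives no argument at all, whereas elsewhere (in the proof of Theorem~\ref{main:a} and Theorem~\ref{main:c}) the authors lean on a Poincar\'e--Bendixson-type theorem to conclude that limit sets of a gradient flow are singular points. Your route via the strict Lyapunov property of $f$ is the standard one and handles $(\mathrm{M}0)$, $(\mathrm{M}1)$, $(\mathrm{M}3)$ cleanly; the only step I would tighten is point (i). The non-wandering set is indeed invariant under topological equivalence on a compact surface, but your one-line justification (``reparametrization of time preserves $|t|\to\infty$'') is not quite the right reason: wandering is a neighborhood condition, and the time reparametrization between equivalent flows is not uniform over nearby orbits (it can degenerate near singular points), so one has to argue via compactness and non-periodicity that return times cannot stay bounded. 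The cleaner fix, which keeps your $\delta/3$ computation verbatim, is to avoid transporting $\Omega$ altogether: set $g := f\circ h$ where $h$ is the equivalence onto the smooth gradient model; then $g$ is a continuous function on $S$ that is strictly decreasing along every non-singular orbit of the original flow $v$, and your neighborhood argument applied to $g$ shows directly that every non-singular point of $v$ is wandering. With that adjustment the proof is complete and self-contained.
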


\subsubsection{Topological characterization of Morse flows on compact surfaces}

We have the following statement.

\begin{lemma}\label{Morse}
The following are equivalent for a gradient flow $v$ with finitely many singular points on a compact surface:
\\
$(1)$ The flow $v$ is Morse.
\\
$(2)$ The flow $v$ is regular and each multi-saddle separatrix is contained in the boundary $\partial S$.
\\
$(3)$ Each singular point with hyperbolic sectors is either a saddle or a $\partial$-saddle and each multi-saddle separatrix is contained in the boundary $\partial S$.
\end{lemma}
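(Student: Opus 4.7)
The plan is to exploit Theorem~\ref{main:a} and the characterization of Morse-Smale flows in Lemma~\ref{MS_char} in order to cycle through the three conditions. Throughout, note that because $v$ is gradient there are no centers and no periodic orbits, so the only possible regular singular points are ($\partial$-)sinks, ($\partial$-)sources, and ($\partial$-)saddles, and condition $(\mathrm{M}3)$ holds vacuously.

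First I would show $(1) \Rightarrow (2)$. By definition a Morse flow is a Morse-Smale flow without limit cycles, so by Lemma~\ref{MS_char} each singular point is a ($\partial$-)sink, ($\partial$-)source, ($\partial$-)saddle, which is exactly regularity, and condition $(\mathrm{M}4)$ gives that every multi-saddle separatrix lies in $\partial S$.

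Next, $(2) \Rightarrow (3)$ is essentially a matter of sorting the regular singular points by their sector type. A sink, $\partial$-sink, source, or $\partial$-source has only parabolic sectors; a center has no sectors in the relevant sense (it is not finitely sectored in the hyperbolic/parabolic/elliptic sense), but the gradient hypothesis excludes centers anyway. Hence the only singular points with a hyperbolic sector are the ($\partial$-)saddles, giving (3).

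The main step is $(3) \Rightarrow (1)$, and this is where Theorem~\ref{main:a} does the work. Given (3), fix any singular point $x$. By Theorem~\ref{main:a}, $x$ is a non-trivial finitely sectored singular point without elliptic sectors, so its sectors are only hyperbolic or parabolic. If $x$ has a hyperbolic sector then by (3) it is already a ($\partial$-)saddle. If $x$ has no hyperbolic sector, then all its sectors are parabolic; since $x$ is non-trivial (has a neighborhood that is an open disk tiled by the point together with its sectors, with pairwise intersections in at most two orbit arcs), the only finitely sectored disk made purely of parabolic sectors is a ($\partial$-)sink or ($\partial$-)source, and we are done. In particular, no singular point simultaneously carries a hyperbolic and a parabolic sector, so Corollary~\ref{Morse01} applies and $v$ is quasi-regular; combined with the sector analysis this gives that $v$ is regular. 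Now I invoke Lemma~\ref{grad_MS_char} to get conditions $(\mathrm{M}0)$, $(\mathrm{M}1)$, $(\mathrm{M}3)$ from the gradient and finite-singular-point hypotheses, read off $(\mathrm{M}2)$ from the regularity just proved, and read off $(\mathrm{M}4)$ directly from (3). Lemma~\ref{MS_char} then concludes that $v$ is Morse-Smale, and since a gradient flow has no limit cycles, $v$ is Morse.

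The only point that requires a bit of care is the classification of a finitely sectored disk made entirely of parabolic sectors as a ($\partial$-)sink or ($\partial$-)source; this is where the ``no elliptic sectors'' conclusion of Theorem~\ref{main:a} and the non-triviality/finite-sector structure of the singular point neighborhood interact. Once that local picture is in hand, the rest of the argument is bookkeeping against Lemmas~\ref{MS_char} and \ref{grad_MS_char}.
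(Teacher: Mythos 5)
Your proposal is correct and follows essentially the same route as the paper: the cyclic implications $(1)\Rightarrow(2)\Rightarrow(3)\Rightarrow(1)$, with Theorem~\ref{main:a} supplying the sector structure of singular points and Lemma~\ref{MS_char} together with Lemma~\ref{grad_MS_char} supplying the Morse--Smale conditions $(\mathrm{M}0)$--$(\mathrm{M}4)$. The only difference is that you spell out the step the paper leaves implicit, namely that a non-trivial finitely sectored singular point without elliptic or hyperbolic sectors must be a ($\partial$-)sink or ($\partial$-)source, which is consistent with the characterization already recorded in the paper's preliminaries.
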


\begin{proof}
In any case, each multi-saddle separatrix is contained in the boundary of a surface, the condition $(\mathrm{M}4)$ is satisfied,  and there are no limit cycles.
Let $v$ be gradient flow on a compact surface $S$ such that each multi-saddle separatrix is contained in the boundary $\partial S$.
Theorem~\ref{main:a} implies that each singular point of $v$ is a non-trivial finitely sectored singular point without elliptic sectors.
Lemma~\ref{MS_char} implies that $v$ satisfies the conditions $(\mathrm{M}0)$, $(\mathrm{M}1)$, and $(\mathrm{M}3)$.

Suppose that $v$ is Morse.
Then each singular point is either a ($\partial$-)sink, a ($\partial$-)source, or a ($\partial$-)saddle.
This means that $v$ is regular.
Therefore the condition {\rm(1)} implies the condition {\rm(2)}.

Suppose that $v$ is regular.
Then there are at most finitely many singular points and each singular point is either a ($\partial$-)sink, a ($\partial$-)source, or a ($\partial$-)saddle.
This implies that each singular point with hyperbolic sectors is either a saddle or a $\partial$-saddle.

Suppose that each singular point with hyperbolic sectors is either a saddle or a $\partial$-saddle.
Then the condition $(\mathrm{M}4)$ holds.
This implies $v$ is Morse.
\end{proof}

Lemma~\ref{Morse} and Theorem~\ref{main:c} imply Theorem~\ref{main:Morse}

\section{Quasi-Morse-Smale flows}\label{qms_flow}

In this section, the equivalence for Morse-Smale-like property and quasi-Morse-Smale property for quasi-regular flows with finitely many limit cycles.
First, we have the following properties of $v_{\mathrm{col}}$.

\begin{lemma}\label{lem:col}
The following statements hold for a flow $v$ on a compact surface $S$:
\\
$(1)$ $S - \Gamma = S_{\mathrm{me}} - \Gamma_{\mathrm{me}} = S_{\mathrm{col}} - \Gamma_{\mathrm{col}}$.
\\
$(2)$ $v = v_{\mathrm{me}} = v_{\mathrm{col}}$ on the difference $S - \Gamma$.
\\
$(3)$ $\Gamma \subset  \mathop{\mathrm{Sing}}(v) \sqcup \mathrm{P}(v)$, $\Gamma_{\mathrm{me}} \subset  \mathop{\mathrm{Sing}}(v_{\mathrm{me}}) \sqcup \mathrm{P}(v_{\mathrm{me}})$, $\Gamma_{\mathrm{col}} \subset  \mathop{\mathrm{Sing}}(v_{\mathrm{col}}) \sqcup \mathrm{P}(v_{\mathrm{col}})$.
\\
$(4)$ $\mathrm{R}(v) = \emptyset$ if and only if $\mathrm{R}(v_{\mathrm{col}}) = \emptyset$.
\\
$(5)$ The flow $v$ has no singular points with elliptic sectors if and only if so does $v_{\mathrm{col}}$.
\end{lemma}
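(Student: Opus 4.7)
My plan is to dispose of items (1)--(4) essentially from the construction recalled just before the lemma, and to treat (5) as the only item requiring genuine local analysis. For (1), by the definition of the metric completion $S_{\mathrm{me}}$ of $S-\Gamma$ and of $\Gamma_{\mathrm{me}}:=p_{\mathrm{me}}^{-1}(\Gamma)$, the restriction $p_{\mathrm{me}}|\colon S_{\mathrm{me}}-\Gamma_{\mathrm{me}}\to S-\Gamma$ is a homeomorphism; since $p_{\mathrm{col}}$ collapses only (connected components of) $\Gamma_{\mathrm{me}}$, the same is true of $p_{\mathrm{col}}|\colon S_{\mathrm{me}}-\Gamma_{\mathrm{me}}\to S_{\mathrm{col}}-\Gamma_{\mathrm{col}}$. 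This gives (1), and (2) is exactly the compatibility condition built into the definitions of $v_{\mathrm{me}}$ and $v_{\mathrm{col}}$.

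For (3), the inclusion $\Gamma\subseteq\Sv\sqcup\mathrm{P}(v)$ is the very definition of $\Gamma$. For $v_{\mathrm{me}}$, each orbit of $\Gamma_{\mathrm{me}}$ projects via $p_{\mathrm{me}}$ to a singular point or a non-closed proper orbit of $v$, and since $p_{\mathrm{me}}|_{\Gamma_{\mathrm{me}}\setminus p_{\mathrm{me}}^{-1}(\Sv)}$ is an orbit-preserving immersion, the lifted orbits are of the same type. For $v_{\mathrm{col}}$, each limit circuit is collapsed to a point which by construction becomes singular for $v_{\mathrm{col}}$, so $\Gamma_{\mathrm{col}}\subseteq\mathop{\mathrm{Sing}}(v_{\mathrm{col}})$. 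Item (4) then falls out of (1)--(3): a non-closed recurrent point can lie neither in $\Gamma$ (whose orbits are singular or non-closed proper, hence non-recurrent) nor in $\Gamma_{\mathrm{col}}$ (whose orbits are singular), so by (2) it identifies with a recurrent point of the other flow on the common subset $S-\Gamma=S_{\mathrm{col}}-\Gamma_{\mathrm{col}}$.

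The substantive item is (5), which I expect to be the main obstacle. Singular points of $v_{\mathrm{col}}$ split into two classes: those in $S_{\mathrm{col}}-\Gamma_{\mathrm{col}}$, which correspond bijectively to singular points of $v$ in $S-\Gamma$ with identical local flows by (1)--(2) and hence have the same sector decomposition; and the new ones at collapsed limit circuits. For the latter I will pull a sector decomposition back via $p_{\mathrm{col}}\circ p_{\mathrm{me}}^{-1}$ to a collar basin of the limit circuit $c\subseteq\Gamma$; since $c$ has non-trivial holonomy and is semi-attracting or semi-repelling on each of its sides, every orbit in each collar has $c$ as an $\omega$- or $\alpha$-limit, which is precisely the local model of a parabolic sector at the collapsed point, not of an elliptic one. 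The hard part will be the case when several branches of $\Gamma$ share a common singular point of $v$ and are glued at the collapse into a single point of $v_{\mathrm{col}}$: I must then verify that the cyclic arrangement of sectors produced by the gluing synthesises no elliptic sector from parabolic and hyperbolic pieces. The same one-sidedness of each contributing collar basin supplies this, since no orbit in a small neighborhood of the collapsed point can have that point as both $\omega$- and $\alpha$-limit, which is exactly the obstruction to an elliptic sector.
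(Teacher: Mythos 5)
Your proposal is correct and follows essentially the same route as the paper, which disposes of (1)--(4) ``by construction'' and proves (5) by observing that the sides of $\Gamma$ with non-trivial holonomy carry only hyperbolic sectors while each new collapsed point is a sink or source with a single parabolic sector, so elliptic sectors are neither created nor destroyed; your more detailed treatment of the corners at singular points of $v$ lying on $\Gamma$ fills in exactly what the paper leaves implicit. One small inaccuracy: your claim that $\Gamma_{\mathrm{col}}\subseteq\mathop{\mathrm{Sing}}(v_{\mathrm{col}})$ is too strong, since only \emph{limit} circuits are collapsed, and $\Gamma$ may also contain non-limit circuits with non-orientable holonomy whose lifts survive in $S_{\mathrm{col}}$ as unions of singular points and separatrices; the stated inclusion $\Gamma_{\mathrm{col}}\subseteq\mathop{\mathrm{Sing}}(v_{\mathrm{col}})\sqcup\mathrm{P}(v_{\mathrm{col}})$ still holds for these by the same immersion argument you already gave for $\Gamma_{\mathrm{me}}$, so nothing downstream breaks.
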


\begin{proof}
By construction, we have that $S - \Gamma = S_{\mathrm{me}} - \Gamma_{\mathrm{me}} = S_{\mathrm{col}} - \Gamma_{\mathrm{col}}$ and so that $v = v_{\mathrm{me}} = v_{\mathrm{col}}$ on the difference $S - \Gamma$.
Since $\Gamma \subseteq \mathop{\mathrm{Sing}}(v) \sqcup \mathrm{P}(v)$, we obtain  $\Gamma_{\mathrm{me}} \subseteq \mathop{\mathrm{Sing}}(v_{\mathrm{me}}) \sqcup \mathrm{P}(v_{\mathrm{me}})$ and $\Gamma_{\mathrm{col}} \subseteq \mathop{\mathrm{Sing}}(v_{\mathrm{col}}) \sqcup \mathrm{P}(v_{\mathrm{col}})$.
 Then $\mathrm{R}(v) \subseteq S - \Gamma =  S_{\mathrm{col}} - \Gamma_{\mathrm{col}}$.
Since $\mathrm{R}(v) \subseteq S - \Gamma$, the assertion $(2)$ implies the assertion $(4)$.
Since each non-trivial circuit with non-trivial holonomy has only hyperbolic sectors on the sides with non-trivial holonomy and each new collapsed singular point is either a sink or a source and so has a parabolic sector, any elliptic sectors are invariant under the deformation to obtain $v_{\mathrm{col}}$.
This means that $v$ has no singular points with elliptic sectors if and only if so does $v_{\mathrm{col}}$.
\end{proof}


Notice that a circuit with non-orientable holonomy is appeared as a limit of Morse flows (see Figure~\ref{nonori_hol}).
\begin{figure}
\begin{center}
\includegraphics[scale=0.285]{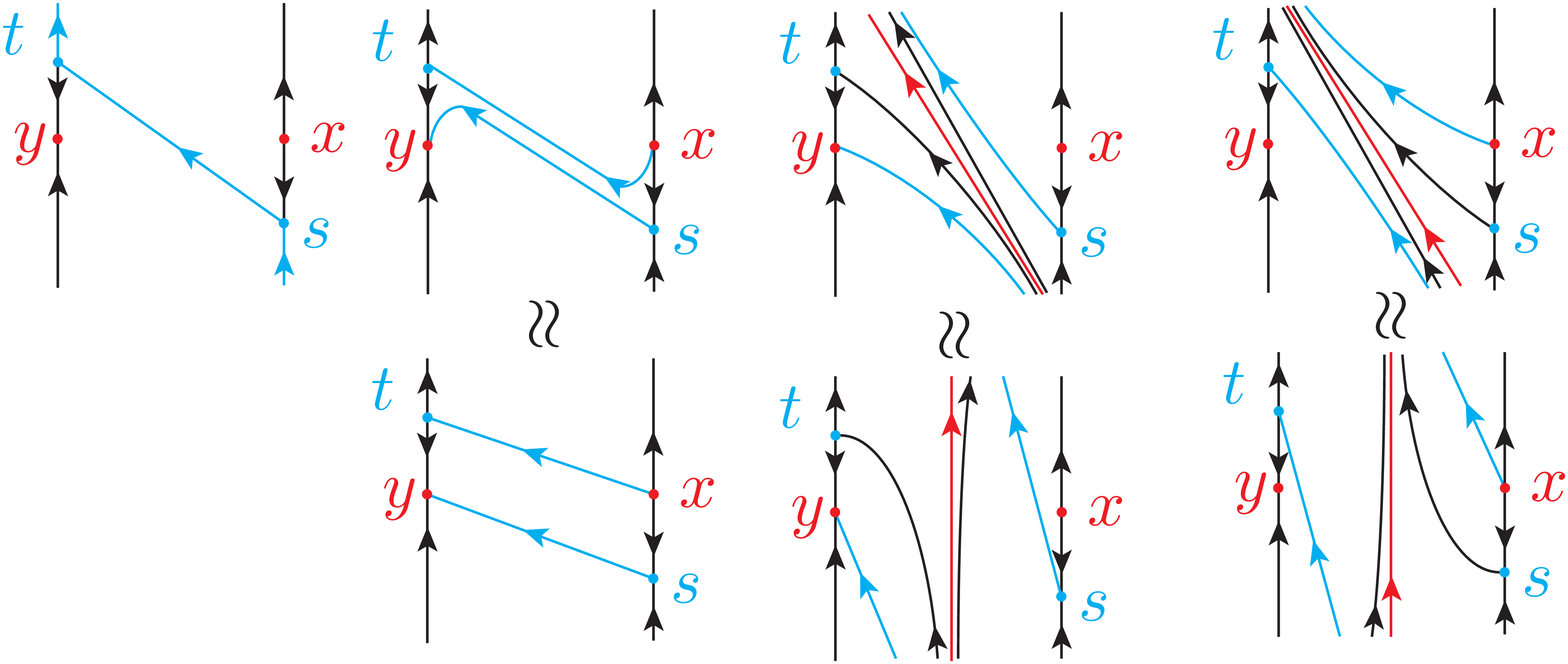}
\end{center}
\caption{All flows in the figure are flows on a M\"obius band with a $\partial$-sink, a $\partial$-source, two $\partial$-saddles, and non-recurrent orbits.
The flow in the figure on the left is not Morse-Smale but regular Morse-like and quasi-Morse-Smale (see definition in \S \ref{qms_flow}) and has a non-trivial circuit with non-limit non-orientable holonomy.
The flow in the second figure from the left is Morse. The flows in the second (resp. first) figures from the right are Morse-Smale and have a repelling (resp. attracting) limit cycle.}
\label{nonori_hol}
\end{figure}%
We have the following dichotomy of non-trivial circuits with non-trivial holonomy.

\begin{lemma}\label{lem:dich}
Each non-trivial circuit with non-trivial holonomy for a flow with finitely sectored singular points and with at most finitely many limit cycles on a surface either has non-orientable holonomy or is a limit circuit.
\end{lemma}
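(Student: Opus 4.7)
The plan is to prove the contrapositive: assuming $\gamma$ has orientable holonomy, I would show that $\gamma$ is a limit circuit. Orientability of the holonomy means that the collar $\A$ on the side where the holonomy is computed is an annulus rather than a M\"obius band. I would fix a small transversal arc $\tau \subset \overline{\A}$ with one endpoint at the basepoint $x \in \gamma$ from the definition of non-trivial holonomy, parametrise $\tau$ as $[0, \delta)$ with $0$ corresponding to $x$, and consider the first return map $P \colon \tau \to \tau$ induced by the flow box $U$ and the orientable identification of $I$ with $J$. Then $P(0) = 0$, and reading the non-trivial-holonomy condition as a local property at $x$ (which I would justify by the fact that the witnessing arcs $I, J$ and flow box $U$ can be taken arbitrarily thin around $x$), $P$ is not the identity on any one-sided neighborhood of $0$.

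Next, I would analyse the closed set $F = \mathrm{Fix}(P) \subset \tau$ and split into cases: either (a) $0$ is isolated in $F$, or (b) $F \setminus \{0\}$ accumulates at $0$. In case (a), there is an interval $(0, \delta')$ containing no fixed points of $P$, so by the intermediate value theorem applied to the continuous function $P - \mathrm{id}$, either $P(y) > y$ throughout this interval or $P(y) < y$ throughout. Iterating $P$ (respectively $P^{-1}$) then yields monotone convergence to $0$ for every point in $(0, \delta')$, which translates back to orbits in the corresponding sub-collar of $\A$ having $\omega$-limit (respectively $\alpha$-limit) equal to $\gamma$. Hence $\gamma$ is semi-attracting (respectively semi-repelling) with respect to that sub-collar, and so $\gamma$ is a limit circuit, as desired.

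I would then exclude case (b). The complement $\tau \setminus F$ is open, and since $P$ is not the identity on any one-sided neighborhood of $0$, $\tau \setminus F$ must meet every such neighborhood; together with $F$ accumulating at $0$, this forces infinitely many connected components of $\tau \setminus F$ to accumulate at $0$. On each such component $(a, b)$, $P$ has no fixed points, so $P > \mathrm{id}$ or $P < \mathrm{id}$ throughout, and iterating $P$ on $(a, b)$ gives monotone convergence to $a$ or to $b$. Thus the periodic orbit corresponding to the attracting endpoint is semi-attracting from the $(a, b)$-side, hence a limit cycle. Distinct components yield distinct such limit cycles, so infinitely many such components would produce infinitely many limit cycles, contradicting the finite-limit-cycle hypothesis and completing the contrapositive.

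The main obstacle I anticipate is the careful justification in the first step that non-trivial holonomy at $\gamma$ is a genuinely local property at the basepoint $x$, so that $P$ cannot be the identity on any one-sided neighborhood of $0$; equivalently, that $\gamma$ cannot bound a band of periodic orbits on the relevant side while still exhibiting non-trivial holonomy along that side. This should follow by shrinking the witnessing flow box $U$ and its vertical arcs $I, J$ around $x$, and the finitely-sectored-singular-points hypothesis is used here to ensure that $\gamma$ consists of finitely many orbits, so that the return map $P$ is well-defined and continuous on a one-sided neighborhood of $x$ in $\tau$.
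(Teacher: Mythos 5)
Your argument is essentially the paper's: reduce to an orientation-preserving first return map along the circuit, and observe that if fixed points of the return map fail to accumulate at the basepoint then the intermediate-value/monotone-iteration argument makes $\gamma$ semi-attracting or semi-repelling, while if they do accumulate then the interleaved non-fixed points (forced by non-triviality of the holonomy) produce infinitely many distinct limit cycles, contradicting the hypothesis. The locality issue you flag as the main obstacle is not resolved in the paper either --- its proof simply reads ``non-trivial holonomy'' as the germ-level statement that the return map is not the identity on any one-sided neighbourhood of the basepoint --- so your explicit case split on $\mathrm{Fix}(P)$ and the counting of components is, if anything, more careful than the original.
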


\begin{proof}
Let $v$ be a flow with finitely sectored singular points and with at most finitely many limit cycles on a surface $S$ and $\gamma$ a non-trivial circuit with non-trivial holonomy.

We claim that we may assume that there is a small open annular \nbd $A$ of the circuit $\gamma$ such that $v|_A$ is the restriction of a spherical flow.
Indeed, considering a small open annular \nbd $A$ of the circuit $\gamma$ and collapsing the two boundary components of $\partial A$, the resulting surface is a sphere.
This means that $v|_A$ can be considered as the restriction of a spherical flow.

The flow box theorem for a continuous flow on a compact surface implies that there are a point $y \in \gamma$ and small open transverse arcs $I$ and $J$ with $y \in \partial I \cap \partial J$ such that either $I \subseteq J$ or $J \subseteq I$ and that the first return map $f_v: I \to J$ is well-defined.
The existence of holonomy implies that there is an open flow box $U$ whose positive (resp. negative) vertical boundary $\partial_\perp^- U$ (resp. $\partial_\perp^+ U$) are $I$ (resp. $J$) such that $\gamma$ is a connected component of the difference $\partial U - (\partial_\perp^- U \cup \partial_\perp^+ U)$.
Then $U$ contains only hyperbolic sectors if sectors exists in $U$.
If $f_v$ is not orientation-preserving, then it is orientation-reversing and so has non-orientable holonomy.
Thus we may assume that $f_v$ is orientation-preserving.

We claim that $\gamma$ is a limit circuit.
Indeed, assume that $\gamma$ is not a limit circuit.
Then there is a sequence $(x_n)$ in $I$ to converge to a non-singular point $x \in \gamma$ such that each point $x_n$ is a fixed point of $f_v$.
Non-triviality of the holonomy implies that there is a sequence $(y_n)$ in $I$ to converge to $x$ such that each $y_n$ is not a fixed point of $f_v$.
Then each periodic orbit $O(\omega_{f_v}(y_n))$ is a limit circuit, which contradicts to the finiteness of limit cycles.
\end{proof}

We have the following characterization of the finite existence of limit cycles.

\begin{lemma}\label{lem:fin_hol}
The following statements are equivalent for a flow $v$ with finitely sectored singular points on a compact surface $S$:
\\
$(1)$ There are at most finitely many limit cycles.
\\
$(2)$ There are at most finitely many non-trivial circuits with non-trivial holonomy.
\end{lemma}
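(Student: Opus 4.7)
The implication (2) $\Rightarrow$ (1) is immediate: any limit cycle is a periodic non-trivial circuit, and semi-attraction (or semi-repulsion) on at least one side forces the first return map on an appropriate transverse arc to have an isolated fixed point with neighboring points spiraling toward (or away from) it, so the return map cannot be the identity. Hence every limit cycle is a non-trivial circuit with non-trivial holonomy, and finiteness of the latter forces finiteness of limit cycles.

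For (1) $\Rightarrow$ (2), the first step is to extract the underlying combinatorial finiteness. Since each singular point is finitely sectored, it admits a disk neighborhood decomposed into finitely many sectors; in particular each singular point is isolated. Because $\Sv$ is closed in the compact surface $S$ and discrete, it is finite. Each finitely sectored singular point has only finitely many border separatrices, so the set of all separatrices is finite as well. A non-periodic non-trivial circuit is, by definition, a finite directed graph whose edges are separatrices and whose vertices lie in $\Sv$, so only finitely many such circuits exist (irrespective of whether they have non-trivial holonomy). The problem thus reduces to bounding the number of \emph{cycles} with non-trivial holonomy.

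Applying Lemma \ref{lem:dich} to each such cycle $\gamma$: either $\gamma$ is a limit circuit --- and being periodic, a limit cycle, of which there are only finitely many by hypothesis --- or $\gamma$ has non-orientable holonomy. In the second case the surrounding flow box wraps around $\gamma$ non-orientably, so the first return map on a transverse arc reverses orientation, meaning $\gamma$ admits a M\"obius-band collar and is a one-sided simple closed curve in $S$. Distinct periodic orbits are pairwise disjoint, so it remains to bound the number of pairwise disjoint one-sided simple closed curves on $S$: within a M\"obius-band neighborhood of any one of them the complement of the core is an open annulus and therefore supports no one-sided simple closed curve at all, and iteratively removing regular M\"obius neighborhoods decreases the non-orientable genus by one. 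Hence the number of pairwise disjoint one-sided periodic orbits is bounded by the non-orientable genus of $S$, and combining the three finite contributions --- finitely many non-periodic circuits, finitely many limit cycles, and finitely many one-sided periodic orbits --- yields (2).

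The main obstacle is the one-sided-cycle count in the last step: the dynamical hypothesis bounds only limit cycles, so one has to convert ``non-orientable holonomy'' into the topological condition ``one-sided simple closed curve'' and then appeal to the surface-topology bound on pairwise disjoint such curves. The remainder is bookkeeping with Lemma \ref{lem:dich} and with the finiteness of separatrices implied by the finitely sectored hypothesis.
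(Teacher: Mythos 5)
Your proof is correct and follows essentially the same route as the paper's: the trivial direction, finiteness of non-periodic non-trivial circuits from the finitely sectored hypothesis, and Lemma~\ref{lem:dich} to split the remaining periodic case into limit cycles (finite by hypothesis) and circuits with non-orientable holonomy. Your genus-counting argument for pairwise disjoint one-sided periodic orbits simply fills in the detail that the paper compresses into ``by compactness of $S$''.
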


\begin{proof}
Trivially, the condition $(2)$ implies the condition $(1)$.
Conversely, suppose that there are at most finitely many limit cycles.
Finiteness of sectors of singular points implies that there are at most finitely many non-periodic non-trivial circuits.
By compactness of $S$, there are at most finitely many circuits with non-orientable holonomy.
Since there are at most finitely many limit cycles, Lemma~\ref{lem:dich} implies that there are at most finitely many non-trivial circuits with non-trivial holonomy.
\end{proof}

The inheritance of finitely sectored properties holds as follows.

\begin{lemma}\label{lem:fin_sec}
A flow $v$ with finitely many limit cycles on a compact surface is finitely sectored if and only if so is $v_{\mathrm{col}}$.
\end{lemma}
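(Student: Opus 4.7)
The finite-sector property is local at each singular point, so I will verify the equivalence one singular point at a time. By Lemma~\ref{lem:col}~(1)--(2), the flows $v$ and $v_{\mathrm{col}}$ coincide on $S - \Gamma = S_{\mathrm{col}} - \Gamma_{\mathrm{col}}$, so any singular point lying in this common open set has identical local sector structure with respect to either flow. The equivalence to be proved therefore reduces to singular points of $v$ on $\Gamma$ and singular points of $v_{\mathrm{col}}$ on $\Gamma_{\mathrm{col}}$.

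Assume first that $v$ is finitely sectored. Together with the hypothesis that $v$ has finitely many limit cycles, Lemma~\ref{lem:fin_hol} applied to $v$ gives finitely many non-trivial circuits with non-trivial holonomy; hence $\Gamma$ is a finite union of orbits, and $p_{\mathrm{me}}|_{\Gamma_{\mathrm{me}}}\colon \Gamma_{\mathrm{me}} \to \Gamma$ is an immersion with finite fibers. At a singular point $x \in \Gamma$, the finitely many orbit arcs of $\Gamma$ meeting $x$ cut the sector configuration of $v$ at $x$ into finitely many sides, each corresponding to the sector configuration of $v_{\mathrm{me}}$ at some preimage $y \in p_{\mathrm{me}}^{-1}(x)$; since each side is contained in a sub-configuration of the finite sector configuration of $v$ at $x$, each $y$ is finitely sectored with respect to $v_{\mathrm{me}}$. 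Applying $p_{\mathrm{col}}$, the image $p_{\mathrm{col}}(y)$ is either $y$ itself (when $y$ is not on a collapsed limit circuit) or the collapsed-circuit point that inherits at most one additional parabolic sector from each glued side; either way the sector count stays finite. Finally, any new singular point of $v_{\mathrm{col}}$ created by collapsing a limit circuit free of singular points of $v$ is a topologically attracting or repelling point with at most two parabolic sectors, hence trivially finitely sectored.

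For the converse, suppose $v_{\mathrm{col}}$ is finitely sectored. The preceding argument runs in reverse: at a singular point $x \in \Gamma$ of $v$, each preimage $y \in p_{\mathrm{me}}^{-1}(x)$ projects to a singular point of $v_{\mathrm{col}}$, and the sectors of $v_{\mathrm{me}}$ at $y$ are recovered from those of $v_{\mathrm{col}}$ at $p_{\mathrm{col}}(y)$ by discarding at most one parabolic sector (the one introduced by any collapsed limit circuit through $y$). Finite sector count at $p_{\mathrm{col}}(y)$ therefore yields finite sector count at $y$. The number of preimages of $x$ is bounded by the number of orbit arcs of $\Gamma$ meeting $x$, which is in turn controlled by the finite sector count of $v_{\mathrm{col}}$ at the image points, so the sector configuration of $v$ at $x$ is a finite union of finite sector configurations, and is therefore finite.

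\emph{Main obstacle.} The technical heart of the argument is the bookkeeping of the clause ``a pair of distinct sectors intersects at most two orbit arcs'' in the definition of a finitely sectored singular point. One must verify that this intersection condition is preserved in both directions under the metric completion (which separates sectors sharing orbit arcs in $\Gamma$ onto distinct sheets of $S_{\mathrm{me}}$) and under the collapse (which reglues those sheets at the collapsed circuit). The verification reduces to a local flow-box analysis around a non-trivial circuit with non-trivial holonomy, splitting by Lemma~\ref{lem:dich} into the limit-circuit case (where the collapse contributes at most one identifying orbit arc per sector pair) and the non-orientable-holonomy case (where the metric completion separates sides cleanly without affecting the count).
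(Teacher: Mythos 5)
Your forward direction follows the paper's route: finitely sectored $v$ plus finitely many limit cycles gives, via Lemma~\ref{lem:fin_hol}, finitely many circuits with non-trivial holonomy, and the completion/collapse then only produces finitely many new sinks and sources together with copies of parts of the old sector configurations. That half is fine.

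The converse, however, has a genuine gap. Your key claim is that ``the sectors of $v_{\mathrm{me}}$ at $y$ are recovered from those of $v_{\mathrm{col}}$ at $p_{\mathrm{col}}(y)$ by discarding at most one parabolic sector.'' This fails exactly where it matters: when $y$ lies on a collapsed limit circuit, the collapse sends the entire collar basin to a single parabolic sector of a new sink or source, thereby merging an a priori unbounded number of hyperbolic sectors of $v$ at $x$ (those facing the collar) into that one parabolic sector. The sector count at $p_{\mathrm{col}}(y)$ therefore carries no upper bound on the number of hyperbolic sectors of $v$ at $x$, and your inference ``finite sector count at $p_{\mathrm{col}}(y)$ yields finite sector count at $y$'' does not go through; the scenario you must exclude is precisely a singular point of $v$ with finitely many elliptic and parabolic sectors but infinitely many hyperbolic sectors all absorbed by the collapse. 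Relatedly, your bound on the number of preimages of $x$ in terms of the sector count of $v_{\mathrm{col}}$ leans on the same unproved recovery claim. The paper closes this gap with a Poincar\'e--Hopf index argument: since only hyperbolic sectors are collapsed, a non-finitely-sectored $x$ would have to carry infinitely many hyperbolic sectors, but each insertion of $2k$ hyperbolic sectors contributes $-2k$ to the index, forcing the index of $x$ to be $-\infty$ and contradicting that an isolated singular point has a well-defined finite index. Some argument of this kind (or another global obstruction) is needed; the purely local bookkeeping in your proposal, including the ``at most two orbit arcs'' analysis you flag as the main obstacle, cannot by itself rule out infinitely many hyperbolic sectors hidden in a collar basin.
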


\begin{proof}
Suppose that each singular point of $v$ is finitely sectored.
Then there are at most finitely many singular points of $v$.
Lemma~\ref{lem:fin_hol} implies that there are at most finitely many non-trivial circuits with non-trivial holonomy.
By construction of $v_{\mathrm{col}}$, we have finitely many new sinks, sources and copies of parts of sectored singular points of $v_{\mathrm{col}}$.
This means that the flow $v_{\mathrm{col}}$ has finitely many sectored singular points.

Conversely, suppose that each singular point of $v_{\mathrm{col}}$ is finitely sectored.
This means that $v$ has at most finitely many non-trivial circuits with non-trivial holonomy and that there are at most finitely many singular points of $v_{\mathrm{col}}$.
Moreover, the number of elliptic (resp. parabolic) sectors of $v$ is less than or equal to one of $v_{\mathrm{col}}$ and so is finite.
From the finite existence of limit cycles, by construction of $v_{\mathrm{col}}$, the difference of the numbers of sectors of $v_{\mathrm{col}}$ and of $v$ are finite.
Since each non-trivial circuits contains finitely many singular points and $v$ has at most finitely many non-trivial circuits with non-trivial holonomy, the flow $v$ have at most finitely many singular points.
Assume that there is a singular point $x$ of $v$ which is not finitely sectored.
Then the image $p_{\mathrm{col}}(p_{\mathrm{me}}^{-1}(x))$ is a finite union of singular points.
By finite sectored property of $v_{\mathrm{col}}$, since only hyperbolic sectors are collapsed via $p_{\mathrm{col}}$, there is a collar of $x$ in $S$ consists of finitely many elliptic and parabolic sectors and infinitely many hyperbolic sectors.
Isolatedness of $x$ implies that the index of $x$ is well-defined and finite.
On the other hand, an insertion of $2k$ hyperbolic sectors add $-2k$ to the index and so the index of $x$ is $- \infty$, which contradicts that the index of $x$ is finite.
Thus each singular point of $v$ is finitely sectored and so the original flow $v$ is finitely sectored.
\end{proof}

A periodic orbit $O$ is isolated if there is its open annular \nbd $\A$ with $\Pv \cap \A = O$.
Moreover, we have the following observations.

\begin{lemma}\label{lem:per_lc}
Any isolated periodic orbit of a flow on a surface is a limit cycle on each side.
\end{lemma}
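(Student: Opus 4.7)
The plan is to set up a Poincar\'e first-return map on a short transverse arc through $O$, use the isolation hypothesis to rule out nearby fixed points of the return map (or of its square, if $O$ is one-sided), and then derive the limit-cycle property from the resulting monotonicity.

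Concretely, I would fix a non-singular point $p \in O$ and apply the flow box theorem for continuous flows on surfaces (cf.\ Theorem 1.1, p.~45 of \cite{aranson1996introduction}) to produce a small open transverse arc $T$ through $p$ together with a continuous first return map $f$ defined on a neighborhood of $p$ in $T$, then shrink $T$ so that $T \setminus \{p\} \subset \A$ and $f$ is a homeomorphism onto its image. Suppose first that $O$ is two-sided, so that $T \setminus \{p\} = T^+ \sqcup T^-$ with each $T^\pm$ lying in one of the two collar components of $O$ and $f$ preserving sides near $p$. On $T^+$ the hypothesis $\Pv \cap \A = O$ forces $f$ to have no fixed points other than $p$; since $f$ is continuous and injective, it is strictly monotone in a suitable parameterization of $T^+$ with $p$ corresponding to $0$, so either $f(s) < s$ throughout $T^+$ or $f(s) > s$ throughout $T^+$. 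In the first case, the iterates $\{f^n(s)\}_{n \geq 0}$ form a strictly decreasing sequence bounded below, hence converge to a fixed point of $f$, which must be $p$; therefore $\omega(s) = O$, and the positive orbit of $s$ consists of the flow-box strips joining consecutive returns $f^n(s), f^{n+1}(s)$, which stay in a collar of $O$. The case $f(s) > s$ is dual and yields $\alpha(s) = O$. The same dichotomy on $T^-$ gives the limit-cycle property on the other side.

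If $O$ is one-sided, then a tubular neighborhood is a M\"obius band with a single collar component $\A$, and the return map $f$ swaps $T^+$ and $T^-$ near $p$. Replacing $f$ by $g := f^2$, which preserves each side, a fixed point of $g$ in $T^\pm \setminus \{p\}$ would correspond to a periodic orbit of twice the period lying in $\A$, contradicting the isolation hypothesis, so the monotonicity argument above applies verbatim to $g$ and yields the limit-cycle property on the single collar. The main step requiring care is verifying the inclusion $O^+(x) \subset \A$ (resp.\ $O^-(x) \subset \A$) in the definition of a semi-attracting (resp.\ semi-repelling) collar basin rather than merely proving the limit $\omega(x) = O$ or $\alpha(x) = O$; I expect to handle this by shrinking $\A$ to the sub-collar swept out by the union of flow-box strips between consecutive returns, which nests monotonically into $O$ and so lies inside any prescribed neighborhood of $O$.
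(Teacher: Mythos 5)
Your proof is correct and follows essentially the same route as the paper's: the paper's (three-line) argument also invokes the flow box theorem for $O$ and deduces from the isolation hypothesis that the return map has no nearby fixed points, hence $O$ is semi-attracting or semi-repelling on each component of $\A - O$. Your write-up simply makes explicit the monotonicity of the return map, the one-sided (M\"obius) case via $f^2$, and the verification that $O^{\pm}(x)$ stays in a collar, all of which the paper leaves implicit.
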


\begin{proof}
Let $O$ be an isolated periodic orbit with a small open annular \nbd $\A$ with $\Pv \cap \A = O$.
Since $\Sv$ is closed, we may assume that $\A$ contains no singular points.
By the flow box theorem for $O$, the isolated property of the periodic orbit $O$ implies that the cycle $O$ is semi-attracting or semi-repelling on connected components of $\A - O$.
\end{proof}

\begin{corollary}\label{cor:per_lc}
Any periodic orbit of a Morse-Smale-like flow on a surface is a limit cycle.
\end{corollary}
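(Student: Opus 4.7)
The plan is to derive Corollary~\ref{cor:per_lc} directly from Lemma~\ref{lem:per_lc}: I will show that in a Morse-Smale-like flow every periodic orbit $O$ has an open annular neighbourhood containing no other periodic orbit, after which Lemma~\ref{lem:per_lc} immediately supplies the semi-attracting or semi-repelling collar on each side that defines a limit cycle. Note that this reduction is cheap once isolation is known; the whole content is in proving isolation.

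To establish isolation, I would fix $x \in O$, take a small open transverse arc $T$ through $x$, and work with the first return map $f \colon U \to T$ on a neighbourhood $U$ of $x$ in $T$. By the flow box theorem near $O$ this map is continuous and fixes $x$, and after replacing $f$ by $f^2$ (to absorb non-orientable holonomy) I may take $f$ to be orientation-preserving and therefore monotone. Because any periodic orbit sufficiently close to $O$ in the Hausdorff metric intersects $T$ only a bounded number of times, the failure of $O$ to be isolated would force a sequence of fixed points of $f$ in $U$ accumulating at $x$, so the closed set $F := \mathrm{Fix}(f)$ would accumulate at $x$.

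I would then split into two cases according to whether $F$ has nonempty interior near $x$. If it does, the flow saturation of an interval $I \subset F$ is an open annular set of periodic orbits, contradicting the density of the non-recurrent set $\mathrm{P}(v)$ built into the Morse-Smale-like hypothesis. If $F$ has empty interior, the accumulation of $F$ at $x$ produces infinitely many pairwise disjoint components $(a_n, b_n)$ of $U \setminus F$ with $a_n, b_n \in F$ and $(a_n, b_n) \to \{x\}$. On each such interval $f$ has no fixed point, so monotonicity forces $f > \mathrm{id}$ or $f < \mathrm{id}$ throughout $(a_n, b_n)$; iterating $f$ and $f^{-1}$ then shows that every $y \in (a_n, b_n)$ satisfies $f^k(y) \to b_n$ and $f^{-k}(y) \to a_n$ (up to a swap), whence the orbit of $y$ has $\omega$-limit set $O(b_n)$ and $\alpha$-limit set $O(a_n)$, making both $O(a_n)$ and $O(b_n)$ limit cycles on the side containing $(a_n, b_n)$. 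Gathering over $n$ produces infinitely many distinct limit cycles, contradicting the finite existence of limit cycles for a Morse-Smale-like flow.

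The main obstacle I foresee is verifying that the limit cycles $O(a_n)$ produced in the second case are genuinely distinct across $n$, rather than a single periodic orbit meeting $T$ many times; this will be handled by observing that each periodic orbit close to $O$ meets $T$ in at most a fixed bounded number of points, so only finitely many of the $a_n, b_n$ can lie on any one orbit. The secondary bookkeeping subtlety is the orientation-reversing case: one must check that when we pass to $f^2$, the fixed points of $f^2$ accumulating at $x$ still correspond to genuine periodic orbits accumulating at $O$ in the surface, which follows because a $2$-cycle of $f$ is exactly a periodic orbit meeting $T$ twice near $x$.
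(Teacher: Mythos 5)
Your argument is correct and follows the same route the paper intends: establish that every periodic orbit of a Morse-Smale-like flow is isolated and then invoke Lemma~\ref{lem:per_lc}. The paper states the corollary without proof immediately after that lemma (elsewhere it attributes the same fact to Lemma~3.4 of \cite{yokoyama2017decompositions}), so your first-return-map dichotomy --- an interval of fixed points contradicting the density of $\mathrm{P}(v)$, versus accumulating complementary gaps producing infinitely many limit cycles and contradicting their finiteness --- is precisely the isolation step the authors leave implicit, worked out in full.
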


The previous corollary implies that a Morse-Smale-like flow without periodic orbits corresponds to a Morse-like flow.
A separatrix is homoclinic if there is a singular point which is both the $\alpha$-limit set and the $\omega$-limit set.
We have the following dichotomy of non-trivial circuits.

\begin{corollary}\label{cor:dic}
Let $v$ be a quasi-regular Morse-Smale-like flow on a surface $S$ such that each heteroclinic multi-saddle separatrix is contained in the boundary $\partial S$.
Then any non-trivial circuit of $v$ is either a limit circuit or has non-trivial holonomy.
\end{corollary}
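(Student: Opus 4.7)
The plan is to prove the contrapositive: if $\gamma$ is a non-trivial circuit that is not a limit circuit, then $\gamma$ must have non-trivial holonomy. If $\gamma$ is a cycle, then by Corollary~\ref{cor:per_lc} it is a limit cycle and hence a limit circuit, so I may assume $\gamma$ is non-periodic. In that case, the vertices of $\gamma$ are singular points admitting both incoming and outgoing separatrices, and quasi-regularity together with the Morse-Smale-like hypothesis (which forbids centers, since the non-recurrent set is open dense) forces each such vertex to be a multi-saddle, while ($\partial$-)sinks and ($\partial$-)sources are excluded because they only carry parabolic sectors.

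Next I would pick a regular point $x \in \gamma$, a short transverse arc $\tau$ at $x$, and one of the two local sides of $\gamma$ at $x$, and then construct a thin flow box $U$ following $\gamma$ on that side by gluing local flow boxes along the regular orbit edges of $\gamma$ and through a single pair of adjacent hyperbolic sectors at each multi-saddle vertex. The hypothesis that every heteroclinic multi-saddle separatrix lies in $\partial S$ is used here to rule out interior separatrices that could peel orbits off $\gamma$ mid-circuit, so that the tube can be threaded consistently on one side and $\gamma$ appears as a connected component of $\partial U \setminus (I \cup J)$, where $I, J \subseteq \tau$ are the vertical transverse sides of $U$ meeting at $x$. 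This matches the setup in the definition of non-trivial holonomy and provides a first return map $f_v \colon I \to J$, defined wherever orbits complete the trip around $\gamma$ in finite time.

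The analysis of $f_v$ then splits into three cases. If $f_v$ is undefined on a whole one-sided neighborhood of $x$ in $I$, then by compactness of $\overline{U}$ and the structure of $\omega$-limit sets for a Morse-Smale-like flow with finitely sectored singular points, the orbit of every such point must satisfy $\omega(q) = \gamma$ (it cannot accumulate on a non-closed recurrent set, on another circuit transverse to $U$, or on a sink inside $U$, since $U$ is chosen narrow and contains no singular points off $\gamma$), so $\gamma$ is semi-attracting on this side and hence a limit circuit, contrary to assumption. If $f_v$ is defined on a sub-arc of $I$ and coincides there with the identity under the canonical identification $I \cong J$ via $x$, then every point of that sub-arc is periodic, producing a continuous family of periodic orbits accumulating on $\gamma$; but by Corollary~\ref{cor:per_lc} each such orbit is a limit cycle and limit cycles are isolated among periodic orbits, a contradiction. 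The only remaining possibility is that $f_v$ is not the identity on some sub-arc, so $\gamma$ has non-trivial holonomy.

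The main obstacle I anticipate is the uniform construction of the one-sided flow box $U$ through the multi-saddle vertices of $\gamma$: at each vertex one must single out a pair of adjacent hyperbolic sectors through which the tube is threaded and verify that the glued object is genuinely a flow box in the sense defined in the preliminaries. The hypothesis on heteroclinic multi-saddle separatrices lying in $\partial S$ is what makes this local-to-global construction go through, since any interior heteroclinic separatrix incident to a vertex of $\gamma$ could otherwise slice $U$ and partition a would-be one-sided collar into pieces with different asymptotic behavior, obstructing both the definition of $f_v$ and the semi-attracting conclusion in the third case.
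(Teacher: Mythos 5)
Your overall mechanism is the same one the paper uses: look at the first return map on a transversal near the circuit and play the finiteness of periodic orbits (Corollary~\ref{cor:per_lc} plus the Morse-Smale-like bound on limit cycles) against the possibility that the return map is the identity. The paper packages this differently --- it first uses quasi-regularity and the heteroclinic-in-$\partial S$ hypothesis to reduce to circuits that are closures of single homoclinic multi-saddle separatrices, then splits on orientability of the holonomy and invokes Lemma~\ref{lem:dich} --- but your cases (b) and (c) are exactly the content of that lemma's argument, so this is not a genuinely different route.

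There is, however, a concrete error in your case (a), and it sits at exactly the point the dichotomy has to be earned. You claim that if $f_v$ is undefined on a whole one-sided neighborhood of $x$ in $I$, then $\omega(q)=\gamma$ for all such $q$ and hence $\gamma$ is semi-attracting. The implication runs the other way: if $\omega(q)=\gamma$, then $O^+(q)$ accumulates on $x\in\gamma$ and therefore crosses the transversal $\tau$ arbitrarily close to $x$, so $f_v(q)$ \emph{is} defined. Thus ``$f_v$ undefined near $x$'' means the orbits peel off $\gamma$ (at a vertex where an intermediate separatrix of the multi-saddle lies between the incoming and outgoing arcs of $\gamma$ on your chosen side, e.g.\ a separatrix running to a sink), and on that side $\gamma$ is neither semi-attracting nor semi-repelling, so you get neither alternative of the corollary from that side. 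The correct version of the limit-circuit alternative is the one in Lemma~\ref{lem:dich}: if $f_v$ \emph{is} defined on a one-sided neighborhood and has no fixed points there, the monotone iteration $f_v^n(y)$ forces $\omega(y)=\gamma$ (or $\alpha(y)=\gamma$) and $\gamma$ is a limit circuit. So your argument needs to be reorganized around whether $f_v$ is defined, and the genuinely hard residual case --- ruling out (or choosing a side that avoids) the peel-off configuration, which is where the hypothesis that heteroclinic multi-saddle separatrices lie in $\partial S$ and the reduction to homoclinic loops actually do their work --- is the one your trichotomy mislabels rather than resolves.
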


\begin{proof}
Corollary~\ref{cor:per_lc} implies that $\Pv$ consists of finitely many limit cycles.
The quasi-regularity implies that each non-trivial circuit is the closure of some homoclinic multi-saddle separatrix.
Fix a non-trivial circuit $\gamma$ with the saddle $x$.
Since fixed points of first return mappings correspond to periodic points, the finite existence of $\Pv$ implies that if the circuit is orientable then the holonomy along the circuit is non-trivial on the left in Figure~\ref{hol_saddle} and so the circuit $\gamma$ is a limit cycle.
\begin{figure}
\begin{center}
\includegraphics[scale=0.35]{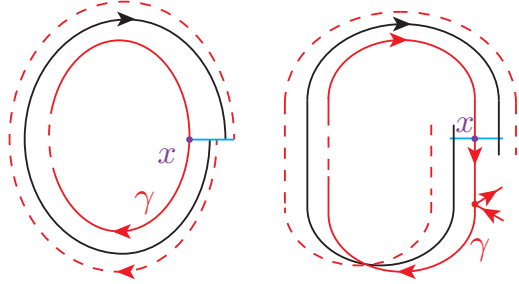}
\end{center}
\caption{Left, an orientable holonomy along a non-trivial circuit which is the closure of some homoclinic saddle separatrix $\gamma$; right, a non-orientable holonomy along a non-trivial circuit which is the closure of some homoclinic saddle separatrix $\gamma$.}
\label{hol_saddle}
\end{figure}%
Thus we may assume that $\gamma$ is non-orientable.
Then the holonomy along $\gamma$ is well-defined as the right in Figure~\ref{hol_saddle}.
Lemma~\ref{lem:dich} implies the assertion.
\end{proof}

We have the following characterization of a quasi-Morse-Smale flow with finitely many singular points.

\begin{proposition}\label{lem:qms_char}
The following are equivalent for a flow $v$ on a compact surface:
\\
{\rm(1)} The flow $v$ is Morse-Smale-like without elliptic sectors.
\\
{\rm(2)} The flow $v$ is quasi-Morse-Smale and has at most finitely many limit cycles and singular points.
\end{proposition}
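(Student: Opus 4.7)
The plan is to prove both implications by reducing to Theorem~\ref{main:c} applied to the collapsed flow $v_{\mathrm{col}}$, using Lemmas~\ref{lem:col}, \ref{lem:fin_hol}, \ref{lem:fin_sec}, and Corollary~\ref{cor:per_lc} as the transfer tools between $v$ and $v_{\mathrm{col}}$. For the direction (1) implies (2), I first observe that a Morse-Smale-like flow contains no centers, since a center would produce an open neighbourhood of periodic orbits and contradict density of non-recurrent points. Each singular point is then $\{x\}$ together with a finite union of hyperbolic and parabolic sectors, hence isolated; compactness of $S$ forces finitely many singular points. Morse-Smale-likeness directly supplies finitely many limit cycles, so Lemma~\ref{lem:fin_hol} yields finitely many non-trivial circuits with non-trivial holonomy. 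I would next show every non-trivial circuit of $v$ has non-trivial holonomy: otherwise a flow-box along the circuit with identity first-return map would produce an open family of periodic orbits, again contradicting openness of non-recurrent points. Hence $\Gamma$ is the finite union of all non-trivial circuits. Lemmas~\ref{lem:fin_sec} and \ref{lem:col}(5) give that $v_{\mathrm{col}}$ is finitely sectored without elliptic sectors, the construction removes every non-trivial circuit, and Lemma~\ref{lem:col}(4) together with the identification $S - \Gamma = S_{\mathrm{col}} - \Gamma_{\mathrm{col}}$ transfers density of non-recurrent points to $v_{\mathrm{col}}$. Thus $v_{\mathrm{col}}$ satisfies condition (2) of Theorem~\ref{main:c}, is gradient, and $v$ is quasi-Morse-Smale.

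For (2) implies (1), the hypothesis combined with Lemma~\ref{lem:fin_hol} shows that $\Gamma$ has finitely many connected components, so $v_{\mathrm{col}}$ has finitely many singular points (those of $v$ together with finitely many new sinks and sources produced by the collapse). Quasi-Morse-Smaleness means $v_{\mathrm{col}}$ is gradient; Theorem~\ref{main:a} and Theorem~\ref{main:c} then yield that $v_{\mathrm{col}}$ is Morse-Smale-like without elliptic sectors or non-trivial circuits. Pulling back via Lemmas~\ref{lem:col} and \ref{lem:fin_sec}, $v$ is finitely sectored, contains no elliptic sectors, and satisfies $\mathrm{R}(v) = \emptyset$. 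To finish Morse-Smale-likeness it remains to check open density of non-recurrent points; any non-isolated periodic orbit would by Lemma~\ref{lem:per_lc} lie in an open set of periodic orbits disjoint from $\Gamma$, which under the identification $S - \Gamma = S_{\mathrm{col}} - \Gamma_{\mathrm{col}}$ would produce an open set of periodic orbits in the gradient flow $v_{\mathrm{col}}$, a contradiction. Consequently every periodic orbit is a limit cycle, $\mathop{\mathrm{Sing}}(v) \sqcup \mathop{\mathrm{Per}}(v)$ is a finite union of orbits, and its complement $\mathrm{P}(v)$ is open dense.

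The main obstacle will be the two places where accumulation of closed orbits must be ruled out: excluding non-trivial circuits with trivial holonomy in the Morse-Smale-like setting, and excluding non-isolated periodic orbits in a flow whose collapse is gradient. Both rest on the same underlying principle that a locally identity first-return map forces a one-parameter family of periodic orbits, but they require care with one-sided collars, boundary circuits, and the non-orientable holonomy case illustrated in Figure~\ref{nonori_hol}.
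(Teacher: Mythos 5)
Your proposal follows essentially the same route as the paper: reduce both implications to Theorem~\ref{main:c} applied to $v_{\mathrm{col}}$, using Lemma~\ref{lem:col}, Lemma~\ref{lem:fin_hol}, Lemma~\ref{lem:fin_sec} and Corollary~\ref{cor:per_lc} to transfer the relevant properties back and forth, and the overall architecture matches the paper's proof of Proposition~\ref{lem:qms_char} step for step. The one substantive difference is that you make explicit a claim the paper leaves implicit, namely that in the direction $(1)\Rightarrow(2)$ \emph{every} non-trivial circuit of $v$ lies in $\Gamma$, so that $v_{\mathrm{col}}$ really has no non-trivial circuits and Theorem~\ref{main:c}(2) applies; the paper only records that $v_{\mathrm{col}}$ has no \emph{limit} circuits and then asserts the absence of non-trivial circuits without argument. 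Making this explicit is an improvement, but be aware that your justification ("a flow-box along the circuit with identity first-return map would produce an open family of periodic orbits") only covers the case where a first-return map along the circuit is defined and equals the identity. By the paper's definition, a circuit fails to have non-trivial holonomy also when no such flow box exists at all, e.g.\ an interior heteroclinic circuit through two saddles whose single "good" hyperbolic sector lies on opposite sides of the circuit at the two saddles, so that nearby orbits on either side are ejected after one sweep. Such a circuit is not a limit circuit, has no holonomy in the paper's sense, hence is not cut by the passage to $v_{\mathrm{col}}$; note that the paper's own dichotomy for circuits, Corollary~\ref{cor:dic}, is proved only under the extra hypothesis that heteroclinic multi-saddle separatrices lie in $\partial S$, which is not assumed here. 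So this case must either be ruled out or handled separately; the paper's proof is silent on it, and your argument does not close it either.
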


\begin{proof}
Let $v$ be a flow on a compact surface $S$.
Since a Morse-Smale-like flow has only finitely sectored isolated singular points and so has at most finitely many singular points, the flow $v$ has at most finitely many singular points.

Suppose that $v$ is a Morse-Smale-like flow without elliptic sectors.
Then $v$ has at most finitely many limit cycles.
Let $\Gamma$ be the union of non-trivial circuits with non-trivial holonomy.
Then $S = \mathop{\mathrm{Cl}}(v) \sqcup \mathrm{P}(v)$.
Corollary~\ref{cor:per_lc} implies that any periodic orbits are limit cycles and so that $\mathop{\mathrm{Per}}(v) \subseteq \Gamma$.
By construction, the collapsed flow $v_{\mathrm{col}}$ has no limit circuits and $S_{\mathrm{col}} = \mathop{\mathrm{Cl}}(v_{\mathrm{col}}) \sqcup \mathrm{P}(v_{\mathrm{col}})$.
Since $v$ is a finitely sectored flow without elliptic sectors, Lemma~\ref{lem:fin_sec} implies that so is $v_{\mathrm{col}}$.
Since there are at most finitely many limit circuits of $v$, Lemma~\ref{lem:fin_hol} implies that the union $\Gamma$ of non-trivial circuits with non-trivial holonomy is a finite union of separatrices and closed orbits and so is closed.
Since $S - \Gamma = S_{\mathrm{col}} - \Gamma_{\mathrm{col}}$, this means that $S - \Gamma \subseteq \mathop{\mathrm{Sing}}(v) \sqcup \mathrm{P}(v)$ is open dense in $S$ and so $S_{\mathrm{col}} - \Gamma_{\mathrm{col}} \subseteq  \mathop{\mathrm{Sing}}(v_{\mathrm{col}}) \sqcup \mathrm{P}(v_{\mathrm{col}})$ is open dense in $S_{\mathrm{col}}$.
Since $S$ is a Baire space and  $\mathrm{P}(v)$ is open dense, the intersection $\mathrm{P}(v) \cap (S - \Gamma) = \mathrm{P}(v) \setminus \Gamma$ is open dense.
Since $v$ is finitely sectored, Lemma~\ref{lem:fin_sec} implies that so is $v_{\mathrm{col}}$.
Then $v$ is Morse-Smale-like without elliptic sectors or non-trivial circuits.
Theorem~\ref{main:c} implies that $v_{\mathrm{col}}$ is a gradient flow and so the original flow $v$ is quasi-Morse-Smale.

Conversely, suppose that $v$ is quasi-Morse-Smale and has at most finitely many limit cycles.
Theorem~\ref{main:c} implies that the collapsed gradient flow $v_{\mathrm{col}}$ is a Morse-Smale-like flow without elliptic sectors or non-trivial circuits.
Lemma~\ref{lem:col} implies that $v$ has no elliptic sectors.
Since $v_{\mathrm{col}}$ is gradient, we have $\mathrm{R}(v_{\mathrm{col}}) = \emptyset$ and so $S = \mathop{\mathrm{Cl}}(v) \sqcup \mathrm{P}(v)$.
Since $v_{\mathrm{col}}$ is finitely sectored, Lemma~\ref{lem:fin_sec} implies that so is $v$.
Moreover, we obtain $S_{\mathrm{col}} = \mathop{\mathrm{Sing}}(v_{\mathrm{col}}) \sqcup \mathrm{P}(v_{\mathrm{col}}) = \overline{\mathrm{P}(v_{\mathrm{col}})}$.
The finiteness of singular points of $v_{\mathrm{col}}$ implies that there are at most finitely many limit circuits of $v$ and so that $S = \overline{\mathrm{P}(v)}$.
Thus $v$ is a Morse-Smale-like flow without elliptic sectors.
\end{proof}

The previous proposition implies Theorem~\ref{main:qausi-MS}.

%
%
%
%

\section{Topological characterizations of Morse-Smale flows}

We have the following statements.

\begin{lemma}\label{lem:ch_MSL}
A flow $v$ on a compact surface $S$ is Morse-Smale-like if and only if it is a flow with finitely sectored singular points such that $S = \Cv \sqcup \mathrm{P}(v)$ and that $\Cv$ consists of finitely many orbits.
In any case, any periodic orbits are limit cycles on any sides and the union $\mathrm{P}(v)$ is open dense.
\end{lemma}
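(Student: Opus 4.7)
The plan is to unpack both sides into comparable lists of clauses and verify them term by term, while handling the ``in any case'' conclusions as a single preliminary step that works from whichever hypothesis we start with. First I would establish that under either hypothesis every periodic orbit is a two-sided limit cycle and that $\mathrm{P}(v)$ is open dense. If $v$ is Morse-Smale-like, then Corollary~\ref{cor:per_lc} gives that every periodic orbit is a limit cycle, and the definition already bounds the number of limit cycles, so $\Pv$ is finite, hence each periodic orbit is isolated, and Lemma~\ref{lem:per_lc} upgrades this to being a limit cycle on each side. If instead $v$ satisfies the three right-hand conditions, then $\Pv \subseteq \Cv$ is already finite, so every periodic orbit is isolated and Lemma~\ref{lem:per_lc} applies again. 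In either case, $\mathrm{P}(v) = S \setminus \Cv$ is the complement in a compact surface of a finite union of points and embedded circles, hence is open and dense.

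For the forward direction I would start with a Morse-Smale-like flow $v$. Finite-sectoredness of the singular points and the equality $S = \Cv \sqcup \mathrm{P}(v)$ (equivalently $\mathrm{R}(v)=\emptyset$) are immediate from the definition. A finitely sectored singular point is by construction isolated, since it admits a disk neighborhood meeting $\Sv$ only at itself, so compactness of $S$ forces $\Sv$ to be finite. Combined with the opening paragraph, $\Cv = \Sv \sqcup \Pv$ is then a finite union of orbits.

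For the backward direction I would start with the three right-hand conditions. Finite-sectoredness supplies one clause of the definition, the decomposition $S = \Cv \sqcup \mathrm{P}(v)$ supplies the absence of non-closed recurrent points, and finiteness of $\Cv$ forces finitely many limit cycles because every limit cycle is a periodic orbit, hence an element of $\Pv \subseteq \Cv$. Openness and density of $\mathrm{P}(v)$ was already obtained in the opening paragraph. These four clauses together are precisely the definition of Morse-Smale-like, completing the equivalence.

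The main obstacle, I expect, is the openness and density of $\mathrm{P}(v)$ in the backward direction, since it is the only clause not literally present among the hypotheses; it relies on the dimensional observation that a finite union of points and embedded circles has empty interior in a $2$-manifold rather than on any dynamical input. Everything else reduces to reading off definitions and invoking Lemma~\ref{lem:per_lc} together with Corollary~\ref{cor:per_lc}, so no further technical machinery should be required.
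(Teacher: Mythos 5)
Your proposal is correct and follows essentially the same route as the paper: both directions are handled by unpacking the definition of Morse-Smale-like and invoking Lemma~\ref{lem:per_lc} (via Corollary~\ref{cor:per_lc}) to upgrade isolated periodic orbits to two-sided limit cycles. You are in fact more thorough than the paper's very terse proof, which omits the verification that $\mathrm{P}(v)$ is open dense in the backward direction and that $\Cv$ is a finite union of orbits in the forward direction; your dimension/compactness arguments fill those gaps correctly.
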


\begin{proof}
By definition, a Morse-Smale-like flow is a flow with finitely sectored singular points such that $S = \Cv \sqcup \mathrm{P}(v)$ and that $\Cv$ consists of finitely many orbits.
Conversely, suppose that $v$ is a flow with finitely sectored singular points such that $S = \Cv \sqcup \mathrm{P}(v)$ and that $\Cv$ consists of finitely many orbits.
Lemma~\ref{lem:per_lc} implies any periodic orbits are limit cycles on any sides.
\end{proof}

\begin{lemma}\label{lem:032}
The following are equivalent for a regular Morse-Smale-like flow $v$ on a compact surface $S$:
\\
$(1)$  Each multi-saddle separatrix is contained in the boundary $\partial S$ and there are no homoclinic separatrices.
\\
$(2)$ Each heteroclinic multi-saddle separatrix is contained in the boundary $\partial S$ and any non-trivial circuit with non-trivial holonomy are cycles.
\\
In any case, any non-trivial circuits are limit cycles.
\end{lemma}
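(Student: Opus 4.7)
The proof splits into the two implications and the concluding assertion. The first clause of (2) is immediate under (1), since every heteroclinic multi-saddle separatrix is in particular a multi-saddle separatrix and so lies in $\partial S$ by the first clause of (1). To establish the second clause of (2) under (1), I would take a non-trivial circuit $\gamma$ with non-trivial holonomy and argue by contradiction that $\gamma$ must be a cycle. If $\gamma$ is not a cycle, it is a directed concatenation of singular points and non-closed orbits; sinks and sources cannot appear as vertices of such a directed cycle, so every vertex of $\gamma$ is a multi-saddle and every edge is a multi-saddle separatrix. The no-homoclinic hypothesis of (1) forces each edge to be heteroclinic, and the first clause of (1) then places every edge in $\partial S$, whence $\gamma \subseteq \partial S$.

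The crucial step is to rule out non-trivial holonomy for such a boundary circuit. Any candidate open flow box $U$ witnessing holonomy along $\gamma \subseteq \partial S$ must lie entirely on the interior side of $\partial S$. At each multi-saddle $h \in \gamma$, regularity forces $h$ to be a genuine $\partial$-saddle, so its separatrices alternate in stability and, on the interior side of $\partial S$, at least one interior separatrix of $h$ separates the two $\gamma$-boundary separatrices incident to $h$. Hence no single hyperbolic sector at $h$ has both $\gamma$-edges as its sides, and an orbit starting on a transverse arc $I$ based at a regular point of $\gamma$ is deflected along an interior separatrix of some $h \in \gamma$ before it can traverse the next edge of $\gamma$. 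No first-return map $f_v \colon I \to J$ non-identical on $I$ can therefore exist, contradicting non-trivial holonomy, so $\gamma$ must be a cycle.

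For the reverse implication, I would assume (2) and suppose, toward contradiction, the existence of a homoclinic multi-saddle separatrix $\sigma$ at a multi-saddle $h$. Then $\overline{\sigma} = \sigma \cup \{h\}$ is a non-trivial non-cycle circuit. The hypotheses of Corollary~\ref{cor:dic} --- regularity, Morse-Smale-likeness, and each heteroclinic multi-saddle separatrix in $\partial S$ --- hold thanks to our standing assumptions and the first clause of (2), so $\overline{\sigma}$ is either a limit circuit or has non-trivial holonomy. In the limit-circuit case, $\overline{\sigma}$ is semi-attracting or semi-repelling on a collar, so the first-return map on a transverse arc through a regular point of $\sigma$ is a non-identity contraction or expansion and therefore also yields non-trivial holonomy. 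Either way the second clause of (2) forces $\overline{\sigma}$ to be a cycle, contradicting the presence of the singular point $h$. So no homoclinic multi-saddle separatrix exists, and every multi-saddle separatrix is heteroclinic, hence contained in $\partial S$ by the first clause of (2). The concluding assertion is then immediate: under either equivalent condition each non-trivial circuit with non-trivial holonomy is a cycle, i.e.\ a periodic orbit, and Corollary~\ref{cor:per_lc} makes every such periodic orbit a limit cycle. The principal technical obstacle will be the local-structure step in the forward direction --- ruling out non-trivial holonomy for a boundary circuit via the alternation of stable and unstable separatrices and the presence of an interior separatrix at each boundary multi-saddle --- while the remainder of the argument is a clean combination of Corollaries~\ref{cor:dic} and~\ref{cor:per_lc} with the combinatorics of non-trivial circuits.
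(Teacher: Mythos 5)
Your proposal is correct and follows essentially the same route as the paper: the forward direction likewise reduces a non-cycle circuit to a union of heteroclinic multi-saddle separatrices forced into $\partial S$ and then uses the local structure of a regular $\partial$-saddle (the interior separatrix separating the two boundary separatrices, equivalently the fact that the two boundary separatrices of a $1/2$-$\partial$-saddle have the same stability, so $\partial S$ carries no non-periodic directed circuit) to kill the holonomy; the backward direction is the same collar/holonomy dichotomy for the closure of a homoclinic separatrix, which the paper carries out inline and you legitimately outsource to Corollary~\ref{cor:dic}. The one point to tighten is the final clause ``any non-trivial circuits are limit cycles'': as written you only treat circuits \emph{with} non-trivial holonomy, whereas one must also exclude non-cycle circuits with trivial holonomy --- but this follows at once from ingredients you already invoke, since Corollary~\ref{cor:dic} makes every non-trivial circuit either a limit circuit or one with non-trivial holonomy, your own remark shows limit circuits have non-trivial holonomy, so every non-cycle circuit would carry non-trivial holonomy and hence cannot exist, leaving only cycles, which are limit cycles by Corollary~\ref{cor:per_lc}.
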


\begin{proof}
Let $v$ be a quasi-regular Morse-Smale-like flow on a compact surface $S$.
Lemma~\ref{lem:ch_MSL} implies that $S = \Cv \sqcup \mathrm{P}(v)$ and the union $\mathrm{P}(v)$ is open dense.
\cite{yokoyama2017decompositions}*{Lemma 3.4} implies that each periodic orbit is a limit cycle.
Moreover, each boundary component either is a limit cycle or consists of finitely many singular points and non-recurrent orbits.

Suppose that each multi-saddle separatrix is contained in the boundary $\partial S$ and there are no homoclinic separatrices.
For a non-multi-saddle semi-multi-saddle separatrix, either the $\omega$-limit set is  a semi-attracting circuit or the $\alpha$-limit set is a semi-repelling circuit.
Since $\partial S$ contains no non-periodic circuits as a directed graph, any non-trivial circuits are limit cycles.

Conversely, suppose that each heteroclinic multi-saddle separatrix is contained in the boundary $\partial S$ and that any non-trivial circuit with non-trivial holonomy are cycles.
Assume that there is a homoclinic separatrix $\gamma$ with a saddle $x = \alpha(\gamma) = \omega(\gamma)$.
Then the union $\gamma \sqcup \{ x\}$ is a circle.
If $\gamma \sqcup \{ x\}$ has non-orientable holonomy, then it is a limit cycle, which contradicts that $\gamma \sqcup \{ x\}$ is not a periodic orbit.
Therefore there is an annular collar $\mathbb{A}$ such that $\gamma \sqcup \{ x\}$ is a boundary component of $\mathbb{A}$.
Since $S = \Sv \sqcup \mathrm{P}(v)$, the finiteness of $\Sv$ implies that $\mathbb{A} \subset \mathrm{P}(v)$.
The flow box theorem for $\gamma$ implies that the circuit $\gamma$ is semi-attracting or semi-repelling with respect to $\A$, which contradicts the non-existence of non-periodic limit circuits.
Thus there are no homoclinic separatrices and so each multi-saddle separatrix is contained in the boundary $\partial S$.
\end{proof}

We show that Morse-Smale flows are Morse-Smale-like and state a topological characterization of Morse-Smale flows.

\begin{theorem}\label{MS}
The following are equivalent for a flow $v$ on a compact surface $S$:
\\
$(1)$ The flow $v$ is Morse-Smale.
\\
$(2)$ The flow $v$ is a regular Morse-Smale-like flow such that each limit cycle is topologically hyperbolic and that each multi-saddle separatrix is contained in the boundary $\partial S$.
\\
$(3)$ The flow $v$ is a regular Morse-Smale-like flow such that each non-trivial circuit with non-trivial holonomy is a topologically hyperbolic limit cycle and each heteroclinic multi-saddle separatrix is contained in the boundary $\partial S$.
\\
$(4)$ The flow $v$ is a regular quasi-Morse-Smale flow such that each limit cycle is topologically hyperbolic and that each multi-saddle separatrix is contained in the boundary $\partial S$.
\\
$(5)$ The flow $v$ is a regular quasi-Morse-Smale flow such that each non-trivial circuit with non-trivial holonomy is a topologically hyperbolic limit cycle and that each multi-saddle separatrix is contained in the boundary $\partial S$.
\\
$(6)$ $S = \Cv \sqcup \mathrm{P}(v)$, $\mathrm{P}_{\mathrm{ms}}(v) \subset \partial S$, and the closed point set $\Cv$ consists of finitely many topologically hyperbolic orbits.
\end{theorem}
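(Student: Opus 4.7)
The plan is to anchor the six conditions to the explicit topological characterization of Morse-Smale flows in Lemma~\ref{MS_char}, pass between them using Lemma~\ref{lem:032}, and transfer from Morse-Smale-likeness to quasi-Morse-Smaleness via Theorem~\ref{main:qausi-MS}.

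First I would dispatch the block (1) $\Leftrightarrow$ (2) $\Leftrightarrow$ (6) from Lemma~\ref{MS_char}. Condition (M2) is exactly regularity, (M3) is topological hyperbolicity of periodic orbits, and (M4) is the separatrix condition $\mathrm{P}_{\mathrm{ms}}(v) \subset \partial S$. The residual (M0)+(M1) is extracted from the Morse-Smale-like property: $S = \Cv \sqcup \mathrm{P}(v)$ forces $\mathrm{R}(v) = \emptyset$, a short flow-box argument shows every non-closed proper orbit is wandering so $\Omega(v) = \Cv$, and $\Cv$ consists of finitely many orbits because non-degenerate singular points are isolated on the compact surface while Corollary~\ref{cor:per_lc} identifies every periodic orbit with one of the finitely many limit cycles of a Morse-Smale-like flow.

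Second I would derive (2) $\Leftrightarrow$ (3) directly from Lemma~\ref{lem:032}. The key additional observation is that under regular Morse-Smale-likeness every limit cycle is a non-trivial circuit with non-trivial holonomy (the first-return map of a semi-attracting or semi-repelling isolated cycle is not the identity), and every non-trivial circuit with non-trivial holonomy that happens to be a cycle is a limit cycle, so the hyperbolicity conditions in (2) and (3) coincide; Lemma~\ref{lem:032} simultaneously exchanges the separatrix conditions of (2) and (3).

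Third I would obtain (2) $\Leftrightarrow$ (4) and (3) $\Leftrightarrow$ (5) from Theorem~\ref{main:qausi-MS}. Regularity implies quasi-regularity and Morse-Smale-likeness supplies the finite-limit-cycle hypothesis, so the forward directions follow at once. The main obstacle is the reverse direction: from a regular quasi-Morse-Smale flow with topologically hyperbolic limit cycles I must produce finitely many limit cycles. For this, each hyperbolic limit cycle collapses to an isolated singular point of the gradient flow $v_{\mathrm{col}}$, and the original non-degenerate singular points of $v$ survive as isolated singular points of $v_{\mathrm{col}}$ as well, so $\mathop{\mathrm{Sing}}(v_{\mathrm{col}})$ is a closed discrete subset of the compact surface $S_{\mathrm{col}}$, hence finite; regularity together with $\mathrm{P}_{\mathrm{ms}}(v) \subset \partial S$ then forces each connected component of $\Gamma$ to have finitely many edges, yielding only finitely many limit cycles. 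Theorem~\ref{main:qausi-MS} then delivers Morse-Smale-likeness, and an application of Lemma~\ref{lem:032} upgrades the heteroclinic hypothesis of (3) to the full multi-saddle separatrix condition of (5).
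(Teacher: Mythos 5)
Your overall skeleton agrees with the paper's for most of the equivalences: the paper also gets $(2)\Leftrightarrow(4)$ and $(3)\Leftrightarrow(5)$ from Theorem~\ref{main:qausi-MS}, $(2)\Leftrightarrow(3)$ from Lemma~\ref{lem:032}, and $(2)\Leftrightarrow(6)$ from Lemma~\ref{lem:ch_MSL} (essentially your Lemma~\ref{MS_char} step restricted to $(\mathrm{M}0)$--$(\mathrm{M}1)$). Where you genuinely diverge is $(1)\Leftrightarrow(2)$: the paper never verifies the conditions of Lemma~\ref{MS_char} directly, but instead collapses the limit circuits, invokes Theorem~\ref{main:qausi-MS} and Theorem~\ref{main:c} to see that $v_{\mathrm{col}}$ is a gradient Morse-Smale-like flow without non-trivial circuits, concludes from Theorem~\ref{main:Morse} that $v_{\mathrm{col}}$ is Morse, and then reverses the construction. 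Your route is legitimate in principle, but it contains a real gap.

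The gap is the sentence ``a short flow-box argument shows every non-closed proper orbit is wandering so $\Omega(v)=\Cv$.'' This is false for flows in general: a separatrix lying on a semi-attracting or semi-repelling non-periodic circuit (a polycycle, or a homoclinic saddle loop) is a non-closed proper orbit all of whose points are non-wandering, since nearby orbits spiral along the circuit and return to any flow box around the separatrix. So $(\mathrm{M}0)$ cannot follow from Morse-Smale-likeness and regularity alone; it is exactly here that the hypothesis $\mathrm{P}_{\mathrm{ms}}(v)\subset\partial S$ must enter, by forcing every would-be non-periodic limit circuit into $\partial S$ and then using the local structure of regular $\partial$-saddles (both boundary prongs incoming or both outgoing) to show that $\partial S$ carries no coherently directed non-periodic circuit --- this is precisely the content of the proof of Lemma~\ref{lem:032}, which you do not invoke at this point. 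As written, your argument for $(\mathrm{M}0)$ would apply verbatim to a regular Morse-Smale-like flow with an interior homoclinic loop, where the conclusion is wrong. A secondary, smaller issue: in $(4)\Rightarrow(2)$ your derivation of finiteness of limit cycles assumes that $S_{\mathrm{col}}$ is a compact surface on which $\mathop{\mathrm{Sing}}(v_{\mathrm{col}})$ is discrete, but when $\Gamma$ is a priori infinite the metric completion and collapse need not produce such a surface, so the argument is circular; you should instead argue that infinitely many topologically hyperbolic limit cycles on a compact surface would accumulate on a circuit or a non-isolated periodic orbit and derive a contradiction with regularity and with $v_{\mathrm{col}}$ being gradient. (The paper itself elides this point, so you were right to flag it, but the fix you propose does not yet close it.)
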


\begin{proof}
Note that the flow $v$ is regular in any case.
Theorem~\ref{main:qausi-MS} implies that $v$ is quasi-Morse-Smale if and only if $v$ is Morse-Smale-like.
This implies that the conditions $(2)$ and $(4)$ (resp. $(3)$ and $(5)$) are equivalent.
Lemma~\ref{lem:ch_MSL} implies that the conditions $(2)$ and $(6)$ are equivalent.
Lemma~\ref{lem:032} implies that that conditions $(2)$ and $(3)$ are equivalent.

Suppose that $v$ is Morse-Smale.
Then each limit circuit is a topologically hyperbolic limit cycle and each multi-saddle separatrix is contained in the boundary $\partial S$.
Therefore there are neither non-periodic limit circuits, topologically non-hyperbolic limit cycles, homoclinic separatrices, nor non-limit circuits with non-orientable holonomy.
The resulting flow $w$ by removing limit circuits, taking metric completion, and collapsing new boundary components into singletons is a quasi-regular gradient flow.
This means that $v$ is quasi-Morse-Smale without non-periodic limit circuits, topologically non-hyperbolic limit cycles, homoclinic separatrices, or non-limit circuits with non-orientable holonomy.
Hence $v$ is a quasi-Morse-Smale flow such that any limit circuits are topologically hyperbolic cycle and that any multi-saddle separatrices are heteroclinic contained in the boundary $\partial S$.
Therefore the condition $(1)$ implies the conditions $(4)$ and $(5)$.

Suppose that $v$ is a regular Morse-Smale-like flow such that each limit cycle is topologically hyperbolic and that each multi-saddle separatrix is contained in the boundary $\partial S$.
Then any non-trivial circuits are limit cycles.
By Theorem~\ref{main:qausi-MS}, the resulting flow $v_{\mathrm{col}}$ from $v$ by removing limit cycles, taking metric completion, and collapsing new boundary components into singletons is a regular gradient flow without non-trivial circuits such that each multi-saddle separatrix of $v_{\mathrm{col}}$ is contained in the boundary $\partial S_{\mathrm{col}}$.
Theorem~\ref{main:c} implies that $v_{\mathrm{col}}$ is regular Morse-Smale-like without non-trivial circuits such that each multi-saddle separatrix of $v_{\mathrm{col}}$ is contained in the boundary $\partial S_{\mathrm{col}}$.
Theorem~\ref{main:Morse} implies that $v_{\mathrm{col}}$ is Morse.
By construction of $v_{\mathrm{col}}$, the original flow $v$ is Morse-Smale.
\end{proof}

Theorem~\ref{main:d} is followed from the previous theorem.

\section{Genericity of Morse flows and Morse-Smale flows}

To characterize Morse (resp. Morse-Smale) flows and generic non-Morse gradient (resp. non-Morse-Smale Morse-Smale-like) flows, we describe the stability of transversality, quasi-regularity, and non-existence of multi-saddle separatrices outside of the boundary.
%

\subsection{Classes of flows on surfaces}

Let $S$ be a compact surface.
Denote by $\mathcal{G}(S)$ (resp. $\mathcal{Q}(S)$) the set of gradient (resp. Morse-Smale-like) flows with finitely many singular points on a compact surface $S$, by $\mathcal{G}^r(S)$ (resp. $\mathcal{Q}^r(S)$) the subset of $C^r$ flows in $\mathcal{G}(S)$ (resp. $\mathcal{Q}(S)$) with the $C^r$ topology for any $r \in \mathbb{Z}_{\geq 0}$, and by $\mathcal{G}_*(S)$ (resp. $\mathcal{Q}_*(S)$) the subset of quasi-regular gradient flows without fake saddles on $S$.

For any $r \in \mathbb{Z}_{\geq 0}$, let $\mathcal{G}^r_*(S) := \mathcal{G}^r(S) \cap \mathcal{G}_*(S)$ be the set of $C^r$ gradient flows without fake saddles on a compact surface $S$ and $\mathcal{Q}^r_*(S) := \mathcal{Q}^r(S) \mathcal{Q}_*(S)$ the set of $C^r$ Morse-Smale-like flows with finitely many singular points and  without fake saddles.
Note that a $C^{r+1}$ flow generates a $C^r$ vector field but that a $C^r$ vector field need not generate a $C^{r+1}$ flow in general.
%

For any $k \in \mathbb{Z}_{>0}$, denote by $\mathcal{G}_{k/2}(S)$ (resp. $\mathcal{Q}_{k/2}(S)$) the set of gradient (resp. quasi-Morse-Smale) flows on a compact surface $S$ whose sum of indices of sources, sinks, $\partial$-sources is $k/2$.
For any $l \in \mathbb{Z}_{>0}$, denote by $\mathcal{Q}_{k/2,l}(S)$ the subset of Morse-Smale-like flows in $\mathcal{Q}_{k/2}(S)$ each of whose number of limit circuits is $l$.
Put $\mathcal{Q}^r_{k/2}(S) := \mathcal{Q}_{k/2}(S) \cap \mathcal{Q}^r(S)$, $\mathcal{Q}^r_{k/2,l}(S) := \mathcal{Q}_{k/2,l}(S) \cap \mathcal{Q}^r(S)$, $\mathcal{G}_{k/2,*}(S) := \mathcal{G}_{k/2}(S) \cap \mathcal{G}_*(S)$, and $\mathcal{Q}_{k/2,l,*}(S) := \mathcal{Q}_{k/2}(S) \cap \mathcal{Q}_*(S)$, $\mathcal{G}^r_{k/2,*}(S) := \mathcal{G}^r_{k/2}(S) \cap \mathcal{G}_*(S)$, and $\mathcal{Q}^r_{k/2,l,*}(S) := \mathcal{Q}^r_{k/2}(S) \cap \mathcal{Q}_*(S)$.


\subsection{Stability of transversality of $C^1$ flows}

By the Thom transversality theorem, we have the following statement.

\begin{lemma}\label{transversality}
For any $s \in \mathbb{Z}_{>0}$ and for a flow $v \in \mathcal{G}^s(S)$ $(\mathrm {resp.} \, v \in \mathcal{Q}^s(S))$ on a compact surface $S$, any closed transversal and any transverse closed arc are invariant under $C^s$ perturbation in $\mathcal{G}^s(S)$ $(\mathrm {resp.} \,  \mathcal{Q}^s(S))$.
\end{lemma}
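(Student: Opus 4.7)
The plan is to invoke the standard consequence of the Thom transversality theorem that transversality of a $C^1$ vector field to a fixed compact $C^1$ submanifold is an open condition in the $C^1$ topology, hence in the $C^s$ topology for any $s \geq 1$. Since the submanifolds in question --- a closed transversal or a transverse closed arc --- are fixed and compact, the argument reduces to a straightforward compactness-continuity computation on $\gamma$ itself, with no genericity or perturbation to the curve required.

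First, I would fix a smooth $\gamma$ of the given type for $v$, a Riemannian metric on $S$, and a continuous unit normal field $\nu$ along $\gamma$. Since $v$ is a $C^s$ flow with $s \geq 1$, it is generated by a $C^s$ vector field $X_v$. Transversality of $\gamma$ with respect to $v$ amounts to $f(p) := |\langle X_v(p), \nu(p) \rangle| > 0$ for every $p \in \gamma$, and compactness of $\gamma$ then yields a uniform lower bound $\delta := \min_{p \in \gamma} f(p) > 0$. For any $w \in \mathcal{G}^s(S)$ (resp.\ $\mathcal{Q}^s(S)$) sufficiently close to $v$ in the $C^s$ topology, the generating vector field $X_w$ satisfies $\sup_{p \in \gamma} \|X_w(p) - X_v(p)\| < \delta/2$, so $|\langle X_w(p), \nu(p) \rangle| \geq \delta/2 > 0$ on $\gamma$, meaning $\gamma$ remains transverse to $w$.

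The main (and essentially only) subtlety is verifying that $C^s$ closeness of flows implies $C^0$ closeness of the associated vector fields on the compact set $\gamma$. This follows from differentiating the flow map in $t$ at $t = 0$, together with the definition of the $C^s$ topology on the compact slab $[-\varepsilon, \varepsilon] \times \gamma$; beyond this standard step, no further difficulties arise, so the whole argument is a two-line openness observation packaged with one routine check.
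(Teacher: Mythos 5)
Your proposal is correct and follows essentially the same route as the paper's own proof: fix a Riemannian metric, use compactness of the curve to get a uniform positive lower bound on the normal component of the generating vector field, and observe that a sufficiently $C^s$-small perturbation cannot destroy this bound. The only cosmetic difference is that you phrase the bound via a unit normal field and inner product while the paper uses the orthogonal projection onto the normal line, which amounts to the same quantity.
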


In this surface case, the proof is straightforward, and so we state as follows.

\begin{proof}
Denote $\chi^s$ by either $\mathcal{G}^s(S)$ or $v \in \mathcal{Q}^s(S)$.
Fix a flow $v \in \chi^s$.
Let $T$ be a closed transversal or a transverse closed arc to $v$.
Fix a Riemannian metric on $S$.
Since $S$ is two dimensional, the orthonormal subspace $(T_p T)^{\perp}$ of $T_p S$ at any point $p \in T$ is defined and one-dimensional.
Let $\pi: T_p S \to T_p T$ be the canonical projection.
Compactness of $T$ implies that the minimal value $\min_{p \in T} \left\| \pi\left(\frac{\partial}{\partial t}v(p)\right) \right\|$ is positive.
Take a $C^s$ \nbd $\mathcal{U}$ of $v$ in $\chi^s$ such that $\min_{x \in S} \left\| \frac{\partial}{\partial t}v(x) - \frac{\partial}{\partial t}w(x) \right\| < \min_{p \in T} \left\| \pi\left(\frac{\partial}{\partial t}v(p)\right) \right\|$ for any flow $w \in \mathcal{U}$.
For any flow $w \in \mathcal{U}$, we have that $\min_{p \in T} \left\| \frac{\partial}{\partial t}v(p) - \frac{\partial}{\partial t}w(p) \right\| < \min_{p \in T} \left\| \pi\left(\frac{\partial}{\partial t}v(p)\right) \right\|$ and so that the minimal value $\min_{p \in T} \left\| \pi\left(\frac{\partial}{\partial t}w(p)\right) \right\|$ is positive.
Therefore $T$ is transverse to any flow in $\mathcal{U}$.
\end{proof}

\subsection{Stability of quasi-regularity}

Put $\chi := \mathcal{G}$ or $\mathcal{Q}$.
Equip  $\chi^r_{k/2}(S)$ (resp. $\chi^r_*(S)$, $\chi^r_{k/2}(S)$, and $\chi^r_{k/2,l}(S)$) with the $C^r$ topologies.
Then $\chi_*(S) = \mathcal{G}_*(S)$ or $\mathcal{Q}_*(S)$, $\chi_{k/2}(S) = \mathcal{G}_{k/2}(S)$ or $\mathcal{Q}_{k/2}(S)$, and so on.
%

For any flow $v \in \chi_*(S)$ and an open subset $U$ of $S$ with $\mathop{\mathrm{Sing}}(v) \cap \partial U = \emptyset$, denote by $\mathrm{ind}_v(U)$ the sum of indices of singular points of $v$ in $U$.
To show the open denseness of regularity, we show following statements.

\begin{lemma}\label{index}
Let $v$ be a flow in $\chi^s_*(S)$ for any $s \in \mathbb{Z}_{>0}$, $x \in S \setminus \partial S$ an isolated singular point, and $U$ an open simply connected \nbd of $x$ with $\{ x \} = \overline{U} \cap \mathop{\mathrm{Sing}}(v)$ whose boundary consists of finitely many transverse closed arcs and non-degenerate closed orbit arcs
Then there is a \nbd $\mathcal{U} \subset \chi^s_*(S)$ of $v$ with $\mathrm{ind}_v(U) =  \mathrm{ind}_w(U)$ for any flow $w \in \mathcal{U}$.
\end{lemma}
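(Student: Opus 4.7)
The plan is to interpret $\mathrm{ind}_v(U)$ as the degree (Poincar\'e--Hopf winding number) of the normalized generating vector field along $\partial U$, and to exploit that this degree is locally constant in the $C^0$ norm on the set of vector fields that are nowhere vanishing on $\partial U$. Since $\chi^s_*(S)$ consists of quasi-regular flows, both $v$ and every nearby $w$ will have only isolated singular points in $U$, so Poincar\'e--Hopf applies on both sides and lets me convert the ``sum of indices'' definition into a boundary computation.

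First I would fix a Riemannian metric on $S$ and let $X_v$ denote the generating vector field of $v$. Because $\overline{U} \cap \Sv = \{x\}$ and $x$ lies in the interior of $U$, the restriction $X_v|_{\partial U}$ is nowhere vanishing on the compact set $\partial U$, so $\varepsilon_0 := \min_{p \in \partial U} \|X_v(p)\| > 0$. I would then take $\mathcal{U}$ to be the open $C^s$-neighborhood of $v$ in $\chi^s_*(S)$ consisting of flows $w$ whose generating field $X_w$ satisfies $\sup_{p \in S} \|X_v(p) - X_w(p)\| < \varepsilon_0/2$; openness uses only the $C^0$-component of the $C^s$ norm.

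Next, for any $w \in \mathcal{U}$, the straight-line homotopy $X_t := (1-t) X_v + t X_w$ satisfies $\|X_t(p)\| > \varepsilon_0/2$ on $\partial U$ for every $t \in [0,1]$; in particular $w$ has no singular points on $\partial U$. Consequently $p \mapsto X_t(p)/\|X_t(p)\|$ is a continuous homotopy $\partial U \to S^1$, so the degrees of $X_v/\|X_v\|$ and $X_w/\|X_w\|$ along $\partial U$ agree. Since both $v$ and $w$ are quasi-regular, they have finitely many isolated singular points in $U$, and the Poincar\'e--Hopf theorem identifies this common degree with both $\mathrm{ind}_v(U)$ and $\mathrm{ind}_w(U)$, yielding the desired equality.

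The only delicate point I expect is that $\partial U$ is just a piecewise smooth Jordan curve, built from transverse closed arcs and non-degenerate closed orbit arcs meeting at corners. This is however harmless: degree theory works for continuous maps from an arbitrary piecewise smooth Jordan curve into $S^1$, and the Poincar\'e--Hopf formula for the sum of indices of isolated zeros inside such a region is standard. Should the corners cause any bookkeeping trouble, one can perturb $\partial U$ slightly inside $U$, away from $\Sv$, to smooth the corners without changing either side of the equality.
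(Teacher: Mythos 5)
Your proposal is correct, and it takes a genuinely different route from the paper. The paper computes $\mathrm{ind}_v(U)$ via the Bendixson-type formula in terms of inner and outer tangencies of a loop that is transverse except at finitely many points: it uses the stability of transversality (the paper's Lemma on closed transversals) and first return maps along the orbit arcs of $\partial U$ to produce, for each nearby $w$, a nearby loop with the same cyclic arrangement of transverse arcs and directed orbit arcs, then replaces each orbit arc by a pair of transverse arcs and matches the tangency counts. You instead identify $\mathrm{ind}_v(U)$ with the winding number of the normalized generating vector field along $\partial U$ (legitimate, since $\overline{U}$ is a disk so $TS$ trivializes over it), and use the straight-line homotopy $X_t=(1-t)X_v+tX_w$, which stays nonvanishing on $\partial U$ under a $C^0$-small perturbation, to conclude that the degree is unchanged; Poincar\'e--Hopf for continuous fields with isolated zeros then gives the equality of the two index sums. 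Your argument is shorter, avoids all tangency bookkeeping, and in fact never uses the hypothesis that $\partial U$ decomposes into transverse arcs and orbit arcs --- only that $\partial U$ is compact and disjoint from $\mathop{\mathrm{Sing}}(v)$ --- so it proves a slightly more general statement. What the paper's approach buys is that it works intrinsically with the flow (tangencies and return maps rather than the generating field), and the auxiliary fact it establishes --- that the cyclic arrangement of transverse and orbit arcs along $\partial U$ persists under perturbation --- is combinatorial information reused in the surrounding genericity arguments, whereas your degree computation yields only the numerical equality. Both proofs require $s\geq 1$ to control $\frac{\partial}{\partial t}v$ in the $C^0$ sense, so neither is more general in that respect; your appeal to the $C^0$ component of the $C^s$ topology is consistent with how the paper itself uses it in its transversality lemma.
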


\begin{proof}
Recall that the index of an isolated singular point outside of the boundary of $S$ is determined by the numbers of inner and outer tangencies of a loop which is transverse at all but finitely many points and bounds an open disk containing the singular point by Poincar\'e-Hopf theorem for continuous flows with finitely many singular points on compact surfaces.
Therefore the index of such a singular point is determined by the numbers of orbit arcs near inner and outer tangencies of a loop which consists of finitely many non-degenerate orbit arcs and transverse arcs and which bounds an open disk containing the singular point.

We claim that we can take a loop that is near the original loop with respect to the Hausdorff distance such that they have the same cyclic arrangements of alternative sequences of transverse arcs and directed orbit arcs with positive/negative flow directions
under any small perturbation.
Indeed, let $\gamma$ be a loop which consists of finitely many non-degenerate orbit arcs $C_1, \ldots , C_k$ near tangencies of $\gamma$ respectively and a union $T'$ of transverse closed arcs to $v$ such that $\partial C_i = C_i \cap T'$ and $T' \subseteq \gamma$
as in Figure~\ref{trans_arcs}.
\begin{figure}
\begin{center}
\includegraphics[scale=0.5]{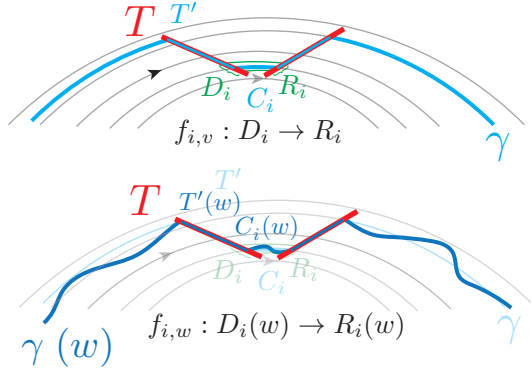}
\end{center}
\caption{A loop which consists of finitely many non-degenerate orbit arcs and transverse arcs and its perturbed loop which consists of finitely many non-degenerate orbit arcs}
\label{trans_arcs}
\end{figure}
Take a union $T$ of transverse closed arcs with $T' \subset \mathrm{int} T$.
Then we can define a first return map $f_{i,v}: D_i \to R_i$ along $C_i$ in $\gamma$ for
some transverse open arcs $D_i$ and $R_i$ contained $T$ with $\partial C_i = C_i \cap (D_i \sqcup R_i)$.
By Lemma~\ref{transversality}, there is a \nbd $\mathcal{U} \subset \chi_*(S)$ of $v$ such that for any flow $w \in \mathcal{U}$ such that $T$ is also a union of transverse closed arcs and such that there are transverse open arcs $D_i(w) \subseteq D_i$ and $R_i(w) \subset R_i$, and the first return map $f_{i,w}: D_i(w) \to R_i(w)$ along $w$ whose orientation is same as $f_{i,v}: D_i \to R_i$.
Fix any flow $w \in \mathcal{U}$.
There are a union $T'(w) \subset T$ of transverse closed arcs of $w$ and non-degenerate orbit arcs $C_1(w), \ldots , C_k(w)$ with $\partial C_i(w) = C_i(w) \cap (D_i(w) \cup R_i(w))$ near $C_1, \ldots , C_k$ respectively such that the union $\gamma(w) := T'(w) \cup \bigcup_{i = 1}^k C_i(w)$ is a loop.
Since the orientation of the first return map $f_{i,w}: D_i(w) \to R_i(w)$ is same as one of $f_{i,v}: D_i \to R_i$, this implies that $\gamma$ and $\gamma(w)$ have the same cyclic arrangements of alternative sequences of transverse arcs and directed orbit arcs with positive/negative flow directions.

Replacing orbit arcs into pairs of transverse arcs as in Figure~\ref{modified}, we obtain loop $\mu$ from $\gamma$ (resp. $\mu(w)$ from $\gamma(w)$) which  is transverse except finitely many tangencies.
\begin{figure}
\begin{center}
\includegraphics[scale=0.4]{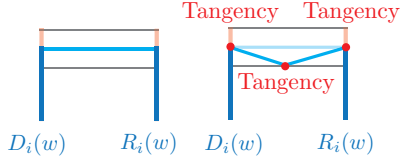}
\end{center}
\caption{Replacement of an orbit arc into a pair of two transverse arcs.}
\label{modified}
\end{figure}
Then any triples of new tangencies have exactly either one or two outer tangencies depending on the positions of orbit arcs $C_i$ and $C_i(w)$ in relation to the bounded disks.
Therefore the number of inner (resp. outer) tangencies of the loop $\mu$ for $v$ equals one of the loop $\mu(w)$ for $w$.
\end{proof}

\begin{lemma}\label{index_bdry}
Let $v$ be a flow in $\chi^s_*(S)$ for any $s \in \mathbb{Z}_{>0}$, $x \in \partial S$ an isolated singular point, and $U$ an open simply connected \nbd of $x$ with $\{ x \} = \overline{U} \cap \mathop{\mathrm{Sing}}(v)$ such that $\overline{\partial U \setminus \partial S}$ consists of finitely many transverse closed arcs and non-degenerate closed orbit arcs.
Then there is a \nbd $\mathcal{U} \subset \chi^s_*(S)$ of $v$ with $\mathrm{ind}_v(U) =  \mathrm{ind}_w(U)$ for any flow $w \in \mathcal{U}$.
\end{lemma}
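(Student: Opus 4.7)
My plan is to reduce the boundary statement to the interior case, Lemma~\ref{index}, via the doubling construction. I would take $\hat{S}$ to be the double of $S$ along $\partial S$, with canonical involution $\sigma \colon \hat{S} \to \hat{S}$ whose fixed-point set is $\partial S$; when $\partial S = \emptyset$ the lemma is vacuous, so we may assume $\partial S \neq \emptyset$ and $\hat{S}$ is a closed surface. Since $\partial S$ is flow-invariant for every flow on a surface with boundary (equivalently, the generating vector field is tangent to $\partial S$), the flow $v$ extends by reflection to a $\sigma$-equivariant flow $\hat{v}$ on $\hat{S}$: components tangent to $\partial S$ extend evenly in the normal coordinate while normal components extend oddly, giving a $C^s$ vector field, and the quasi-regular, fake-saddle-free structure of $v$ is inherited symmetrically, so $\hat{v} \in \chi^s_*(\hat{S})$.

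Next, I would set $\hat{U} := U \cup \sigma(U)$. Because $U$ is simply connected and $U \cap \partial S$ is a connected arc about $x$, a Seifert--van Kampen argument shows that $\hat{U}$ is an open simply connected neighborhood of the now-interior singular point $x$ with $\{x\} = \overline{\hat{U}} \cap \mathop{\mathrm{Sing}}(\hat{v})$. Its boundary is
\[
\partial \hat{U} \;=\; \overline{\partial U \setminus \partial S} \,\cup\, \sigma\bigl(\overline{\partial U \setminus \partial S}\bigr),
\]
which, by the hypothesis on $\overline{\partial U \setminus \partial S}$ together with the $\sigma$-equivariance of $\hat{v}$, consists of finitely many transverse closed arcs and non-degenerate closed orbit arcs of $\hat{v}$. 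Lemma~\ref{index} then applies to $(\hat{v}, \hat{U})$ in $\hat{S}$, yielding a $C^s$-neighborhood $\hat{\mathcal{U}} \subset \chi^s_*(\hat{S})$ of $\hat{v}$ with $\mathrm{ind}_{\hat{w}}(\hat{U}) = \mathrm{ind}_{\hat{v}}(\hat{U})$ for every $\hat{w} \in \hat{\mathcal{U}}$.

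To conclude, given a $C^s$-perturbation $w \in \chi^s_*(S)$ of $v$, I would form its $\sigma$-equivariant double $\hat{w} \in \chi^s_*(\hat{S})$; for $w$ sufficiently close to $v$, the extension $\hat{w}$ lies in $\hat{\mathcal{U}}$. By $\sigma$-invariance of $\hat{w}$, any singular point of $\hat{w}$ in $\hat{U}$ that lies on $\partial S$ has interior index in $\hat{S}$ equal to twice its boundary index in $S$, while off-boundary singular points come in $\sigma$-pairs with equal indices; summing gives $\mathrm{ind}_{\hat{w}}(\hat{U}) = 2\,\mathrm{ind}_w(U)$, and likewise $\mathrm{ind}_{\hat{v}}(\hat{U}) = 2\,\mathrm{ind}_v(U)$, whence $\mathrm{ind}_v(U) = \mathrm{ind}_w(U)$. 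The main obstacle is recording carefully that the doubled flow genuinely inherits the full regularity class $\chi^s_*$ across the fold $\partial S$ — $C^s$-smoothness of the vector field, quasi-regularity of any newly produced boundary singular points, and non-creation of fake saddles — and justifying the half-index identification between a boundary singular point and its double. Both are standard consequences of tangency to the boundary and the symmetric structure of the reflection, but they need to be stated explicitly before invoking Lemma~\ref{index}.
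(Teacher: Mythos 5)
Your proposal is correct and follows essentially the same route as the paper: the paper's proof also passes to the double $S_{\mathrm{dbl}} := S \sqcup -S$, notes that $C^1$-neighborhoods of $v$ map into $C^1$-neighborhoods of the doubled flow, and then invokes Lemma~\ref{index} for the now-interior singular point. Your write-up simply makes explicit the details (equivariant extension, simple connectivity of the doubled neighborhood, and the factor-of-two index identification) that the paper leaves implicit.
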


\begin{proof}
Suppose that $x \in \partial S$.
Taking the double $S_{\mathrm{dbl}} := S \sqcup -S$ of $S$, we can obtain the induced flow $w_{\mathrm{dbl}}$ on $S_{\mathrm{dbl}}$ from any flow $w \in \chi^s_*(S)$.
For any $C^1$-\nbd $\mathcal{U}_{v_{\mathrm{dbl}}}$ of $v_{\mathrm{dbl}}$, there is a $C^1$-\nbd $\mathcal{U}_{v}$ of $v$ such that $\{ w_{\mathrm{dbl}} \mid w \in \mathcal{U}_v \} \subset \mathcal{U}_{v_{\mathrm{dbl}}}$.
Since the double $S_{\mathrm{dbl}}$ is a closed surface and so $x \in S_{\mathrm{dbl}} = S_{\mathrm{dbl}} - \partial S_{\mathrm{dbl}}$, Lemma~\ref{index} implies that the assertion holds.
\end{proof}

Non-degenerate singular points have the same topological type if each of them is either a center, a saddle, a $\partial$-saddle, a sink, a $\partial$-sink, a source, or a $\partial$-source.
The previous lemmas imply the following properties.


\begin{lemma}\label{m2}
The regularity condition for singular points is dense in $\chi^r_{k/2}(S)$ for any $r \in \mathbb{Z}_{\geq 0}$ and open dense in $\chi^s_{k/2}(S)$ for any $s \in \mathbb{Z}_{> 0}$.
Moreover, the topological types of any singular points in the open dense subset in $\chi^s_{k/2}(S)$ are invariant under small perturbations.
\end{lemma}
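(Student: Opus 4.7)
The plan is to handle density and openness separately, using a local splitting perturbation for density and the index-preservation lemmas together with persistence of topologically hyperbolic singular points for openness. Fix a flow $v$ in the relevant class (I will treat the lemma in its natural setting $\chi^r_{k/2,*}(S)$, where quasi-regularity and the absence of fake saddles are already built in). By quasi-regularity together with Theorem~\ref{main:a} in the gradient case and the analogous structural description in the Morse-Smale-like case, $v$ has only finitely many singular points, each of which is a sink, $\partial$-sink, source, $\partial$-source, center, or a $k$-($\partial$-)saddle; the non-regular singular points are exactly the multi-saddles with $k\ge 2$ (or the $\partial$-saddles with $k\ge 1$).

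For density, fix $\varepsilon > 0$. Around each multi-saddle $x$ choose a small open neighborhood $U_x$ disjoint from every other singular point, and apply inside $U_x$ a local $C^r$-small perturbation that splits $x$ into the appropriate number of simple ($\partial$-)saddles. The standard local model is that the flow tangent to the level sets of $\operatorname{Re}(z^{k+1})$ splits into $k$ order-one saddles under the addition of a small linear term $-\varepsilon z$, and the analogous boundary model handles $\partial$-saddles after a halving. Carrying this out simultaneously at all multi-saddles (the supports $U_x$ can be made disjoint) yields a flow $w$ that is $C^r$-close to $v$, is still quasi-regular and free of fake saddles (the new singular points are simple saddles, hence in particular not fake), has the same sum $k/2$ of indices of attracting/repelling singular points (those are untouched), and whose singular points are all non-degenerate. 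This yields density for every $r\ge 0$.

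For openness with $s\ge 1$, let $x_1,\dots,x_n$ be the singular points of a regular $v\in\chi^s_{k/2,*}(S)$. For each $x_j$ choose a small open neighborhood $U_j$ with $\overline{U_j}\cap\mathop{\mathrm{Sing}}(v)=\{x_j\}$ whose boundary meets the hypothesis of Lemma~\ref{index} (interior case) or Lemma~\ref{index_bdry} (boundary case). Those lemmas provide a $C^s$-neighborhood $\mathcal{U}_j$ of $v$ on which $\mathrm{ind}_w(U_j)=\mathrm{ind}_v(U_j)$. Since regularity here entails that each $x_j$ is topologically hyperbolic (no centers arise in gradient or Morse-Smale-like flows), the persistence of topologically hyperbolic fixed points under $C^1$-perturbations—applied directly in the interior case and via the doubling trick already used in Lemma~\ref{index_bdry} in the boundary case—allows one to shrink $\mathcal{U}_j$ so that every $w\in\mathcal{U}_j$ has inside $U_j$ a unique singular point of the same topological type as $x_j$. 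One shrinks further using the nonvanishing of the generating vector field on the compact complement of $\bigcup_j U_j$ so that no new singular points appear outside $\bigcup_j U_j$. The finite intersection $\bigcap_j \mathcal{U}_j$ is then a $C^s$-neighborhood of $v$ in $\chi^s_{k/2,*}(S)$ consisting entirely of regular flows whose singular points preserve the topological types of $v$; this yields simultaneously openness and the claimed invariance of topological types. The main obstacle is the boundary version of persistence of topologically hyperbolic singular points, but as noted this is handled by the doubling construction, reducing it to the interior case where it is classical; apart from this input, the rest is a routine combination of the local splitting model with Lemmas~\ref{index} and~\ref{index_bdry}.
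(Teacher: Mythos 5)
Your density argument (splitting each multi-saddle locally into simple saddles and $\partial$-saddles) is essentially the paper's and is fine. The openness half, however, has a genuine gap: you invoke ``the persistence of topologically hyperbolic fixed points under $C^1$-perturbations'' to conclude that each $w$ near $v$ has a unique singular point of the same topological type in $U_j$. No such persistence theorem is available here. A regular flow in this paper is only \emph{topologically equivalent} to one with non-degenerate singular points, so the generating vector field may be degenerate at $x_j$ (think of the gradient of $x^4-y^4$, whose origin is a topologically hyperbolic saddle with vanishing linearization). The classical persistence/implicit-function argument requires an invertible linearization, and a degenerate topological saddle can split under an arbitrarily small $C^1$ perturbation into several singular points whose indices merely sum to $-1$ (e.g.\ a source plus a $2$-saddle). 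So the step where you ``shrink $\mathcal{U}_j$'' to obtain uniqueness and type preservation is precisely the assertion that needs proof, not an available input.

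The paper closes this gap with a global index count that you set up (via Lemma~\ref{transversality}, Lemma~\ref{index}, Lemma~\ref{index_bdry}, and Poincar\'e--Hopf) but never actually exploit: every flow in $\chi^s_{k/2}(S)$ has, by the very definition of the class, sum of positive indices equal to $k/2$; the index in each $U_j$ and the total index are preserved under small perturbation; and since the indices $1$, $1/2$, $-1$, $-1/2$ of the regular types are pairwise distinct, any change of configuration inside some $U_j$ would force the perturbed flow's sum of positive indices to exceed $k/2$, contradicting its membership in $\chi^s_{k/2}(S)$. In other words, it is the index constraint built into $\chi_{k/2}$ --- not any intrinsic stability of topologically hyperbolic singular points --- that rules out type changes. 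Replacing your persistence appeal with this counting argument repairs the proof; the remainder (disjoint neighborhoods, nonvanishing on the complement) is fine.
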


\begin{proof}
Since any multi-saddle can be approximated by saddles and $\partial$-saddles, the regularity condition for singular points is a dense condition in $\chi^r_{k/2}(S)$ for any $r \in \mathbb{Z}_{\geq 0}$.
Recall that the index of a sink/source (resp. $\partial$-sink/$\partial$-source) is $1$ (resp. $1/2$) and that the index of a $k$-saddle (resp. $k/2$-$\partial$-saddle ) for any $k \in \mathbb{Z}_{>0}$ is $-k$ (resp. $-k/2$).
Note that two singular points outside of $\partial S$ which are of different types have different indices.
On the other hand, the indices of a sink/source, a $\partial$-sink/$\partial$-source, a saddle, a $\partial$-saddle are different from each other because of the indices $1, 1/2, -1, -1/2$.
Notice that each singular point of a flow in $\chi^s_{k/2}(S)$ is either a sink, a source, a $\partial$-sink, a $\partial$-source, a saddle, or $\partial$-saddle.
By definition of $\chi_{k/2}(S)$, the sum of indices of parabolic sectored singular points of a flow in $\chi^s_{k/2}(S)$ is $k/2$.

Let $v$ be a flow in $\chi^s_{k/2}(S)$.
Then the sum of positive indices of singular points of $v$ is $k/2$.
Fix a disjoint union $U$ of simply connected open \nbds of the singular points of $v$ such that any connected component of $U$ contains exactly one singular point of $v$  and that for any connected component $V$ the closure $\overline{\partial U \setminus \partial S}$ consists of finitely many transverse closed arcs and non-degenerate closed orbit arcs.
Fix any flow $w$ in $\chi^s_{k/2}(S)$ near $v$.
Lemma~\ref{transversality} implies that $U$ is also a disjoint union of simply connected open\nbds of the singular points of $w$.
Poincar\'e-Hopf theorem implies that $\mathrm{ind}_{v}(S) = \mathrm{ind}_{w}(S)$.
Lemma~\ref{index} and Lemma~\ref{index_bdry} imply that $\mathrm{ind}_{w,+}(S) \geq k/2$.
Since the indices of a sink/source, a $\partial$-sink/$\partial$-source, a saddle, a $\partial$-saddle are different from each other, if there are a singular point of $v$ in a connected component of $U$ and a singular point of $w$ in the component which are of different types then $\mathrm{ind}_{w,+}(S) > k/2$, which contradicts $\mathrm{ind}_{w,+}(S) = k/2$.
Thus the regularity condition for singular points is open and the topological types of any singular points in the open dense subset in $\chi^s_{k/2}(S)$ are invariant under small perturbations.
\end{proof}

It is known that the non-degeneracy of $C^1$ Morse-Smale vector fields (and so $C^2$ Morse-Smale flows) is an open condition.
On the other hand, the non-degeneracy of $C^1$ Morse-Smale flows is not an open condition in general.

\subsection{Stability of non-existence of multi-saddle separatrices outside of the boundary}

Recall that the condition $(\mathrm{M}4)$ is the condition that any multi-saddle separatrices are contained in the boundary $\partial S$.
We show the stability of non-existence of multi-saddle separatrices outside of the boundary.


\begin{lemma}\label{m4}
For any $r \in \mathbb{Z}_{\geq 0}$, the subset of regular flows in $\chi^r_{k/2}(S)$ whose multi-saddle separatrices are contained in the boundary $\partial S$ is dense in $\chi^r_{k/2}(S)$.
\end{lemma}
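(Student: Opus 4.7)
The plan is to proceed in two stages. First, by Lemma~\ref{m2} we may $C^r$-approximate any $v \in \chi^r_{k/2}(S)$ by a regular flow; hence from the start we may assume $v$ is regular, so its singular points are only $(\partial)$-sinks, $(\partial)$-sources and $(\partial)$-saddles. Since $v \in \chi^r_{k/2}(S)$ has only finitely many singular points and each $(\partial)$-saddle carries only finitely many separatrices, the collection of multi-saddle separatrices lying outside $\partial S$ is a finite list $\gamma_1, \dots, \gamma_m$. The task reduces to breaking each of these interior saddle connections by a $C^r$-small perturbation that stays inside $\chi^r_{k/2}(S)$.

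For each $\gamma_i$, which is either homoclinic or connects two distinct $(\partial)$-saddles, pick a point $x_i$ in the interior of $\gamma_i$ lying in $S \setminus \partial S$, and a short transverse closed arc $\tau_i$ through $x_i$ chosen so that the $\tau_i$ are pairwise disjoint and disjoint from all other singular points and from the other $\gamma_j$. Inside a flow-box neighborhood $U_i$ of $\tau_i$ (disjoint from all singular points and from $\partial S$), one applies a standard $C^r$-small bump perturbation supported in $U_i$ that shifts the trace of $\gamma_i$ on $\tau_i$ off the stable manifold of the target $(\partial)$-saddle. This displaces the unstable separatrix of one endpoint so that its new $\omega$-limit is a $(\partial)$-sink or a limit circuit rather than a $(\partial)$-saddle. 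Because the perturbation is supported away from $\mathop{\mathrm{Sing}}(v)$ and from $\partial S$, and is $C^r$-small, no singular point is created, destroyed, or altered; Lemma~\ref{transversality} together with Lemma~\ref{m2} guarantees that regularity, quasi-regularity, the absence of fake saddles, and the index sum $k/2$ are all preserved. Iterating over $i = 1, \ldots, m$ on disjoint supports simultaneously breaks every interior saddle connection.

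The main obstacle is ensuring that the perturbed flow still belongs to the ambient class $\chi = \mathcal{G}$ or $\mathcal{Q}$, i.e. remains gradient respectively Morse-Smale-like, since an arbitrary $C^r$-perturbation of a gradient vector field need not be gradient. In the gradient case, I would realize $v$ (after regularization) as topologically equivalent to $\nabla f$ for some $C^{r+1}$ Morse function $f$ and invoke the classical Kupka-Smale type genericity for Morse functions on surfaces: those $f$ whose gradient flow has pairwise distinct saddle critical values and no interior saddle-to-saddle trajectories form a dense subset, which realizes the desired perturbation inside $\mathcal{G}^r_{k/2,\ast}(S)$. In the Morse-Smale-like case, apply the cut-and-collapse construction from Section~\ref{qms_flow}: by Theorem~\ref{main:qausi-MS} the collapsed flow $v_{\mathrm{col}}$ is gradient on $S_{\mathrm{col}}$, the interior multi-saddle separatrices of $v$ lift to interior multi-saddle separatrices of $v_{\mathrm{col}}$, and the perturbation performed on $S_{\mathrm{col}}$ can be lifted back through $p_{\mathrm{col}} \circ p_{\mathrm{me}}^{-1}$ to a $C^r$-small perturbation of $v$ on $S$ that preserves all limit circuits and thus the class $\mathcal{Q}^r_{k/2,l}(S)$.

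Combining these two stages gives, arbitrarily close to any $v \in \chi^r_{k/2}(S)$, a regular flow in $\chi^r_{k/2}(S)$ all of whose multi-saddle separatrices lie in $\partial S$, establishing the claimed density.
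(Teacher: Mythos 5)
Your first stage (regularize via Lemma~\ref{m2}, then break the finitely many interior multi-saddle separatrices one at a time by a $C^r$-small bump in a flow box) matches the paper's strategy. The genuine gaps are in your handling of what you correctly identify as the main obstacle, namely staying inside the class $\chi$. (1) In the gradient case, realizing $v$ as \emph{topologically equivalent} to $\nabla f$ and then invoking genericity of Morse functions does not close the argument: the equivalence is only a homeomorphism, so a small perturbation of $f$ produces a $C^r$-small perturbation of $\nabla f$ but carries no $C^r$ control whatsoever back to $v$ itself. You would obtain density in the topological-equivalence class, not density in the $C^r$ topology on $\chi^r_{k/2}(S)$, which is what the lemma asserts. (2) In the Morse-Smale-like case, your claim that interior multi-saddle separatrices of $v$ lift to interior multi-saddle separatrices of $v_{\mathrm{col}}$ fails exactly when the separatrix is contained in a limit circuit with non-trivial holonomy: those circuits are what the cut removes, so the separatrix does not survive to $S_{\mathrm{col}}$. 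The paper isolates precisely this case (the separatrix lies on the boundary of a semi-attracting/semi-repelling transverse annulus) and performs a different local surgery that annihilates the separatrix while creating a topological limit cycle, so that the number of limit circuits is preserved; without that step your perturbation changes $l$ and exits $\mathcal{Q}^r_{k/2,l}(S)$. Moreover $S_{\mathrm{col}}$ is only a topological quotient, so ``lifting the perturbation back through $p_{\mathrm{col}} \circ p_{\mathrm{me}}^{-1}$'' runs into the same loss of $C^r$ control as in (1).

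The paper sidesteps both problems by perturbing $v$ directly and then certifying membership in the class topologically: using the decomposition of $S - \mathop{\mathrm{BD}}_+(v)$ into open invariant flow boxes and open transverse annuli, it checks that the displaced separatrix acquires an $\omega$-limit set that is again a singular point or a limit circuit, that no non-closed recurrence or elliptic sectors are created, and that limit circuits are preserved in number; Theorems~\ref{main:c} and~\ref{main:qausi-MS} then guarantee the perturbed flow is still gradient, respectively Morse-Smale-like. You would need an analogous direct verification (or a smooth, not merely topological, realization of the class) to make your two reductions work.
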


\begin{proof}
Theorem~\ref{main:a} implies that any flow in $\chi^r_{k/2}(S)$ is finitely sectored.
The generalization of the Poincar\'e-Bendixson theorem for a flow with finitely many singular points implies that each of the $\omega$-limit set and the $\alpha$-limit set of a point of a flow in $\chi^r_{k/2}(S)$ is a singular point or a limit circuit.

First, we show the denseness in $\chi^r_{k/2}(S)$.
Let $v$ be a flow in $\chi^r_{k/2}(S)$.
By Lemma~\ref{m2}, we may assume that $v$ is regular.
Since any flow in $\chi_{k/2}(S)$ is of weakly finite type, \cite{yokoyama2017decompositions}*{Corollary 7.8} implies that each connected component of the complement of $\mathop{\mathrm{BD}}_+(v)$ is either an open transverse annulus or an open invariant flow box such that the union $\mathop{\mathrm{BD}}_+(v)$ is the finite union of singular points, limit cycles, and semi-multi-saddle separatrices of $v$.
Moreover, $\omega$-limit (resp. the $\alpha$-limit) sets of any points in a connected component of $\mathop{\mathrm{BD}}_+(v)$ correspond to each other.
Therefore we can perturb any multi-saddle separatrix outside of the boundary $\partial S$
as in Figure~ \ref{perturbation} using an open transverse annulus or an open invariant flow box which is a connected component of $S - \mathop{\mathrm{BD}}_+(v)$.
\begin{figure}
\begin{center}
\includegraphics[scale=0.4]{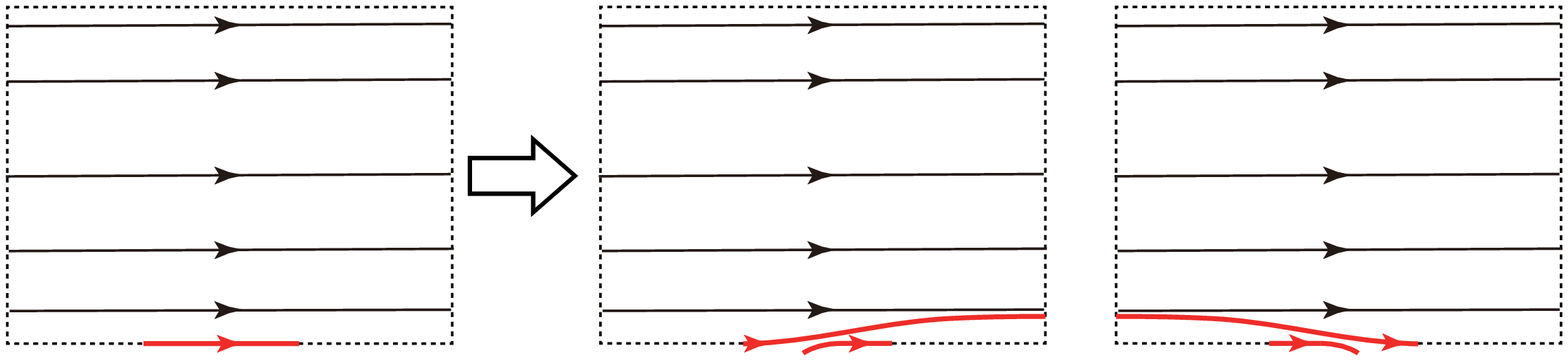}
\end{center}
\caption{Annihilation of a multi-saddle separatrix and creation of a topological limit cycle}
\label{perturbation}
\end{figure}
Then this perturbation preserves a \nbd of the singular point set and $\omega$-limit and the $\alpha$-limit sets.
This implies that the property of finitely sectored singular points is invariant under the perturbations.

We claim that we can choose the perturbation in $\chi^r_{k/2}(S)$ which annihilate a multi-saddle separatrix outside of $\partial S$ in $\chi^r_{k/2}(S)$ respectively.
Indeed, let $\gamma$ be a multi-saddle separatrix outside of $\partial S$ and $x \in \gamma$.
Let $w$ be such the resulting flow of $v$ by the perturbation by applying to a multi-saddle separatrix outside of $\partial S$ the operation as in Figure~\ref{perturbation}.
Suppose that $x$ is contained in the boundary of an invariant flow box $B$.
Then the $\omega$-limit set $\omega_w(x)$ is a singular point or a limit circuit.
Moreover, the perturbation preserves limit circuits.
By construction, the non-existence of non-closed recurrent points and the open denseness of non-recurrent points.
This means that the perturbation can be realized in $\chi^r_{k/2}(S)$ and is desired.
Thus we may assume that $x$ is contained in the boundary of a semi-repelling transverse annulus $A$.
Then $x$ is contained in a limit circuit $\gamma$ for $v$.
Let $A$ be a semi-repelling/semi-attracting basin of the limit circuit $\gamma$.
We can choose the perturbation as in Figure~\ref{perturbation02} which annihilate a multi-saddle separatrix in $\gamma$, creates a topological limit cycle, and preserves the non-existence of non-closed recurrent points and the open denseness of non-recurrent points.
\begin{figure}
\begin{center}
\includegraphics[scale=0.5]{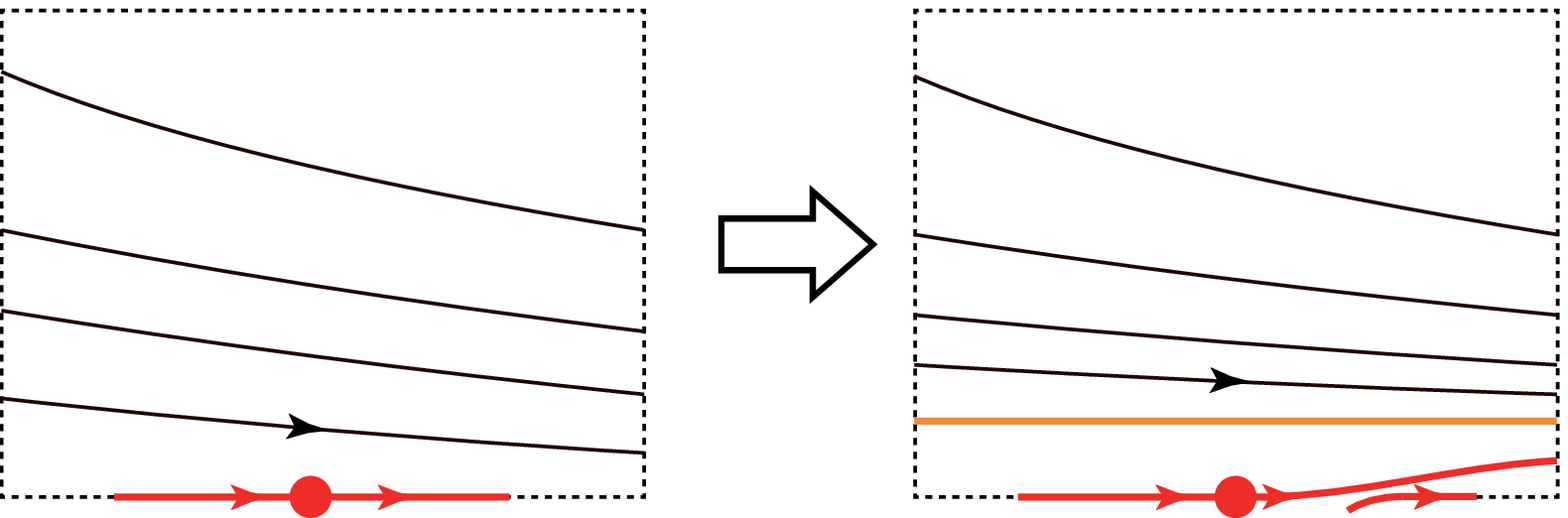}
\end{center}
\caption{Annihilation of a multi-saddle separatrix and creation of a topological limit cycle}
\label{perturbation02}
\end{figure}
This implies that the perturbation can be realized in $\chi^r_{k/2}(S)$ and is desired.

Applying the claim finitely many times, the multi-saddle separatrices become contained in the boundary $\partial S$.

\end{proof}

Notice that the proof of Lemma~\ref{m4} says that any small perturbations do not increase the number of multi-saddle separatrices outside of $\partial S$ under the regularity condition and that the proof of Lemma~\ref{m2} says that topological hyperbolic singular points are invariant under any small perturbations.
Moreover, the existence of fake limit cycles breaks the openness of the condition $(\mathrm{M}4)$, and that limit cycles can be bifurcated into topologically non-hyperbolic limit cycles.
However, restricting the subspace of flows, forbidding the existence of fake limit cycles, and fixing the number of limit cycles, we have the following openness.

\begin{lemma}\label{m45}
For any $s \in \mathbb{Z}_{>0}$, under the non-existence of fake limit cycles, the subset of regular flows in $\chi^s_{k/2,*}(S)$ whose multi-saddle separatrices are contained in the boundary $\partial S$ is open dense in $\chi^s_{k/2,*}(S)$.
\end{lemma}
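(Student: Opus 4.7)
The plan is to split the claim into density and openness. Density is essentially furnished by Lemma~\ref{m4}: I would only need to verify that the annihilation moves depicted in Figures~\ref{perturbation} and \ref{perturbation02} can be realized by a $C^s$-small perturbation that preserves the quasi-regularity and the absence of fake saddles, and in the Morse-Smale-like case does not create fake limit cycles. This is done by performing each modification inside an invariant flow box or a semi-(at/re)tracting transverse annulus of $\mathop{\mathrm{BD}}_+(v)$, and choosing the created sinks, sources and topological cycles to be topologically hyperbolic, so that the resulting flow remains in $\chi^s_{k/2,*}(S)$ and in the subset of flows without fake limit cycles.

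For openness, fix a regular $v \in \chi^s_{k/2,*}(S)$ with no fake limit cycles and with $\mathrm{P}_{\mathrm{ms}}(v)\subset \partial S$. First, by Lemma~\ref{m2}, shrinking to a $C^s$-neighborhood $\mathcal{U}$ of $v$ in $\chi^s_{k/2,*}(S)$, every $w \in \mathcal{U}$ is regular, its singular points are in bijection with those of $v$ via proximity, and corresponding singular points share their topological type. Next, I would further shrink $\mathcal{U}$ so that each topologically hyperbolic limit cycle of $v$ persists as a nearby topologically hyperbolic limit cycle of $w$; this uses the structural stability of topologically hyperbolic cycles in the $C^1$ topology (which follows from the non-existence of fake limit cycles, together with Lemma~\ref{transversality} applied to short transversals at points of each cycle to make the first return map $C^1$-close).

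The main step, and the one I expect to be the principal obstacle, is to rule out multi-saddle separatrices outside $\partial S$ for flows in $\mathcal{U}$. For each (\/$\partial$-)saddle $p$ of $v$ and each of its separatrices $\gamma$ leaving $\partial S$, the $\omega$- or $\alpha$-limit of $\gamma$ is, by hypothesis and by the generalized Poincar\'e--Bendixson theorem for flows with finitely many singular points, a ($\partial$-)sink, a ($\partial$-)source, or a topologically hyperbolic limit cycle of $v$. Each such invariant set has an open basin. Using continuous dependence of trajectories on the vector field on compact time intervals, together with a $\lambda$-lemma type argument for the local stable/unstable arc of the perturbed saddle $p_w$ at the nearby singular point, for $w \in \mathcal{U}$ sufficiently close to $v$ each corresponding separatrix of $p_w$ enters the basin of the persisted attractor/repeller and so limits to it, not to any ($\partial$-)saddle. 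Separatrices of $\partial$-saddles lying in $\partial S$ remain in $\partial S$ because $\partial S$ is flow-invariant.

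The delicate point is precisely the basin argument above: if fake limit cycles were allowed, they could bifurcate under arbitrarily small $C^s$ perturbations, either dissolving and releasing a saddle separatrix that then hits another saddle, or splitting into a saddle connection; these pathologies are exactly what the no-fake-limit-cycle hypothesis rules out. Once the basin argument is secured, combining Steps~1--3 shows that every $w \in \mathcal{U}$ is regular with $\mathrm{P}_{\mathrm{ms}}(w)\subset \partial S$, which, together with density, yields the lemma.
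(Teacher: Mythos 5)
Your proposal follows the same decomposition as the paper's proof: density is delegated to Lemma~\ref{m4}, and openness to Lemma~\ref{m2} together with the openness of the condition that no multi-saddle separatrix lies in the interior of $S$. The only difference is that the paper simply asserts this last openness in one line, whereas you supply the standard justification (persistence of topologically hyperbolic sinks, sources and limit cycles under the no-fake-limit-cycle hypothesis, plus continuous dependence of the perturbed saddle separatrices so that they fall into the persisted basins); this is a welcome elaboration of the same route rather than a genuinely different argument.
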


\begin{proof}
Lemma~\ref{m2} implies that the regularity condition for singular points is open dense and the topological types of any singular points are invariant under small perturbations.
The density of the condition that multi-saddle separatrices are contained in the boundary $\partial S$ is followed from Lemma~\ref{m4}.
Let $v$ be a regular flow in $\chi_{k/2,*}(S)$ whose multi-saddle separatrices are contained in the boundary $\partial S$.
Since the non-existence of multi-saddle separatrices in the interior $\mathrm{int} S$ is an open condition,  the multi-saddles are contained in the boundary $\partial S$ under any small perturbations.
\end{proof}

Theorem~\ref{main:d} and Lemma~\ref{m45} imply Theorem~\ref{th:open_dense_M} and Theorem~\ref{th:open_dense_MS}.
The authors would like to know whether the set of Morse-Smale flows on a compact surface $S$ in $\mathcal{G}^0_{k/2,*}(S)$ {\rm(resp.} $\mathcal{Q}^0_{k/2}(S)${\rm)} is open dense in $\mathcal{G}^0_{k/2,*}(S)$ {\rm(resp.} $\mathcal{Q}^0_{k/2}(S)${\rm)}.

\subsection{Higher codimensional transition spaces of Morse-Smale flows}

For any $m \in \mathbb{Z}_{>0}$, define the following conditions $(\mathrm{M}4_m)$:
\\
$(\mathrm{M}4_m)$ There are exactly $m$ multi-saddle separatrices outside of the boundary $\partial S$.
\\
Note that the condition $(\mathrm{M}4)$ equals the condition $(\mathrm{M}4_0)$.
For any $k, m \in \mathbb{Z}_{>0}$, denote by $\mathcal{G}_{k/2,m,*}(S)$ the subset of quasi-regular gradient flows whose sum of indices of parabolic sectored singular points is $k/2$ and which satisfy the condition $(\mathrm{M}4_{m})$.
For any $k, l, m \in \mathbb{Z}_{>0}$, denote by $\mathcal{Q}_{k/2,l,m,*}(S)$ the subset of quasi-regular Morse-Smale-like flows in $\mathcal{Q}_{k/2}(S)$ each of whose number of limit circuits is $l$ and which satisfy the condition $(\mathrm{M}4_{m})$.
The proof of Lemma~\ref{m4} implies an analogous statement.

\begin{lemma}\label{m4k}
Denote $(\chi^r_m)$ be a decreasing sequence of $\mathcal{G}^r_{k/2,m,*}(S)$  for any $k, m \in \mathbb{Z}_{>0}$ {\rm(resp.} $\mathcal{Q}^r_{k/2,l,m,*}(S)$ for any $k, l, m \in \mathbb{Z}_{>0} )$.
The complement $\bigsqcup_{i\geq m}\chi^r_i - \chi^r_{m+1}$ is dense in $\bigsqcup_{i\geq m}\chi^r_i$.
Under the non-existence of fake limit cycles, the complement $\bigsqcup_{i\geq m}\chi^r_i - \chi^r_{m+1}$ is open in $\bigsqcup_{i\geq m}\chi^r_i$.
\end{lemma}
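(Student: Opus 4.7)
My plan is to iterate the perturbation techniques of Lemma~\ref{m4} and Lemma~\ref{m45}. Under the natural reading that makes the statement coherent, $\chi^r_m$ parametrizes a decreasing sequence of subsets---flows with at least $m$ multi-saddle separatrices outside $\partial S$---so that $\bigsqcup_{i\geq m}\chi^r_i$ coincides with $\chi^r_m$ and the complement $\chi^r_m\setminus\chi^r_{m+1}$ is the stratum of flows with exactly $m$ such separatrices.

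For density, take $v\in\chi^r_m$ with $k\geq m$ interior multi-saddle separatrices. If $k=m$, then $v$ already lies in the complement; otherwise pick one interior multi-saddle separatrix $\gamma$ of $v$. Exactly as in the proof of Lemma~\ref{m4}, invoke \cite{yokoyama2017decompositions}*{Corollary~7.8} to write $S\setminus\mathop{\mathrm{BD}}_+(v)$ as a disjoint union of open transverse annuli and open invariant flow boxes, pick a component bordered by $\gamma$, and apply there the $C^r$-small localized annihilation perturbation of Figure~\ref{perturbation} (or of Figure~\ref{perturbation02} when $\gamma$ lies in a limit circuit). This destroys $\gamma$ alone, preserves every other singular point, limit cycle, and separatrix, and keeps the perturbed flow in the ambient class. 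Iterating $k-m$ times yields an arbitrarily $C^r$-close flow in $\chi^r_m\setminus\chi^r_{m+1}$.

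For openness under the no-fake-limit-cycles hypothesis, fix $v\in\chi^r_m\setminus\chi^r_{m+1}$, i.e.\ $v$ has exactly $m$ interior multi-saddle separatrices. Lemma~\ref{m2} yields $C^r$-stability of the topological type of every singular point, and the proof of Lemma~\ref{m45}, together with the topological hyperbolicity of each limit cycle guaranteed by the absence of fake limit cycles, yields analogous stability of every limit cycle. Each non-heteroclinic semi-multi-saddle separatrix of $v$ has its $\omega$- or $\alpha$-limit set among the sinks, $\partial$-sinks, sources, $\partial$-sources, and limit cycles of $v$, which all sit at positive distance from every multi-saddle. The previous stabilities promote this pointwise separation to a $C^r$-uniform bound on a neighborhood of $v$, so no sufficiently small perturbation can create a new interior multi-saddle connection. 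The $m$ existing connections may break under perturbation, but any such breakage strictly decreases the count and ejects the flow from $\chi^r_m$. Hence every flow in a sufficiently small $C^r$-neighborhood of $v$ that still belongs to $\chi^r_m$ carries exactly $m$ interior multi-saddle separatrices, proving openness.

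The principal obstacle is the no-creation half of the openness step, where the pointwise spatial separation between non-heteroclinic orbits and the multi-saddles must be upgraded to a $C^r$-uniform bound stable under perturbation. This is precisely where the no-fake-limit-cycles hypothesis is indispensable: the collapse of a fake limit cycle would reshape the basin boundaries of the remaining sinks and sources and could spawn new heteroclinic connections at the multi-saddles, which would violate the stratification count in an arbitrarily small neighborhood of $v$.
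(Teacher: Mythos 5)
Your proposal is correct and follows essentially the same route as the paper, whose entire proof of this lemma is the single sentence that the proof of Lemma~\ref{m4} implies an analogous statement; you have simply made explicit the iteration of the annihilation perturbations for density and the no-creation/no-increase argument (recorded in the paper as the remark preceding Lemma~\ref{m45}) for openness, including the correct role of the no-fake-limit-cycle hypothesis. Your reading of the ambiguously stated stratification, under which the complement is the stratum of flows with exactly $m$ interior multi-saddle separatrices, is the one the paper intends.
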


\section{Generic non-Morse gradient flows}

In this section, we characterize and list all generic intermediate Morse-like flows between gradient flows.

\subsection{Generic transition rules of Morse flows}

Splittings of multi-saddles as in Figure~\ref{split_multi_saddle} imply the following observations.
\begin{figure}
\begin{center}
{\includegraphics[height=0.2\textheight]{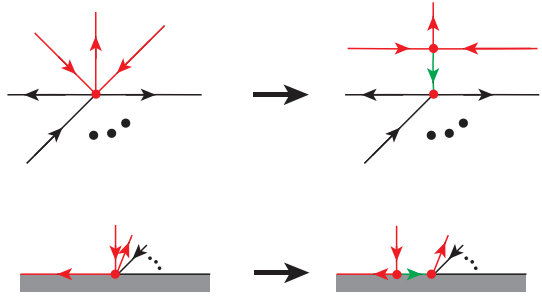}}
\end{center}
\caption{Splittings of multi-saddles creating multi-saddle separatrices.}
\label{split_multi_saddle}
\end{figure}

\begin{lemma}\label{lem:multi-saddle}
Let $v \in \mathcal{G}_{k/2,*}^r(S)$ be a flow for any $r \in \mathbb{Z}_{\geq 0}$.
Any $(1+k)$-saddle {\rm(resp.} $(1+k)/2$-$\partial$-saddle{\rm)} for any positive integer $k$ can be split into a pair of a saddle and a $k$-saddle {\rm(resp.} $\partial$-saddle and $k/2$-$\partial$-saddle{\rm)} with a multi-saddle separatrix between them by an arbitrarily small $C^r$-perturbation.
\end{lemma}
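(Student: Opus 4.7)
The plan is to localize near the multi-saddle $p$ of $v$ and construct an explicit $C^r$-small perturbation that realizes the picture in Figure~\ref{split_multi_saddle}. First take a small closed disk neighborhood $D$ of $p$ with $D \cap \mathop{\mathrm{Sing}}(v) = \{p\}$ whose boundary is transverse to $v$ except for finitely many tangencies (via Lemma~\ref{almost_closed_transverse} and Lemma~\ref{lem:almost_transverse}); by Theorem~\ref{main:a} together with $v \in \mathcal{G}^r_{k/2,*}(S)$, the restriction $v|_D$ consists of exactly $2(k+2)$ hyperbolic sectors (no elliptic or fake parabolic ones, since $p$ is not fake), and by Gutierrez's smoothing theorem $v|_D$ is topologically equivalent to the gradient flow of $h_0(z)=\mathrm{Re}(z^{k+2})/(k+2)$ on a small disk $D_0\subset\mathbb{C}$.

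In this standard model, I would use the one-parameter family $h_\epsilon(z)=\mathrm{Re}(g_\epsilon(z))$, where $g_\epsilon$ is the primitive of $(z-a_\epsilon)(z-b_\epsilon)^k$ vanishing at $b_\epsilon$, with $a_\epsilon - b_\epsilon = \epsilon\, e^{i\pi/(2(k+2))}$ and $a_\epsilon + b_\epsilon = 0$. A direct integration gives
\[
g_\epsilon(a_\epsilon) - g_\epsilon(b_\epsilon) \;=\; -\,\frac{(a_\epsilon-b_\epsilon)^{k+2}}{(k+1)(k+2)},
\]
whose real part vanishes by the choice of argument. Consequently $h_\epsilon$ has a simple saddle at $a_\epsilon$ and a $k$-saddle at $b_\epsilon$, and equality of their critical values produces a heteroclinic separatrix joining them inside the level set $\{h_\epsilon = h_\epsilon(a_\epsilon)\}$. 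Since $g_\epsilon' \to g_0'$ in every $C^r$-norm on compact subsets of $\mathbb{C}$ as $\epsilon\to 0$, so do the associated gradient vector fields.

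To transplant this into $v$, I would multiply the difference $\nabla h_\epsilon - \nabla h_0$ by a smooth bump function supported in a small subdisk of $D_0$ and insert it into the generating vector field of $v$, using the sectored structure around $p$ to match the original flow on $\partial D$. The boundary case $(1+k)/2$-$\partial$-saddle reduces to the interior case by doubling $S$ across $\partial S$ and choosing $a_\epsilon, b_\epsilon$ symmetric under the natural involution, so that the equivariant perturbation descends. The main obstacle is that the topological conjugacy between $v|_D$ and the harmonic model is only a homeomorphism, so a literal pullback is not $C^r$-smooth; this is handled by constructing the perturbed vector field $X_\epsilon$ directly from the generating $C^r$ vector field $X$, modifying $X$ only inside an arbitrarily small flow box around a chosen pair of adjacent separatrices of $p$ and obtaining $C^r$-smallness by a standard bump-and-rescale estimate that shrinks the support along with $\epsilon$.
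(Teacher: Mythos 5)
There is a genuine error at the heart of your model computation: you impose the wrong phase condition, and the perturbation you construct does \emph{not} produce the multi-saddle separatrix that the lemma requires. For the gradient flow $\dot z=\overline{g_\epsilon'(z)}$ of $h_\epsilon=\mathrm{Re}(g_\epsilon)$, one has $\frac{d}{dt}g_\epsilon(z(t))=|g_\epsilon'(z(t))|^2\ge 0$, so $\mathrm{Im}(g_\epsilon)$ is a first integral and $\mathrm{Re}(g_\epsilon)$ is strictly increasing along non-constant trajectories. Hence a heteroclinic separatrix joining $a_\epsilon$ and $b_\epsilon$ forces $\mathrm{Im}(g_\epsilon(a_\epsilon))=\mathrm{Im}(g_\epsilon(b_\epsilon))$ and $\mathrm{Re}(g_\epsilon(a_\epsilon))\neq\mathrm{Re}(g_\epsilon(b_\epsilon))$; in particular separatrices are \emph{not} contained in level sets of $h_\epsilon$ (they are orthogonal to them), so your sentence ``equality of their critical values produces a heteroclinic separatrix inside $\{h_\epsilon=h_\epsilon(a_\epsilon)\}$'' confuses the gradient flow with the Hamiltonian flow of $h_\epsilon$. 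Your choice $a_\epsilon-b_\epsilon=\epsilon e^{i\pi/(2(k+2))}$ makes $(a_\epsilon-b_\epsilon)^{k+2}=i\epsilon^{k+2}$, so $g_\epsilon(a_\epsilon)-g_\epsilon(b_\epsilon)$ is purely imaginary and nonzero: the two saddles then have equal $h_\epsilon$-values but lie on \emph{different} level curves of $\mathrm{Im}(g_\epsilon)$, so no trajectory can connect them. You have in fact constructed the generic splitting \emph{without} a saddle connection (useful for Lemma~\ref{m4}, not here). The fix is to take $(k+2)\arg(a_\epsilon-b_\epsilon)\equiv 0\ (\mathrm{mod}\ \pi)$, e.g.\ $a_\epsilon-b_\epsilon=\epsilon$ real; then $g_\epsilon(a_\epsilon)-g_\epsilon(b_\epsilon)=-\epsilon^{k+2}/((k+1)(k+2))$ is real and nonzero, both critical points lie on $\{\mathrm{Im}(g_\epsilon)=0\}$, and the open segment $(b_\epsilon,a_\epsilon)$ of the real axis is invariant and is the desired heteroclinic multi-saddle separatrix.

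Two further, smaller points. First, your final localization step says you modify $X$ ``only inside an arbitrarily small flow box around a chosen pair of adjacent separatrices of $p$''; a flow box contains no singular points, and a perturbation supported away from $p$ leaves the germ of $X$ at $p$, hence the local sector structure of $p$, unchanged, so it cannot split $p$. The support must be a neighborhood of $p$ itself, and you then need to argue (as you begin to) that the resulting flow is still gradient, quasi-regular, without fake saddles, and has the same index sum $k/2$, so that it stays in $\mathcal{G}^r_{k/2,*}(S)$. Second, for comparison: the paper offers no argument for this lemma beyond the reference to Figure~\ref{split_multi_saddle}, so your attempt to make the splitting explicit via the local models $\mathrm{Re}(z^{k+2})$ and $(z-a_\epsilon)(z-b_\epsilon)^k$ is a genuinely different and potentially more rigorous route; but as written the key step selects the wrong member of the unfolding family.
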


Recall $\mathcal{M}_{k/2}^r(S)^c = \mathcal{G}_{k/2,*}^r(S) - \mathcal{M}_{k/2}^r(S)$.
Theorem~\ref{main:Morse} implies the following observation.

\begin{lemma}\label{lem:Morse_dic}
A flow $v \in \mathcal{G}_{k/2,*}^r(S)$ for any $r \in \mathbb{Z}_{>0}$ is not Morse \rm{(i.e.} $v \in \mathcal{M}_{k/2}^r(S)^c${\rm)} if and only if  one of the following statements holds:
\\
$(1)$ $\mathrm{P}_{\mathrm{ms}}(v) \not\subset \partial S$ {\rm(i.e.} there are multi-saddle separatrices outside of $\partial S$ {\rm)}.
\\
$(2)$ There are multi-saddles that are not topologically hyperbolic.
\end{lemma}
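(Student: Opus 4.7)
The plan is to deduce the lemma directly from Theorem~\ref{main:Morse} by unpacking what the hypothesis $v \in \mathcal{G}_{k/2,*}^r(S)$ buys us. By definition, any flow in $\mathcal{G}_{k/2,*}^r(S)$ is a quasi-regular $C^r$ gradient flow with finitely many singular points and without fake saddles, whose sum of indices of attracting/repelling singular points equals $k/2$. Since $v$ is gradient, it has no periodic orbits and no non-closed recurrent orbits, and by Theorem~\ref{main:c} it is automatically Morse-Smale-like without elliptic sectors or non-trivial circuits; in particular it is Morse-like. Thus the characterization in Theorem~\ref{main:Morse} collapses to the following for our $v$: the flow $v$ is Morse if and only if $v$ is regular and $\mathrm{P}_{\mathrm{ms}}(v) \subset \partial S$.

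Next I would translate "regular" into the condition stated in (2). Quasi-regularity of $v$ together with Theorem~\ref{main:a} implies that every singular point of $v$ is a non-trivial finitely sectored singular point whose sectors are hyperbolic or parabolic (no elliptic sectors), and the absence of fake saddles and fake parabolic sectors forces each singular point to be either built out of parabolic sectors only (a $(\partial$-$)$sink or $(\partial$-$)$source) or out of hyperbolic sectors only (a multi-saddle, i.e. a $k$-saddle or $k/2$-$\partial$-saddle for some $k \in \mathbb{Z}_{>0}$). Among these, the topologically hyperbolic singular points are exactly the $(\partial$-$)$sinks, $(\partial$-$)$sources, $1$-saddles, and $1/2$-$\partial$-saddles. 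Hence $v$ is regular if and only if every multi-saddle of $v$ is a $1$-saddle or a $1/2$-$\partial$-saddle, equivalently, every multi-saddle is topologically hyperbolic.

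Combining these two observations, the negation of "Morse" for $v \in \mathcal{G}_{k/2,*}^r(S)$ is precisely the disjunction "not regular" or "$\mathrm{P}_{\mathrm{ms}}(v) \not\subset \partial S$", which is exactly (1) or (2). The forward implication (each of (1), (2) forces $v \notin \mathcal{M}^r_{k/2}(S)$) is immediate from Theorem~\ref{main:Morse}, and the converse is the contrapositive of the same equivalence. There is essentially no hidden obstacle; the only subtlety is confirming that under the standing hypothesis $v \in \mathcal{G}_{k/2,*}^r(S)$ no other failure mode of the Morse property can occur (e.g. centers, limit cycles, or fake parabolic sectors), which is ruled out by the gradient, quasi-regular, and no-fake-saddle conditions built into the definition of $\mathcal{G}_{k/2,*}^r(S)$ together with Theorem~\ref{main:a}.
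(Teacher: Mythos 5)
Your proposal is correct and takes the same route as the paper: the paper offers no written proof beyond the remark that Theorem~\ref{main:Morse} implies the lemma, and your argument supplies exactly that derivation --- using the gradient/quasi-regular/no-fake-saddle hypotheses (via Theorems~\ref{main:a} and~\ref{main:c}) to reduce ``Morse'' to ``regular and $\mathrm{P}_{\mathrm{ms}}(v) \subset \partial S$,'' and then identifying failure of regularity with the existence of a topologically non-hyperbolic multi-saddle. The only minor quibble is that the dichotomy ``each singular point is a multi-saddle or a ($\partial$-)sink/source'' already follows from quasi-regularity plus the gradient condition (which excludes centers), rather than from the absence of fake saddles and fake parabolic sectors; this does not affect the argument.
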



\subsection{Proof of Theorem~\ref{main:e}}

Fix a flow $v \in \mathcal{M}_{k/2}^r(S)^c$.
Lemma~\ref{lem:Morse_dic} implies that either $\mathrm{P}_{\mathrm{ms}}(v) \not\subset \partial S$ or $v$ has multi-saddles which are not topologically hyperbolic.

We claim that the density holds.
Indeed, suppose that $\mathrm{P}_{\mathrm{ms}}(v) \not\subset \partial S$ {\rm(i.e.} there are multi-saddle separatrices outside of $\partial S$ {\rm)}.
Then we can split such multi-saddles into saddles and $\partial$-saddles with multi-saddle separatrices by arbitrarily small $C^r$-perturbations as in Figure~\ref{split_multi_saddle} because of Lemma~\ref{lem:multi-saddle}.
Applying such splittings finitely many times, we obtain a regular flow in $\mathcal{M}_{k/2}^r(S)^c$ such that there are multi-saddle separatrices outside of $\partial S$.
Lemma~\ref{m4} implies that we can obtain an $h$-unstable flow in $\mathcal{G}_{k/2,*}(S)$.
Thus we may assume that $v$ has topological non-hyperbolic multi-saddles.
By Lemma~\ref{m4}, by perturbing multi-saddle separatrices, we may assume that the multi-saddle separatrices are contained in $\partial S$.
From Lemma~\ref{lem:multi-saddle}, we can split any multi-saddles on (resp. outside of) $\partial S$ into $\partial$-saddles (resp. saddle with multi-saddle separatrices) by arbitrarily small $C^r$-perturbations.
Thus we may assume that any topological non-hyperbolic multi-saddles are contained in $\partial S$.
Splitting all multi-saddles except one, we obtain a resulting flow in $\mathcal{G}_{k/2,*}(S)$ which is $p$-unstable.

We claim that the openness holds.
Indeed, suppose that $v$ is $p$-unstable.
The condition of fixing the sum of indices of parabolic sectored singular points implies that each topological hyperbolic singular point is preserved by any small perturbations in $\mathcal{G}_{k/2,*}^r(S)$.
Since the non-existence of multi-saddle separatrices outside of $\partial S$ are open, the $1$-$\partial$-saddle are preserved by any small perturbations in $\mathcal{M}_{k/2}^r(S)^c$.
This means that the $p$-unstable flows in $\mathcal{M}_{k/2}^r(S)^c$ forms an open subset.
Suppose that $v$ is $h$-unstable.
As above, each topological hyperbolic singular point is preserved by any small perturbations in $\mathcal{G}_{k/2,*}^r(S)$.
From Lemma~\ref{lem:Morse_dic}, the unique existence of a multi-saddle separatrix outside of $\partial S$ is open in $\mathcal{M}_{k/2}^r(S)^c$.
This means that the $h$-unstable flows in $\mathcal{M}_{k/2}^r(S)^c$ forms an open subset.

\subsection{Proof of Theorem~\ref{th:generic_gradient_trans}}
From Theorem~\ref{th:open_dense_M}, the subset of Morse flows in $\mathcal{G}^r_{k/2,*}(S)$ are open dense in $\mathcal{G}^r_{k/2,*}(S)$.
Since any gradient flows has no homoclinic separatrices, any multi-saddle separatrices are  heteroclinic.
Theorem~\ref{main:e} says that any generic gradient flows in $\mathcal{G}_{k/2,*}(S)$ are $h$-unstable or $p$-unstable.
Since each heteroclinic multi-saddle separatrix connects between saddles and $\partial$-saddles, all generic transitions among gradient flows in $\mathcal{G}^r_{k/2,*}(S)$ are listed as in Figure~\ref{trans_all}.

\section{Generic non-Morse-Smale ``gradient flows with limit cycles''}

In this section, we characterize and list all generic intermediate Morse-Smale-like flows between gradient flows.

\subsection{Generic transitions of Morse-Smale flows}

Theorem~\ref{MS} implies the following observations.

\begin{lemma}\label{lem:Morse_dic_ms}
For any $r \in \mathbb{Z}_{\geq 0}$, a flow $v \in \mathcal{Q}_{k/2,*}^r(S)$ is not Morse-Smale if and only if one of the following statements holds:
\\
$(1)$ $\mathrm{P}_{\mathrm{ms}}(v) \not\subset \partial S$ {\rm(i.e.} there are multi-saddle separatrices outside of $\partial S$ {\rm)}.
\\
$(2)$ There are multi-saddles that are not topologically hyperbolic.
\\
$(3)$ There are fake limit cycles.
\end{lemma}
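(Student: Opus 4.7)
The plan is to derive Lemma~\ref{lem:Morse_dic_ms} as a direct biconditional consequence of Theorem~\ref{MS} (equivalently Theorem~\ref{main:d}). By definition, the class $\mathcal{Q}_{k/2,*}^r(S)$ consists of quasi-regular Morse-Smale-like flows without fake saddles, so Theorem~\ref{MS}(2) reduces the question of whether $v$ is Morse-Smale to three residual conditions on $v$: regularity, topological hyperbolicity of every limit cycle, and $\mathrm{P}_{\mathrm{ms}}(v)\subset\partial S$. Both directions of the desired equivalence will come from matching each of these three conditions with one of (1), (2), (3).

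For the ``if'' direction I would simply check each clause against Theorem~\ref{MS}(2): condition (1) directly contradicts the requirement $\mathrm{P}_{\mathrm{ms}}(v)\subset\partial S$; condition (3) produces a limit cycle that is semi-attracting on one side and semi-repelling on the other, hence not topologically hyperbolic; and condition (2) exhibits a singular point that is not among the non-degenerate types in Figure~\ref{non-deg-sing}, violating regularity. Any of these obstructs Morse-Smaleness.

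For the ``only if'' direction I would assume $v$ is not Morse-Smale and apply Theorem~\ref{MS}(2) to get the failure of at least one of the three residual conditions. Failure of $\mathrm{P}_{\mathrm{ms}}(v)\subset\partial S$ is (1) by definition. Failure of topological hyperbolicity of a limit cycle means, on a surface, that the cycle is semi-attracting on one side and semi-repelling on the other, which is precisely the definition of a fake limit cycle, giving (3). Failure of regularity requires more care: quasi-regularity restricts singular points to multi-saddles, centers, and ($\partial$-)sinks and ($\partial$-)sources, while the ``$*$'' excludes $0$-saddles and $0$-$\partial$-saddles.

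The one point that deserves a small argument, and which I view as the only mild obstacle, is that a Morse-Smale-like flow cannot contain a center: a center $x$ would have a punctured annular neighborhood consisting entirely of periodic orbits, contradicting the open density of $\mathrm{P}(v)$ in the definition of a Morse-Smale-like flow (and the finiteness of $\Cv$ after invoking Lemma~\ref{lem:ch_MSL}). Once centers are ruled out, the only remaining non-topologically-hyperbolic singularities are $k$-saddles or $k/2$-$\partial$-saddles with $k\ge 2$, yielding (2) and completing the proof.
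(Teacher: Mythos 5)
Your proposal is correct and matches the paper's approach exactly: the paper offers no written proof beyond the remark that Theorem~\ref{MS} implies the lemma, and your derivation from Theorem~\ref{MS}(2) --- matching conditions (1)--(3) to the failure of the boundary condition $\mathrm{P}_{\mathrm{ms}}(v)\subset\partial S$, of regularity, and of topological hyperbolicity of limit cycles, respectively --- is precisely the intended argument. Your additional observation that a Morse-Smale-like flow cannot contain a center (via the density of $\mathrm{P}(v)$), which is needed to reduce the failure of regularity within the quasi-regular, fake-saddle-free class to a degenerate multi-saddle, fills in the only detail the paper leaves implicit.
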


\begin{corollary}\label{cor:Morse_dic_ms}
For any $r \in \mathbb{Z}_{\geq 0}$, a flow $v \in \mathcal{Q}_{k/2,**}^r(S)$ is not Morse-Smale \rm{(i.e.} $v \in \mathcal{MS}_{k/2,l}^r(S)^c${\rm)} if and only if  one of the following statements holds:
\\
$(1)$ $\mathrm{P}_{\mathrm{ms}}(v) \not\subset \partial S$.
\\
$(2)$ There are multi-saddles that are not topologically hyperbolic.
\end{corollary}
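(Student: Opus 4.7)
The plan is to deduce this corollary directly from Lemma~\ref{lem:Morse_dic_ms} by observing that the hypothesis $v \in \mathcal{Q}_{k/2,**}^r(S)$ is strictly stronger than $v \in \mathcal{Q}_{k/2,*}^r(S)$: the double-star subscript, by the definition introduced earlier in the paper, enforces the additional requirement that $v$ has no fake limit cycles and no fake saddles. Since $\mathcal{Q}_{k/2,**}^r(S) \subseteq \mathcal{Q}_{k/2,*}^r(S)$, Lemma~\ref{lem:Morse_dic_ms} applies verbatim to any $v$ satisfying our hypothesis, and it characterizes the failure of the Morse-Smale property by the trichotomy: (1) $\mathrm{P}_{\mathrm{ms}}(v) \not\subset \partial S$, (2) presence of topologically non-hyperbolic multi-saddles, or (3) presence of fake limit cycles.

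For the forward direction, I would assume $v$ is not Morse-Smale and invoke Lemma~\ref{lem:Morse_dic_ms} to obtain one of the three alternatives. By the definition of $\mathcal{Q}_{k/2,**}^r(S)$, fake limit cycles are explicitly forbidden, so alternative (3) cannot occur. Therefore (1) or (2) must hold, which is precisely the stated conclusion. For the backward direction, either (1) or (2) from the corollary immediately implies that one of the three conditions of Lemma~\ref{lem:Morse_dic_ms} holds, so $v$ fails to be Morse-Smale; this direction requires no use of the fake limit cycle exclusion.

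There is no real obstacle here since the proof is a one-line reduction to the preceding lemma. The only thing worth verifying carefully is the bookkeeping of the subscript conventions: confirming that the double star in $\mathcal{Q}_{k/2,**}^r(S)$ inherits the same meaning as in $\mathcal{Q}_{k/2,l,**}^r(S)$ (namely the exclusion of fake saddles and fake limit cycles), and that the omission of the index $l$ simply drops the constraint on the number of limit circuits. Once this convention is noted, the corollary is immediate.
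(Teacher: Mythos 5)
Your proposal is correct and coincides with the paper's (implicit) argument: the corollary is stated as an immediate consequence of Lemma~\ref{lem:Morse_dic_ms}, obtained exactly as you describe by noting that the double-star class $\mathcal{Q}_{k/2,l,**}^r(S)$ excludes fake limit cycles by definition, so alternative $(3)$ of the lemma is vacuous and the trichotomy collapses to the stated dichotomy. Your remark on the subscript bookkeeping (the double star carrying the exclusion of fake saddles and fake limit cycles, and the dropped $l$ merely removing the constraint on the number of limit circuits) is the right reading of the paper's notation.
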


We generalize $p$-unstability and $h$-unstability for Morse-Smale-like flows as follows:
A flow in $\mathcal{Q}_{k/2,l,**}(S)$ which satisfies the conditions $(\mathrm{M}3)$ and $(\mathrm{M}4)$ is $p$-unstable if all singular points except one $1$-$\partial$-saddle are topologically hyperbolic.
In other words, a flow $v$ is $p$-unstable in $\mathcal{Q}_{k/2,l,**}(S)$ if and only if $v$ is a quasi-regular Morse-Smale-like flow $v$ with $\mathrm{P}_{\mathrm{ms}}(v) \subset \partial S$ without topologically non-hyperbolic limit cycles such that all singular points except one $1$-$\partial$-saddle are topologically hyperbolic.
A flow in $\mathcal{Q}_{k/2,**}(S)$ which satisfies the conditions $(\mathrm{m}2_{+++})$ and $(\mathrm{M}3)$ is $h$-unstable if there are exactly one multi-saddle separatrix outside of the boundary $\partial S$.
In other words, a flow $v$ is $h$-unstable in $\mathcal{Q}_{k/2,l,**}(S)$ if and only if $v$ is a regular Morse-Smale-like flow $v$ without topologically non-hyperbolic limit cycles such that there is exactly one multi-saddle separatrix outside of the boundary $\partial S$.
By definition, any $p$-unstable flow and any $h$-unstable flow in $\mathcal{Q}_{k/2,**}(S)$ are contained in $\mathcal{MS}_{k/2,l}^r(S)^c$.
%
As the gradient flow case, we have the following statements

\begin{lemma}\label{lem:multi-saddle_ms}
Let $v \in \mathcal{Q}_{k/2,**}^r(S)$ be a flow for any $r \in \mathbb{Z}_{\geq 0}$.
Any $(1+k)$-saddle {\rm(resp.} $(1+k)/2$-$\partial$-saddle{\rm)} for any positive integer $k$ can be split into a pair of a saddle and a $k$-saddle {\rm(resp.} $\partial$-saddle and $k/2$-$\partial$-saddle{\rm)} with a multi-saddle separatrix between them by an arbitrarily small $C^r$-perturbation.
\end{lemma}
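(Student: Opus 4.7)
The plan is to reduce to the gradient case Lemma~\ref{lem:multi-saddle} by localizing the perturbation to a small flow-box-like neighborhood of the multi-saddle. Since a multi-saddle is by Theorem~\ref{main:a} a finitely sectored singular point whose sectors are all hyperbolic, its topological structure near the point depends only on the local arrangement of separatrices, not on whether the global flow is gradient or merely quasi-regular Morse-Smale-like. Hence I would first fix a $(1+k)$-saddle (resp.\ $(1+k)/2$-$\partial$-saddle) $x$ of $v$ and choose an open disk neighborhood $D$ of $x$, chosen small enough so that $\overline D$ is disjoint from every other singular point of $v$, from every limit cycle of $v$, and from every non-trivial circuit with non-trivial holonomy. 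This is possible because $v \in \mathcal{Q}^r_{k/2,**}(S)$ has only finitely many singular points, limit cycles and by Lemma~\ref{lem:fin_hol} only finitely many non-trivial circuits with non-trivial holonomy.

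Next I would perform the explicit local splitting shown in Figure~\ref{split_multi_saddle}: replace the vector field (or flow) inside $D$ by a $C^r$-small deformation supported in $D$ which creates one saddle and one $k$-saddle (resp.\ one $\partial$-saddle and one $k/2$-$\partial$-saddle) joined by a single new multi-saddle separatrix, while leaving the flow unchanged outside a slightly smaller disk. Since the index of a $(1+k)$-saddle is $-(1+k) = -1 + (-k)$ and the index of a $(1+k)/2$-$\partial$-saddle is $-(1+k)/2 = -1/2 + (-k/2)$, the Poincar\'e-Hopf sum of indices of the new singular points inside $D$ equals that of the original $x$; combined with the fact that the perturbation is supported in $D$, this preserves the total index count, so in particular the sum of indices of attracting/repelling singular points remains $k/2$.

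Then I would verify that the perturbed flow $w$ still lies in $\mathcal{Q}^r_{k/2,**}(S)$. The singular points of $w$ are those of $v$ outside $D$ together with the two new multi-saddles inside $D$, so $w$ still has finitely many, quasi-regular, finitely sectored singular points and no fake saddles (the two new points are a saddle/$\partial$-saddle and a $k$-saddle/$k/2$-$\partial$-saddle, none of which is a $0$-($\partial$-)saddle since $k \geq 1$). Because the perturbation is supported in $D$ and $D$ meets no limit cycles, the limit cycles of $w$ are exactly those of $v$, and in particular the fake-limit-cycle-free condition and the count of limit circuits are preserved. Finally, by Lemma~\ref{transversality} applied to a transverse loop bounding $D$ one sees that orbits entering and leaving $D$ still do so in the same combinatorial pattern, so $w$ remains Morse-Smale-like, hence $w \in \mathcal{Q}^r_{k/2,**}(S)$.

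The main technical point, and the only step where one must be careful, is ensuring that the localized splitting can actually be realized by an arbitrarily small $C^r$ perturbation of the generating vector field for every $r \geq 0$; for $r \geq 1$ this is a standard bump-function construction in the flow-box coordinates around $x$, and for $r = 0$ one uses the $C^0$ flow equivalence provided by Gutierrez's smoothing theorem (already invoked in the proof of Lemma~\ref{almost_closed_transverse}) to transfer the smooth local model into the topological setting. Everything else is a direct rerun of the argument establishing Lemma~\ref{lem:multi-saddle}, and no new global obstruction arises because limit cycles and non-orientable-holonomy circuits have been excluded from the support of the perturbation by the initial choice of $D$.
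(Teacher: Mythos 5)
Your proposal is correct and follows essentially the same route as the paper, which offers no written proof of this lemma at all: it is presented as an observation read off from the local splitting in Figure~\ref{split_multi_saddle} ``as the gradient flow case'', i.e.\ exactly the localized surgery you carry out in detail. The only point to adjust is your choice of $D$: you cannot always make $\overline{D}$ disjoint from every non-trivial circuit with non-trivial holonomy, since the multi-saddle $x$ may itself be a vertex of such a circuit (for instance of a non-periodic limit circuit), so it suffices to require $D$ to avoid the other singular points and the periodic limit cycles and then to check directly that the splitting preserves the number of limit circuits.
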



\subsection{Proof of Theorem~\ref{main:f}}
%
Fix a flow $v \in \mathcal{MS}_{k/2,l}^r(S)^c$.
Corollary~\ref{cor:Morse_dic_ms} implies that either $\mathrm{P}_{\mathrm{ms}}(v) \not\subset \partial S$ or there are multi-saddles that are not topologically hyperbolic.
The finite existence of limit cycles implies the isolated property of limit cycles.

We claim that the density holds.
Indeed, suppose that there are multi-saddles on the boundary which are not topologically hyperbolic.
From Lemma~\ref{lem:multi-saddle_ms}, we can split all topologically non-hyperbolic multi-saddles into saddles and $\partial$-saddles with multi-saddle separatrices by arbitrarily small $C^r$-perturbations as in Figure~\ref{split_multi_saddle}.
Therefore we may assume that $v$ has no topologically non-hyperbolic multi-saddles except one $1$-$\partial$-saddle.
As the proof of Lemma~\ref{m4}, by arbitrarily small $C^r$-perturbations, we can annihilate any multi-saddle separatrices outside of $\partial S$ except $\gamma$.
Then the resulting flow is $p$-unstable.
Suppose that any multi-saddles on the boundary are topologically hyperbolic.
From Lemma~\ref{lem:multi-saddle_ms}, we can split all topologically non-hyperbolic multi-saddles into saddles and $\partial$-saddles with multi-saddle separatrices by arbitrarily small $C^r$-perturbations as in Figure~\ref{split_multi_saddle}.
Therefore we may assume that $v$ has multi-saddle separatrices.
Fix a multi-saddle separatrix $\gamma \not\subset \partial S$.
As the proof of Lemma~\ref{m4}, by arbitrarily small $C^r$-perturbations, we can annihilate any multi-saddle separatrices outside of $\partial S$ except $\gamma$.
Then the resulting flow is $h$-unstable.

We claim that the openness holds.
Indeed, suppose that $v$ is $p$-unstable.
The condition of fixing the sum of indices of parabolic sectored singular points implies that each topological hyperbolic singular point is preserved by any small perturbations in $\mathcal{Q}_{k/2,l,**}^r(S)$.
Since the non-existence of multi-saddle separatrices outside of $\partial S$ is open, the $1$-$\partial$-saddle are preserved by any small perturbations in $\mathcal{MS}_{k/2,l}^r(S)^c$.
This means that $p$-unstable flows in $\mathcal{MS}_{k/2,l}^r(S)^c$ forms an open subset.
Suppose that $v$ is $h$-unstable.
As above, each topological hyperbolic singular point is preserved by any small perturbations in $\mathcal{Q}_{k/2,l,**}^r(S)$.
From Lemma~\ref{lem:Morse_dic}, the unique existence of a multi-saddle separatrix outside of $\partial S$ is open in $\mathcal{MS}_{k/2,l}^r(S)^c$.
This means that $h$-unstable flows in $\mathcal{MS}_{k/2,l}^r(S)^c$ forms an open subset.


%
\subsection{Proof of Theorem~\ref{th:generic_MS_trans}}

From Theorem~\ref{th:open_dense_MS}, the subset of Morse-Smale flows in $\mathcal{Q}_{k/2,l,**}(S)$ are open dense in $\mathcal{Q}_{k/2,l,**}(S)$.
Theorem~\ref{main:g} says that any generic Morse-Smale-like flows in $\mathcal{Q}_{k/2,l,**}(S)$ are $h$-unstable or $p$-unstable.
All homoclinic separatrices of saddles in $h$-unstable flows are listed on the left in Figure~\ref{trans_limit_all}, and all heteroclinic separatrices of $\partial$-saddles in limit circuits of $h$-unstable flows are listed on the right in Figure~\ref{trans_limit_all}.
Since each heteroclinic multi-saddle separatrix connects between saddles and $\partial$-saddles, all generic transitions between Morse-Smale flows in $\mathcal{Q}_{k/2,l,**}(S)$ are listed in Figure~\ref{trans_all}--\ref{trans_limit_all}.



\section{Observation for a complete invariant of Morse-Smale-like flows}

In the previous sections, we describe generic intermediate flows between gradient flows and between Morse-Smale flows using Morse-Smale-like flows.
Finally, we observe that such intermediate flows can be classified by a finite complete invariant of Morse-Smale-like flows.
Indeed, a Poincar\'e-Bendixson theorem for a flow with finitely many singular points and  \cite{yokoyama2017decompositions}*{Corollary 11.11} imply the following statement.

\begin{lemma}
Each connected component of $S - \mathop{\mathrm{BD}}_1(v) \subseteq \mathrm{P}(v)$ for a Morse-Smale-like flow $v$ on a compact surface $S$ is one of the following statements:
\\
$(1)$
An open trivial flow box in $\mathrm{P}(v)$, whose orbit space is an interval,
\\
$(2)$
An open annulus in $\mathrm{P}(v)$, whose orbit space is a circle,

Moreover, the $\omega$-limit {\rm(resp.} $\alpha$-limit{\rm)} set of a point in a connected component in $\mathrm{P}(v) \setminus \mathop{\mathrm{BD}}_1(v)$ is the $\omega$-limit {\rm(resp.} $\alpha$-limit{\rm)} set of any point in the connected component, which is either a singular point or a limit circuit.
\end{lemma}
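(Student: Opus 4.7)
The plan is to apply the general decomposition theorem for flows of weakly finite type cited as \cite{yokoyama2017decompositions}*{Corollary 11.11}, which lists all possible topological types of connected components of $S - \mathop{\mathrm{BD}}_1(v)$. Since a Morse-Smale-like flow is of weakly finite type, that corollary applies directly, and the task reduces to eliminating the types incompatible with the Morse-Smale-like hypothesis and extracting the stated properties of limit sets.

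First, I would observe that by Morse-Smale-likeness we have $\mathrm{R}(v) = \emptyset$, so $S = \mathop{\mathrm{Cl}}(v) \sqcup \mathrm{P}(v)$, and by Corollary~\ref{cor:per_lc} every periodic orbit is a limit cycle, hence already lies in $\mathop{\mathrm{BD}}_1(v)$. Combined with the fact that all singular points, limit cycles, one-sided periodic orbits, and border separatrices lie in $\mathop{\mathrm{BD}}_1(v)$, this forces $S - \mathop{\mathrm{BD}}_1(v) \subseteq \mathrm{P}(v)$, so every connected component consists entirely of non-closed proper orbits.

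Second, among the component types offered by the general decomposition theorem, those that would contain non-closed recurrent orbits (such as components saturated by dense orbits on invariant subsurfaces) are ruled out by $\mathrm{R}(v) = \emptyset$. The only remaining possibilities are an open trivial flow box, whose orbit space is the interval (case (1)), and an invariant open annulus whose orbits are all non-closed and proper and whose orbit space is a circle (case (2)). This is precisely the dichotomy claimed.

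Third, for the ``moreover'' clause I would invoke the Poincar\'e--Bendixson-type theorem for flows with finitely many singular points (cf.\ Theorem 2.6.1 of \cite{nikolaev1999flows}) adapted to the limit-circuit setting, which asserts that for any point $x \in \mathrm{P}(v)$ the sets $\omega(x)$ and $\alpha(x)$ are singular points, periodic orbits, or limit circuits; Corollary~\ref{cor:per_lc} upgrades the periodic case to a limit cycle, and hence in all cases the limit set is a singular point or a limit circuit. The constancy of $\omega(x)$ and $\alpha(x)$ on a component then follows from the flow-box (resp.\ annular) structure: in a trivial flow box the orbits are parameterised by a transverse interval and the limit set varies locally constantly on the finite set of candidate limit sets, so by connectedness it is globally constant; in an annular component every orbit accumulates on the same boundary circuit on each side. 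The main obstacle is this last constancy statement, but since there are only finitely many candidate limit sets (finitely many singular points and finitely many limit circuits), upper semicontinuity of $\omega$ and $\alpha$ on the flow-box component reduces the argument to observing that a locally constant function on a connected space is constant.
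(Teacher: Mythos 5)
Your proposal is correct and follows essentially the same route as the paper, which likewise derives the lemma by combining \cite{yokoyama2017decompositions}*{Corollary 11.11} with the Poincar\'e--Bendixson theorem for flows with finitely many singular points; the paper gives only a one-sentence justification, and your write-up simply supplies the details (using $\mathrm{R}(v)=\emptyset$ to exclude recurrent component types and local constancy of the finitely many candidate limit sets to get the ``moreover'' clause) that the paper leaves implicit.
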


Since the complement of $\mathop{\mathrm{BD}}_1$ consists of either a disk or an annulus, the previous lemma implies Theorem~\ref{main:g}.

\section{Acknowledgements}
The first author is partially supported by Russian Science Foundation 21-11-00355 project, the  second author is partially supported by JSPS Kakenhi Grant Number 20K03583

\bibliographystyle{amsplain}
\bibliography{TCMnew.bib}

\end{document}